\documentclass[11pt]{article}
\usepackage[margin=1in]{geometry}
\usepackage{amsmath,amsthm,amssymb,amsfonts}
\usepackage{mathtools}
\usepackage{bm}
\usepackage{hyperref}
\usepackage{enumitem}
\usepackage{mathrsfs}
\usepackage[dvipsnames]{xcolor}
\usepackage{verbatim}
\usepackage{booktabs}
\usepackage{graphicx}

\newtheorem{theorem}{Theorem}[section]
\newtheorem{lemma}[theorem]{Lemma}
\newtheorem{corollary}[theorem]{Corollary}

\newtheorem{definition}[theorem]{Definition}
\newtheorem{remark}[theorem]{Remark}
\newtheorem{assumption}[theorem]{Assumption}

\newcommand{\bbE}{\mathbb{E}}
\newcommand{\bbP}{\mathbb{P}}
\newcommand{\1}{\mathbf{1}}

\title{\bf Node-Private Community Detection \\ in Stochastic Block Models}

\author{Olga Klopp$^*$\quad 
Ilias Zadik$^\dagger$\\[0.5cm]
$^*$ESSEC Business School \\
$^\dagger$ Department of Statistics and Data Science, Yale University}
\date{}

\begin{document}
\maketitle

\begin{abstract}
We study community detection in stochastic block models under pure node-level differential privacy, a stringent notion that protects the participation of an individual together with all of their incident edges. This setting is substantially more challenging than edge-private community detection, since modifying a single node can affect linearly many observations. On the algorithmic side, we analyze a node-private estimator based on the exponential mechanism combined with an extension lemma, and show that exact recovery remains achievable. In the standard sparse regime with logarithmic average degree and a fixed number of communities, our results imply that a logarithmic privacy budget suffices to obtain nontrivial recovery guarantees. On the lower bound side, we show that this logarithmic scaling is in fact unavoidable: any pure node-private method must fail to achieve polynomially small exact-recovery error, or polynomially small expected mismatch, unless the privacy budget is at least of this order. Moreover, in the regime of super-logarithmic privacy budgets, our upper and lower bounds yield a matching two-term characterization of the minimax risk, with one term governed by the non-private statistical signal and the other by the privacy budget; these match up to universal constants in the exponents. Taken together, our results identify an inherent logarithmic privacy cost in node-private community detection, absent under edge differential privacy, and provide a precise rate-level characterization of the tradeoff between node privacy and SBM recovery.

\end{abstract}

\newpage 
\tableofcontents

\newpage
\section{Introduction}
Community detection is one of the central inference problems for network data \cite{HollandLaskeyLeinhardt1983,Abbe2018JMLR}. In many applications, however, the graph itself is sensitive: an edge may reveal a friendship, contact, or transaction; the presence of a vertex may reveal participation in a sensitive activity; and the inferred community label may reveal a politically, medically, or socially sensitive attribute. These considerations lead to several privacy goals. Edge-level differential privacy (DP) protects individual relationships, node-level DP protects the participation of an individual together with all incident edges, and weaker model-aware notions may instead target the latent community labels. In this paper we study the strongest graph-level notion of privacy in this hierarchy, namely pure node-level differential privacy, in the stochastic block model (SBM), and ask what statistical price it imposes on community recovery.

The SBM is the canonical benchmark for community detection \cite{HollandLaskeyLeinhardt1983,Abbe2018JMLR}. Informally, it consists of $n$ vertices partitioned into $K$ latent communities; conditional on the labels, edges are drawn independently with higher probability within communities than across communities. In the sparse regimes of interest here, these probabilities scale as $a/n$ and $b/n$. The inferential goal is to recover the latent partition from the observed graph, up to a permutation of the community labels. Throughout the paper we evaluate estimators by two standard criteria: the expected permutation-invariant mis-match ratio and the probability of exact recovery; formal definitions are deferred to Section~\ref{sec:model}.

 \paragraph{Non-private benchmark.}
Without privacy constraints, the statistical picture for the SBM is by now well-established and quite sharp. A central signal quantity is the order-$1/2$ Rényi divergence between the Bernoulli laws for within-community and across-community edges, we denote this divergence by $I$. In the sparse regime, with  the edge probabilities $a/n$ and $b/n$ and when $a,b=o(n)$ it satisfies
\[
nI=(\sqrt a-\sqrt b)^2+o(1).
\]
In the exponential-rate regime, the minimax expected mis-match risk is governed by this signal strength: for two classes the optimal exponent is $nI/2$, while for $K\ge 3$ classes it is $nI/(\beta K)$, where $\beta\ge 1$ is the class-balance parameter controlling how unequal the community sizes are allowed to be \cite{Zhang2016Minimax}. These rates are achieved by penalized likelihood methods and by computationally efficient refinements \cite{Zhang2016Minimax, Gao2017Achieving}. In the logarithmic-degree regime, exact recovery exhibits sharp threshold behavior: in the symmetric two-class model with edge probabilities $c_{\mathrm{in}}\log n/n$ and $c_{\mathrm{out}}\log n/n$, exact recovery is possible precisely when $(\sqrt{c_{\mathrm{in}}}-\sqrt{c_{\mathrm{out}}})^2>2$. More generally, in multi-community models, exact recovery is governed by Chernoff--Hellinger criteria, see, e.g., \cite{AbbeBandeiraHall2016,abbe_sandon_general_sbm}. In constant-degree regimes exact recovery is impossible, and one instead encounters detectability thresholds \cite{bordenave_lelarge_massoulie_nbt,massoulie_weak_ramanujan,mossel_neeman_sly_block_threshold}. These results provide the non-private baseline against which we measure the cost of privacy.

\paragraph{The challenge of node privacy.}
Node privacy is substantially more stringent (and correspondingly stronger) than edge privacy. Under edge-DP, changing a single edge perturbs natural likelihood-type scores by only $O(1)$, and recent works have established strong and rigorous recovery guarantees for SBM-type models under this notion \cite{MohamedNguyenVullikantiTandon2022ICML,chen2023private,NguyenVullikanti2024ICML}. In particular, leveraging this $O(1)$ edge-sensitivity, \cite{MohamedNguyenVullikantiTandon2022ICML} showed that classical stability-based algorithms require $\varepsilon=\Omega(\log n)$, whereas more refined approaches based on the Exponential Mechanism succeed with $\varepsilon=\Omega(1)$. More recently, \cite{chen2023private} further improved these guarantees, demonstrating recovery for even smaller privacy levels, allowing $\varepsilon=o(1)$ in the super-logarithmic degree regime of the SBM.

Under node-DP, by contrast, changing a single vertex may affect all of its incident edges, and therefore the same score can vary by an amount proportional to the number of vertices in the graph, namely of order $\Omega(n)$. This stark contrast has led node-private guarantees to often be regarded in the literature as essentially “impossible” for exact recovery in the SBM; see, e.g., \cite[footnote 2]{chen2023private}. More precisely, such statements reflect the expectation that exact recovery under node-DP would at least require $\varepsilon = \Omega(1)$. Moreover, the naive $O(n)$ sensitivity of natural likelihood scores raises the concern that $\varepsilon$ may even need to scale linearly with $n$ to obtain consistent estimators. In this work, we take a step toward resolving this question by systematically characterizing the trade-off between recovery guarantees in the SBM and node-level privacy.

\paragraph{Questions addressed in this paper.}
Our goal is to understand how the pure node-DP constraint changes the classical SBM benchmark. More concretely, we ask:
\begin{quote}
	\centering
	\textit{For which privacy budgets $\varepsilon>0$ can one still achieve exact recovery?}\\
	\textit{How do the statistical signal and the privacy budget jointly determine the recovery error?}
\end{quote}
These questions are already delicate in the logarithmic-degree regime, where exact recovery is near the boundary of possibility even without privacy, and where the privacy budgets of interest are much smaller than the naive linear sensitivity scale $n$

\subsection{Main contribution}

In this work, we make two main contributions for SBM community detection: an achievability result under pure node-DP, and a converse that quantifies a genuine barrier imposed by node privacy. In particular, our results show that as long as $\varepsilon$ grows \emph{very mildly with $n$}, specifically, $\varepsilon = \Omega(\log n)$, a clean and informative trade-off emerges between node privacy and SBM recovery. For instance, a perhaps intriguing finding is that $\varepsilon = \Omega(\log n)$ is necessary to achieve polynomially strong exact recovery guarantees. This scaling of $\varepsilon$ represents a significant departure from the naive sensitivity-based intuition, which would suggest that $\varepsilon = \Omega(n)$ is required to obtain any nontrivial guarantees based on likelihood scores.

More specifically, on the achievability side, we analyze an Exponential-Mechanism estimator \cite{McSherryTalwar2007} built from the homogeneous-SBM penalized likelihood score. The argument combines three key ingredients. First, we establish a ``$\gamma$-slack'' extension of the non-private risk analysis: if a labeling is within an additive score slack $\gamma$ of the optimum, then its recovery error remains controlled, with a corresponding degradation in the exponent. Second, we show that, with high probability, the graph lies in a degree envelope under which the score has controlled node sensitivity. Third, we combine this with a Lipschitz extension argument to obtain a pure node-DP estimator defined over the full graph domain. In the standard constant-SNR, logarithmic-degree regime (see Section \ref{sec:model} for a precise definition), with fixed $K$ and balance parameter $\beta$, this yields nontrivial recovery guarantees once the privacy budget is sufficiently large relative to the logarithmic sensitivity scale, i.e., $\varepsilon=\Omega(\log n).$

On the converse side, we offer a  two-point lower bound showing that any pure $\varepsilon$-node-DP mechanism has exact-recovery failure probability at least
\[
\frac{1}{1+e^{2\varepsilon}},
\]
and expected mis-match at least
\[
\frac{1}{n(1+e^{2\varepsilon})}.
\]
In particular, these bounds show that obtaining polynomially small exact-recovery failure probability or polynomially small expected mis-match already \emph{requires} $\varepsilon=\Omega(\log n)$. Therefore, the logarithmic privacy scale in our upper bound is not an artifact of the analysis; it is fundamentally imposed by node-level privacy itself.

In the regime $\varepsilon \gg \log n$ the picture becomes even cleaner.  Focusing again on the common constant SNR regime, the error of our algorithm for the upper bound decomposes in the following form
\[
\exp\{-c_1\,\mathrm{Signal}\}+\frac{1}{n}\exp\{-c_2\varepsilon\},
\]
while the lower bound has the form
\[
\exp\{-(1+o(1))\mathrm{Signal}\}+\frac{1}{n(1+e^{2\varepsilon})}.
\]
Therefore, in this regime $\varepsilon \gg \log n$, the minimax risk admits a simplified two-term structure, matching up to universal constants in the exponents: the first term governs the non-private rate, in agreement with \cite{Zhang2016Minimax}, while the second captures the statistical “price” of privacy as a function of $\varepsilon$. In particular, our bounds indicate that once $\varepsilon$ exceeds the intrinsic signal scale, the privacy term is no longer the statistical bottleneck: the private and non-private risks coincide at leading order. Thus, in that regime, privacy entails essentially no additional statistical cost. Under the standard sparse constant-SNR SBM assumptions, this signal scale is comparable to the expected degree and is often polylogarithmic in $n$.
\paragraph{Paper organization.}
The rest of the paper is organized as follows. Section~\ref{sec:model} introduces the formal SBM model, the mis-match loss, and the privacy definitions. Section~\ref{sec:exponential_mech} develops the node-private Exponential-Mechanism estimator and proves the upper bound. Section~\ref{sec:lower_bound} contains the lower bounds. The appendices collect proofs and auxiliary combinatorial arguments.

\paragraph{Notation.}
For an integer $m\ge 1$, we write $[m]:=\{1,\dots,m\}$, and $S_K$ for the permutation group on $[K]$.
We denote by $\Sigma_\beta$ the set of $\beta$-balanced labelings $\sigma:[n]\to[K]$, and by
$\Theta(n,K,a,b,\beta)$ the corresponding homogeneous SBM parameter space.
For a graph with adjacency matrix $A$, we write $\deg_A(i)$ for the degree of vertex $i$ and
$d_{\max}(A):=\max_{i\in[n]}\deg_A(i)$.

For nonnegative quantities $x$ and $y$ (possibly depending on $n$), the notation
$x\lesssim y$ means that $x\le Cy$ for some absolute constant $C>0$. Similarly, $ x \gtrsim y $ means $ y \lesssim x$ for all sufficiently large $n$ and $x\asymp y$ means both $x\lesssim y$ and $x\gtrsim y$.
We write $x\sim y$ to mean $x/y\to 1$ as $n\to\infty$.
Also, $O(\cdot)$, $\Omega(\cdot)$, $\Theta(\cdot)$ and $o(\cdot)$ are used in their standard asymptotic senses.
All logarithms are natural unless explicitly stated otherwise.

The symbol $\sim$ is also used in two standard context-dependent ways:
$A\sim \mathrm{SBM}(\sigma_0)$ means that $A$ is distributed according to the SBM with ground-truth labeling $\sigma_0$,
while $A\sim_e A'$ and $A\sim_v A'$ denote edge-adjacency and node-adjacency, respectively.

\section{Getting started: Community detection under privacy}\label{sec:setting}


\subsection{Model and accuracy}\label{sec:model}

\paragraph{SBM and parameter space.}We now formally define the stochastic block model (SBM) distribution. Without loss of generality, we assume SBM is supported on zero-diagonal, symmetric \(A\in\{0,1\}^{n\times n}\) which is the adjacency matrices of simple undirected graphs on \([n]\).

Fix integers \(n\ge 2\), \(K\ge 2\), and a balance parameter \(\beta\ge 1\). If \(K\ge 3\), also assume \(1\le \beta<\sqrt{5/3}\). Let
\[
\Sigma_\beta
:=
\Bigl\{
\sigma:[n]\to[K]:
\frac{n}{\beta K}\le |\{i:\sigma(i)=k\}| \le \frac{\beta n}{K}
\ \text{for all } k\in[K]
\Bigr\}.
\]

We now consider a ``ground-truth" labeling \(\sigma_0\) which belongs to \(\Sigma_\beta\). Conditional on \(\sigma_0\), the upper-triangular entries of \(A\) are independent and
\[
\bbP(A_{ij}=1\mid \sigma_0)=
\begin{cases}
a/n, & \sigma_0(i)=\sigma_0(j),\\
b/n, & \sigma_0(i)\neq \sigma_0(j),
\end{cases}
\qquad 1\le i<j\le n,
\]
with \(A_{ii}=0\), \(A_{ij}=A_{ji}\), and \(a>b\ge 0\).  We consider constant-SNR regime, that is we assume 
there exist constants \(0<\rho_- \le \rho_+<1\) such that
\[
\rho_- \le \frac{b}{a} \le \rho_+
\]
for all sufficiently large \(n\).
Equivalently, \(a-b=\Theta(b)\).
We write \(\Theta(n,K,a,b,\beta)\) for this homogeneous \(K\)-class \(\beta\)-balanced SBM family.
\paragraph{Accuracy.}
For labelings \(\sigma,\sigma':[n]\to[K]\), let
\[
d_H(\sigma,\sigma') := |\{i:\sigma(i)\neq \sigma'(i)\}|
\]
denote the Hamming distance. We measure recovery error by the permutation-invariant mis-match ratio
\[\label{eq:dfn_mismatch}
r(\sigma,\hat\sigma)
:=
\min_{\pi\in S_K}\frac1n\sum_{i=1}^n \1\{\sigma(i)\neq \pi(\hat\sigma(i))\}.
\]
This is the standard loss for SBM community detection. In particular, since \(nr(\sigma_0,\hat\sigma)\in\{0,1,\dots,n\}\),
\[
\bbP\bigl(r(\sigma_0,\hat\sigma)\neq 0\bigr)\le n\,\bbE\,r(\sigma_0,\hat\sigma),
\]
so an \(o(1/n)\) bound on expected mis-match implies vanishing exact-recovery failure probability.
\paragraph{Terminology.}
The two formal accuracy criteria in this paper are the expected permutation-invariant
mis-match ratio $\bbE r(\sigma_0,\hat\sigma)$ and the exact-recovery failure probability
$\Pr(r(\sigma_0,\hat\sigma)>0)$. To keep the discussion consistent, we will use the
following terminology throughout.
We say that an estimator achieves \emph{nontrivial recovery} if
\[
\sup_{\Theta(n,K,a,b,\beta)} \bbE r(\sigma_0,\hat\sigma)=o(1).
\]

We use \emph{weak recovery} as a synonym for nontrivial recovery, i.e. for the same
$o(1)$ vanishing of the expected mis-match ratio.
We say that an estimator achieves \emph{exact recovery} if
\[
\sup_{\Theta(n,K,a,b,\beta)}
\Pr\!\bigl(r(\sigma_0,\hat\sigma)>0\bigr)=o(1),
\]
equivalently, if it recovers the true labeling exactly (up to permutation of community labels)
with probability tending to one.

When a quantitative version is needed, we will say that an estimator achieves
\emph{polynomially-strong exact recovery} if there exists a constant $c>0$ such that
\[
\sup_{\Theta(n,K,a,b,\beta)}
\Pr\!\bigl(r(\sigma_0,\hat\sigma)>0\bigr)\le n^{-c}.
\]
By the same application of Markov's inequality as above, the stronger bound
\[
\sup_{\Theta(n,K,a,b,\beta)} \bbE r(\sigma_0,\hat\sigma)\le n^{-(1+c)}
\]
implies polynomially strong exact recovery.
\paragraph{Information quantity and non-private benchmark.}
Let \(I\) denote the order-\(\frac12\) R\'enyi divergence between \(\mathrm{Ber}(a/n)\) and \(\mathrm{Ber}(b/n)\):
\begin{equation}\label{eq:renyi}
I
=
-2\log\!\left(
\sqrt{\frac an\frac bn}
+
\sqrt{\left(1-\frac an\right)\left(1-\frac bn\right)}
\right).
\end{equation}
In the sparse regime \(a/n,b/n\to 0\), one has \(nI=(\sqrt a-\sqrt b)^2+o(1)\). A central benchmark for this paper is the non-private minimax theory of Zhang and Zhou~\cite{Zhang2016Minimax}, which shows that in the exponential-rate regime
\[
\inf_{\hat\sigma}\sup_{\Theta(n,K,a,b,\beta)} \bbE\,r(\sigma_0,\hat\sigma)
=
\begin{cases}
\exp\{-(1+o(1))\,nI/2\}, & K=2,\\[2mm]
\exp\{-(1+o(1))\,nI/(\beta K)\}, & K\ge 3,
\end{cases}
\]
under mild growth conditions on \(K\). Moreover, as mentioned in the introduction, this rate is attained by a regularized likelihood / homogeneous-SBM MLE procedure.


\subsection{Edge- and node-level differential privacy}

We consider two standard graph-level notions of differential privacy.

\begin{definition}[Edge-level differential privacy]
Two adjacency matrices \(A,A'\) on \([n]\) are \emph{edge-adjacent}, written \(A\sim_{\mathrm e} A'\), if they differ in exactly one unordered pair \(\{i,j\}\), \(1\le i<j\le n\). A mechanism \(M\) is \emph{\(\varepsilon\)-edge DP (edge-level)} if for all measurable sets \(S\) and all \(A\sim_{\mathrm e}A'\),
\[
\bbP(M(A)\in S)\le e^\varepsilon \bbP(M(A')\in S).
\]
\end{definition}

Edge-DP ``protects" relationships between individuals. Because changing one edge perturbs natural likelihood-type scores by \(O(1)\), it is often compatible with sparse graphs.

\begin{definition}[Node-level differential privacy]\label{def:node_privacy}
Two adjacency matrices \(A,A'\) on \([n]\) are \emph{node-adjacent}, written \(A\sim_v A'\), if they differ only in the incident edges of a single vertex \(v\in[n]\). A mechanism \(M\) is \emph{\(\varepsilon\)-node DP (node-level)} if for all measurable sets \(S\) and all \(A\sim_v A'\),
\[
\bbP(M(A)\in S)\le e^\varepsilon \bbP(M(A')\in S).
\]
\end{definition}
Node-DP ``protects" the participation of a vertex together with all incident edges and is therefore strictly stronger than edge-DP. 
In the SBM, changing one node can alter \(n-1\) edges, so in log-degree regimes the sensitivity of natural graph statistics typically grows like \(\Theta(n)\). This sensitivity growth is the main obstacle in the private analysis, and it motivates the degree-envelope and extension arguments used in Section~\ref{sec:exponential_mech}; see~\cite{KasiviswanathanNissimRaskhodnikovaSmith2013TCC,RaskhodnikovaSmith2016FOCS}. 
\paragraph{Node distance and sensitivity.}
The node-adjacency relation from Definition~\ref{def:node_privacy} induces a natural metric on the space $\mathcal G$
of graphs on the vertex set $[n]$. For $A,A' \in \mathcal G$, define the \emph{node distance}
\[
d_v(A,A')
:=
\min\Bigl\{
m\ge 0 :
\exists A^{(0)},\dots,A^{(m)} \in \mathcal G
\text{ s.t. }
A^{(0)}=A,\ A^{(m)}=A',
\text{ and }
A^{(t-1)} \sim_v A^{(t)}
\ \forall t\in[m]
\Bigr\}.
\]
Thus $d_v(A,A')$ is the minimum number of vertex-neighborhood rewirings needed to transform
$A$ into $A'$. In particular, $d_v(A,A')=1$ if and only if $A \sim_v A'$.

For a function $f:\mathcal G \to \mathbb R^d$, its \emph{node-sensitivity} is
\[
\Delta_v(f)
:=
\sup_{A \sim_v A'} \|f(A)-f(A')\|,
\]
where $\|\cdot\|$ denotes the relevant norm (absolute value in the real-valued case).
More generally, for a subset $\mathcal H \subseteq \mathcal G$, the \emph{restricted node-sensitivity}
of $f$ on $\mathcal H$ is
\[
\Delta_v(f;\mathcal H)
:=
\sup_{\substack{A,A' \in \mathcal H\\ A \sim_v A'}}
\|f(A)-f(A')\|.
\]

\subsection{Exponential Mechanism}\label{sec:EM}

The Exponential Mechanism (EM), introduced by McSherry and Talwar \cite{McSherryTalwar2007},
 is a standard differentially private procedure for selecting an output from a discrete or combinatorial candidate set when one assigns to each candidate a utility score. We describe it here in the node-DP setting. 

Given a dataset $A$, an output space $\mathcal{Y}$, and a utility function $u_A:\mathcal{Y}\to\mathbb{R}$, EM samples $y\in\mathcal{Y}$ with probability proportional to $\exp\{\varepsilon u_A(y)/(2\Delta)\}$, where 
$\Delta := \Delta_v(u)$ is the node-sensitivity of the utility under the node-adjacency relation, defined by \[
\Delta_v(u)
:=
\sup_{A \sim_v A'} \sup_{y\in\mathcal Y}
|u_A(y)-u_{A'}(y)|.
\]
 Thus higher-utility outputs are exponentially favored, while the normalization by $\Delta$ ensures the $\varepsilon$-DP constraint under the used neighborhood definition $A\sim_v A'$.

A standard consequence is that candidates whose utility is worse than the optimum by $s$ are downweighted by a factor $e^{-\varepsilon s/(2\Delta)}$, so the mechanism tends to return a near-optimizer whenever the near-optimal set is not too large.  
\subsection{Extension from a restricted domain}

A useful device in node-private graph problems is to first construct a private mechanism on a
high-probability subset of the input space where the relevant score has controlled sensitivity, and
then extend this mechanism to the full graph domain. The following fully general result of Borgs,
Chayes, Smith, and Zadik \cite{borgs_revealing_2018,borgs2018privatealgorithmsextended} gives exactly such an extension. We state it here in the present graph
setting.

Because the lemma is formulated on a metric space, we use the following metric form of privacy:
for a subset $H \subseteq \mathcal G$, a mechanism $\mathcal M : H \to \mathcal S$ is
$\varepsilon$-DP on $(H,d_v)$ if for all $A,A' \in H$ and all measurable $E \subseteq \mathcal S$,
\[
\Pr(\mathcal M(A)\in E)
\le
e^{\varepsilon d_v(A,A')}
\Pr(\mathcal M(A')\in E).
\]

\begin{lemma}[{$2\varepsilon$}-extension, \cite{borgs_revealing_2018,borgs2018privatealgorithmsextended}]
\label{prop:extension}
Let $(\mathcal G,d_v)$ be the metric space of simple undirected graphs on $[n]$ endowed with the
node distance, let $H \subseteq \mathcal G$, and let $\mathcal S$ be a measurable output space.
Suppose $\mathcal M_0 : H \to \mathcal S$ is $\varepsilon$-DP on $(H,d_v)$. Then there exists a
mechanism $\widetilde{\mathcal M} : \mathcal G \to \mathcal S$ that is $2\varepsilon$-DP on
$(\mathcal G,d_v)$ and satisfies
\[
\widetilde{\mathcal M}(A)\stackrel{d}{=} \mathcal M_0(A),
\qquad A\in H.
\]
\end{lemma}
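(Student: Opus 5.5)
The plan is to follow the Borgs--Chayes--Smith--Zadik construction, which extends the log-density of the output distribution rather than the mechanism itself. First I would reduce to the case where $\mathcal S$ carries a common dominating measure. Fix a reference point $A_0\in H$ and let $\mu$ be the law of $\mathcal M_0(A_0)$. For every $A\in H$, the DP inequality with $E$ ranging over measurable sets gives
\[
e^{-\varepsilon d_v(A,A_0)}\mu\le \Pr(\mathcal M_0(A)\in\cdot)\le e^{\varepsilon d_v(A,A_0)}\mu .
\]
Hence each law is absolutely continuous with respect to $\mu$, and we can write its density as $p_A=e^{f_A}$. The DP condition on $(H,d_v)$ then says that for $\mu$-a.e.\ $s$, the map $A\mapsto f_A(s)$ is $\varepsilon$-Lipschitz on $(H,d_v)$. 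There is a null-set subtlety here: $H$ is finite, since $\mathcal G$ is finite, so we can take a common $\mu$-null exceptional set over all pairs. Choosing versions of the densities that satisfy the pointwise inequalities everywhere is then routine.

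Next, extend pointwise in $s$ by a McShane-type formula:
\[
g_A(s):=\min_{B\in H}\bigl(f_B(s)+\varepsilon d_v(A,B)\bigr),\qquad A\in\mathcal G .
\]
This is $\varepsilon$-Lipschitz in $A$ on all of $\mathcal G$, and it agrees with $f_A(s)$ for $A\in H$, because the Lipschitz property on $H$ makes the minimum attained at $B=A$. Measurability in $s$ holds because this is a finite minimum of measurable functions. Also $g_A\le f_{B}+\varepsilon d_v(A,B)$ for any fixed $B$, so $\int e^{g_A}\,d\mu\le e^{\varepsilon d_v(A,B)}<\infty$. In the other direction, $g_A\ge \min_B f_B - \varepsilon\,\mathrm{diam}$, and each $f_B\ge -\varepsilon d_v(B,A_0)$ is bounded below, so $Z_A:=\int e^{g_A}d\mu\in(0,\infty)$. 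Define $\widetilde{\mathcal M}(A)$ to have density $e^{g_A}/Z_A$ with respect to $\mu$. For $A\in H$ we have $Z_A=1$, so $\widetilde{\mathcal M}(A)\stackrel d=\mathcal M_0(A)$.

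Finally, the privacy check. For any $A,A'\in\mathcal G$, the Lipschitz property gives $e^{g_A}\le e^{\varepsilon d_v(A,A')}e^{g_{A'}}$ pointwise. Integrating over $\mathcal S$ gives $Z_A\le e^{\varepsilon d_v}Z_{A'}$, and symmetrically $Z_{A'}\le e^{\varepsilon d_v}Z_A$. So for any measurable $E$,
\[
\Pr(\widetilde{\mathcal M}(A)\in E)=\frac{\int_E e^{g_A}d\mu}{Z_A}\le \frac{e^{\varepsilon d_v}\int_E e^{g_{A'}}d\mu}{e^{-\varepsilon d_v}Z_{A'}}=e^{2\varepsilon d_v(A,A')}\Pr(\widetilde{\mathcal M}(A')\in E).
\]
This is exactly $2\varepsilon$-DP on $(\mathcal G,d_v)$; the factor $2$ comes from renormalization.

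The main obstacle I expect is the measure-theoretic step of obtaining a single dominating measure together with density versions that satisfy the Lipschitz inequalities pointwise. Finiteness of $H$, combined with Radon--Nikodym and the bounds $e^{\pm\varepsilon d}$, should handle it. The case $H=\emptyset$ is trivial: take any fixed distribution.
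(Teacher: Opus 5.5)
Your proof is correct. Finiteness of $\mathcal G$ makes it harmless to choose density versions valid everywhere, the formula $g_A=\min_{B\in H}\bigl(f_B+\varepsilon d_v(A,B)\bigr)$ gives an $\varepsilon$-Lipschitz extension that agrees with $f_A$ on $H$, and renormalizing by $Z_A$ costs exactly the second factor $e^{\varepsilon d_v(A,A')}$. The paper does not prove this lemma itself but imports it from Borgs, Chayes, Smith and Zadik, whose argument is essentially this same construction: take densities relative to the law of $\mathcal M_0(A_0)$, extend them by $\inf_{B\in H}e^{\varepsilon d_v(A,B)}p_B$, and normalize.
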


In our upper-bound construction, Lemma \ref{prop:extension} is used  as a final transfer step.
The main work is to show that, on a high-probability degree envelope, the penalized SBM score is
Lipschitz with respect to $d_v$, so that the corresponding Exponential Mechanism is private on
that restricted domain. We then apply Lemma \ref{prop:extension} with $H=\mathcal G_C$ to obtain
a full-domain node-private estimator.

\section{The upper bound: a node-private estimator via the Exponential Mechanism}\label{sec:exponential_mech}

We now state the node-private estimator and its performance guarantee. The estimator is obtained by applying the Exponential Mechanism (EM) to the homogeneous-SBM penalized likelihood score on a high-probability bounded-degree envelope, and then extending the resulting restricted-domain mechanism to the full graph domain.

\subsection{The estimator}\label{sec:estimator}
We take as candidate set the balanced labelings $\Sigma_\beta$, and as utility the penalized score $u_A(\sigma)=T_A(\sigma)$, introduced in \cite{Zhang2016Minimax} and defined for each labeling $\sigma:[n]\to[K]$ by
\begin{equation}\label{eq:T}
	T_A(\sigma) = \sum_{i<j} A_{ij}\,\1\{\sigma(i)=\sigma(j)\}
	\;-
	\lambda \sum_{i<j} \1\{\sigma(i)=\sigma(j)\}.
\end{equation}
Equivalently,
\[
T_A(\sigma)=\sum_{i<j}(A_{ij}-\lambda)\,\1\{\sigma(i)=\sigma(j)\}.
\]
Thus $T_A(\sigma)$ rewards observed within-community edges and penalizes excessively large within-community blocks.
As in \cite{Zhang2016Minimax}, we choose $\lambda$ through the Chernoff tilt
\begin{equation}\label{eq:tstar}
	t^\star = \frac12 \log\!\left(\frac{a(1-b/n)}{b(1-a/n)}\right).
\end{equation}
Specifically, for $K=2$, we set
\begin{equation}\label{eq:lambdaK2}
	\lambda = -\frac{1}{2t^\star}\log\!\Big(\frac{a}{n}e^{-t^\star}+1-\frac{a}{n}\Big)
	+\frac{1}{2t^\star}\log\!\Big(\frac{b}{n}e^{t^\star}+1-\frac{b}{n}\Big).
\end{equation}
and for $K\ge 3$, we fix any $w\in[0,1]$ and set
\begin{equation}\label{eq:lambdaKge3}
	\lambda = -\frac{w}{t^\star}\log\!\Big(\frac{a}{n}e^{-t^\star}+1-\frac{a}{n}\Big)
	+\frac{1-w}{t^\star}\log\!\Big(\frac{b}{n}e^{t^\star}+1-\frac{b}{n}\Big).
\end{equation}
\begin{remark}
With this choice, likelihood comparisons admit the sharp Chernoff bounds governed by the order-$\frac12$ R\'enyi divergence $I$. Note that under  constant-SNR regime and \(a,b=o(n)\), one has 
\[
t^\star=\Theta(1), \qquad nI\asymp a,
\]
with constants depending only on \(\rho_-,\rho_+\).
\end{remark}

A key challenge for node-DP is the large worst-case node-sensitivity of $T_A(\sigma)$ on the full
graph space: rewiring one node can change linearly many edge terms. To avoid calibrating the
Exponential Mechanism to this global worst case, we first restrict attention to a high-probability
degree envelope on which the score is Lipschitz with respect to the node distance. We then apply
Lemma \ref{prop:extension}  in order to extend the resulting restricted-domain
private mechanism to all graphs.
 For a constant $C>0$, we define the degree envelope
\begin{equation}\label{eq:Ec}
	\mathcal G_C
	:=
	\Big\{A:\ d_{\max}(A):=\max_i \deg_A(i)\le C\max\{a,\log n\}\Big\},
\end{equation}
and set
\begin{equation}\label{eq:Delta}
	\Delta_a := 2C\max\{a,\log n\}.
\end{equation}
By Lemma~\ref{lem:degree-envelope}, we prove that for some $C>0$, the SBM instance lies in $\mathcal G_C$ with high probability, and by Lemma~\ref{lem:sensitivity}, the restricted node-sensitivity of $T_A$ on $\mathcal G_C$ is at most $\Delta_a$. Observe that $\Delta_a$ can be, in principle, much smaller than $\Theta(n)$.

\paragraph{The node-DP estimator}
We are now in a position to describe our $\varepsilon$-node-DP estimator defined with input an $n$-node undirected graph $A$. The estimator is constructed in two steps.

First, we define the estimator when $A \in \mathcal G_C$ where $C>0$ is a constant defined in Lemma~\ref{lem:degree-envelope}. In that case we run the EM with utility $T_A$, sensitivity $\Delta_{a}$ and $\varepsilon_0: =\varepsilon/2$, i.e., output the labeling based on
\begin{equation}\label{eq:em}
	\mathbb P(\hat\sigma=\sigma\mid A)
	=
	\frac{
		\exp\!\left(\frac{\varepsilon_0}{2\Delta_a}T_A(\sigma)\right)\mathbf 1\{\sigma\in\Sigma_\beta\}
	}{
		\sum_{\tau\in\Sigma_\beta}
		\exp\!\left(\frac{\varepsilon_0}{2\Delta_a}T_A(\tau)\right)
	}.
\end{equation}
Second, we extend the estimator from $\mathcal G_C$ to the whole space of $n$-vertex undirected graphs, using the Lipschitz extension technique described in Lemma \ref{prop:extension}.
\begin{lemma}[Privacy of the restricted-domain and full-domain estimators]
\label{lem:em-private}
Set $\varepsilon_0:=\varepsilon/2$. Define the restricted-domain mechanism
$\hat\sigma:\mathcal G_C\to\Sigma_\beta$ by \eqref{eq:em}. Then $\hat\sigma$ is
$\varepsilon_0$-DP on $(\mathcal G_C,d_v)$. Consequently, by Lemma
\ref{prop:extension}, there exists a mechanism
\[
\hat\sigma^f:\mathcal G\to\Sigma_\beta
\]
that is $\varepsilon$-node-DP on $\mathcal G$ and satisfies
\[
\hat\sigma^f(A)\stackrel{d}{=}\hat\sigma(A),
\qquad A\in\mathcal G_C.
\]
\end{lemma}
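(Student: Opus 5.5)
The argument has two steps. First we show that the restricted mechanism is $\varepsilon_0$-DP with respect to node-adjacent pairs inside $\mathcal G_C$. Then we upgrade this to the metric form required by Lemma~\ref{prop:extension}, and apply that lemma.

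\emph{Step 1 (adjacent pairs in $\mathcal G_C$).} Fix $A,A'\in\mathcal G_C$ with $A\sim_v A'$. By Lemma~\ref{lem:sensitivity}, $|T_A(\sigma)-T_{A'}(\sigma)|\le\Delta_a$ for every $\sigma\in\Sigma_\beta$. It helps to see why this holds. The penalty term $\lambda\sum_{i<j}\1\{\sigma(i)=\sigma(j)\}$ does not depend on $A$. The edge term changes only on pairs $\{v,j\}$, and the change is at most $\deg_A(v)+\deg_{A'}(v)\le 2C\max\{a,\log n\}$. Plugging this into the standard Exponential Mechanism calculation on \eqref{eq:em} gives a factor $e^{\varepsilon_0/2}$ from the numerator and $e^{\varepsilon_0/2}$ from the normalizer. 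Hence
\[
\Pr(\hat\sigma(A)=\sigma)\le e^{\varepsilon_0}\Pr(\hat\sigma(A')=\sigma)
\]
for every $\sigma$, and summing over $\sigma\in E$ gives the bound for every set $E\subseteq\Sigma_\beta$.

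\emph{Step 2 (metric DP on $(\mathcal G_C,d_v)$).} This is the main obstacle. Lemma~\ref{prop:extension} requires $\Pr(\hat\sigma(A)\in E)\le e^{\varepsilon_0 d_v(A,A')}\Pr(\hat\sigma(A')\in E)$ for all $A,A'\in\mathcal G_C$. A shortest path between them in $\mathcal G$ may leave $\mathcal G_C$, so we cannot simply chain Step 1 along it. Instead we bound $T$ directly. Let $m=d_v(A,A')$. Then $A$ and $A'$ agree outside the pairs incident to some set $S$ of $m$ vertices, since each rewiring step only changes pairs at one vertex. So
\[
|T_A(\sigma)-T_{A'}(\sigma)|\le\sum_{\{i,j\}:\,i\in S}|A_{ij}-A'_{ij}|\le\sum_{v\in S}\bigl(\deg_A(v)+\deg_{A'}(v)\bigr)\le m\Delta_a,
\]
using the degree bounds for $A,A'\in\mathcal G_C$; pairs with both endpoints in $S$ are counted at most once on the left and are covered by the right-hand sum. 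The same Exponential Mechanism calculation as in Step 1 then gives the factor $e^{\varepsilon_0 m}$. No intermediate graphs are needed, so the path-leaving issue never arises.

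\emph{Step 3 (extension).} Apply Lemma~\ref{prop:extension} with $H=\mathcal G_C$, $\mathcal S=\Sigma_\beta$ and $\mathcal M_0=\hat\sigma$. This yields $\hat\sigma^f$, which is $2\varepsilon_0=\varepsilon$-DP on $(\mathcal G,d_v)$ and agrees in distribution with $\hat\sigma$ on $\mathcal G_C$. Restricting to $d_v(A,A')=1$, i.e.\ $A\sim_v A'$, recovers Definition~\ref{def:node_privacy}, so $\hat\sigma^f$ is $\varepsilon$-node-DP.
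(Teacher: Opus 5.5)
Your proposal is correct and follows the paper's proof. The paper cites Lemma~\ref{lem:sensitivity}, whose proof is exactly your Step~2 bound $|T_A(\sigma)-T_{A'}(\sigma)|\le \Delta_a\, d_v(A,A')$ via a vertex set of size $d_v(A,A')$ and no intermediate graphs; it then runs the same Exponential Mechanism ratio computation and applies Lemma~\ref{prop:extension} with $H=\mathcal G_C$. Your Step~1 is subsumed by Step~2 and can be dropped.
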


\begin{proof}
By Lemma \ref{lem:sensitivity}, for all $A,A'\in\mathcal G_C$ and all
$\sigma\in\Sigma_\beta$,
\[
|T_A(\sigma)-T_{A'}(\sigma)|\le \Delta_a d_v(A,A').
\]
Hence
\[
\exp\!\Bigl(-\frac{\varepsilon_0}{2}d_v(A,A')\Bigr)
\le
\exp\!\Bigl(\frac{\varepsilon_0}{2\Delta_a}(T_A(\sigma)-T_{A'}(\sigma))\Bigr)
\le
\exp\!\Bigl(\frac{\varepsilon_0}{2}d_v(A,A')\Bigr).
\]
Let
\[
Z_A:=\sum_{\tau\in\Sigma_\beta}
\exp\!\Bigl(\frac{\varepsilon_0}{2\Delta_a}T_A(\tau)\Bigr).
\]
Applying the previous bound termwise and summing over $\tau$ yields
\[
e^{-\varepsilon_0 d_v(A,A')/2}Z_{A'}
\le
Z_A
\le
e^{\varepsilon_0 d_v(A,A')/2}Z_{A'}.
\]
Therefore, for every measurable $S\subseteq\Sigma_\beta$,
\[
\Pr(\hat\sigma(A)\in S)
=
\sum_{\sigma\in S}
\frac{\exp\!\bigl(\frac{\varepsilon_0}{2\Delta_a}T_A(\sigma)\bigr)}{Z_A}
\le
e^{\varepsilon_0 d_v(A,A')}
\sum_{\sigma\in S}
\frac{\exp\!\bigl(\frac{\varepsilon_0}{2\Delta_a}T_{A'}(\sigma)\bigr)}{Z_{A'}}
=
e^{\varepsilon_0 d_v(A,A')}
\Pr(\hat\sigma(A')\in S).
\]
Thus $\hat\sigma$ is $\varepsilon_0$-DP on $(\mathcal G_C,d_v)$. This is the standard privacy
guarantee of the Exponential Mechanism; see McSherry and Talwar~\cite{McSherryTalwar2007} and, for textbook background, Dwork and Roth \cite{DworkRoth2014}.

The second claim follows directly from Lemma \ref{prop:extension} applied with
$H=\mathcal G_C$.
\end{proof}

\subsection{The main theorem: the rate of the estimator}
We now present our main result.
  Let
	\[
	\mathrm{Signal}:=
	\begin{cases}
		nI/2,& K=2,\\[1mm]
		nI/(\beta K),& K\ge 3,
	\end{cases}
	\]

We first describe the parameter assumptions our result needs.
\begin{assumption}[Signal--entropy separation]\label{ass:strong}
	There exists a  sufficiently large absolute constant $C_s>0$ such that for all sufficiently large $n$, 
    \begin{equation}\label{eq:signal-entropy}
\mathrm{Signal} \ge C_s \log(nK).
	\end{equation}
\end{assumption}
Assumption~\ref{ass:strong} says that the SBM signal must be strong enough to dominate the combinatorial entropy of incorrect labelings. In the sparse constant-SNR regime with fixed $K$ and $\beta$, this is essentially the logarithmic-degree condition $a \gtrsim \log n$.

\begin{assumption}[Mild growth of $K$]\label{ass:mildK}
	There exists an absolute constant $C_{\mathrm{mg}}>0$ such that, for all sufficiently large $n$,
	\[
	\log(nK)\ \ge\ C_{\mathrm{mg}}\, K\log K.
	\]
	Equivalently,
	\[
	 K \ \le\ C_{\mathrm{mg}}^{-1}\,\frac{\log(nK)}{\log K}.
	\]
\end{assumption}
Assumption~\ref{ass:mildK} says that the number of communities may grow with $n$, but not so fast that the combinatorial overhead in $K$ overwhelms the exponential-rate analysis. This is essentially $K\log K \lesssim \log n$, i.e. $K$ is at most nearly logarithmic in $n$ up to a $\log\log n$ factor.

We can now state the main result of the section. In the statement we use the notation introduced in Section \ref{sec:estimator}.

\begin{theorem}[$\varepsilon$-node-DP risk bound]\label{thm:private_rate}
	Assume our parameters satisfy Assumptions~\ref{ass:strong} and~\ref{ass:mildK}. Let $A\mapsto \hat\sigma_f$ be the $\varepsilon$-node-DP algorithm defined in Section \ref{sec:estimator}. 
    Let
	\[
	B:=C_0\frac{K\log(nK)}{nI},
	\qquad
	\eta:=\frac{\varepsilon}{4\Delta_a},
	\qquad
	\gamma_0:=\eta-B.
	\]

	
	 There exist absolute constants $c_0,c_1,C_0,C_1>0$ such that the following holds.\\ 
	Assume $\gamma_0>0$ or equivalently \begin{align}\label{eq:feasib}
\frac{\varepsilon}{4\Delta_a} \geq C_0\frac{K\log(nK)}{nI}.
	\end{align} Then for every $0<\alpha<\frac12$, if we set
	\begin{equation}\label{eq:sstar}
		s^\star=s^\star(\alpha):=\frac{C_1\log(nK)+\log(4/\alpha)}{\gamma_0}
	\end{equation}
	then it holds
	\[
	\mathbb E\,r(\sigma_0,\hat\sigma_f)
	\le
	\exp\!\Big\{-(1+o(1))\,\mathrm{Signal}+t^\star s^\star\Big\}
	+\alpha+e^{-c_0 \mathrm{Signal}}+e^{-c_1(a+\log n)}.
	\]
	Moreover, if $\eta\ge 2B$ we choose for any fixed $c_3>0,$
	\[
	\alpha:=\frac{1}{nK}e^{-c_3\varepsilon/2}
	\]
	in which case
	\[
	s^\star \lesssim \Delta_a\,\frac{\log(nK)}{\varepsilon}+\Delta_a,
	\]
	and hence 
   
	\begin{equation}\label{eq:cor_simplified}
		\mathbb E\,r(\sigma_0,\hat\sigma_f)
		\le
		\exp\!\Big\{-(1+o(1))\,\mathrm{Signal}
		+\mathcal O\!\Bigl( \Delta_a \frac{\log(nK)}{\varepsilon}\Bigr)
		+\mathcal O(\Delta_a)\Big\}
		+\frac{1}{nK}e^{-\Omega(\varepsilon)}
		+e^{-c_0 \mathrm{Signal}}
		+e^{-c_1(a+\log n)}.
	\end{equation}
\end{theorem}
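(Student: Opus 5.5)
We would split the expected mis-match according to the event $\{A\in\mathcal G_C\}$. By Lemma~\ref{lem:degree-envelope}, $\Pr(A\notin\mathcal G_C)\le e^{-c_1(a+\log n)}$, and since $r\le 1$ this event contributes at most that term. On $\{A\in\mathcal G_C\}$, Lemma~\ref{lem:em-private} gives $\hat\sigma_f(A)\stackrel{d}{=}\hat\sigma(A)$, the restricted exponential mechanism \eqref{eq:em} with inverse temperature $\varepsilon_0/(2\Delta_a)=\varepsilon/(4\Delta_a)=\eta$. So it suffices to bound $\bbE[r(\sigma_0,\hat\sigma)\1\{A\in\mathcal G_C\}]$, where $\hat\sigma$ samples $\sigma\propto e^{\eta T_A(\sigma)}$ on $\Sigma_\beta$.

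\textbf{Step 1 (EM concentrates on near-optimizers).} Let $\gamma$-slack mean $T_A(\sigma)\ge \max_{\tau\in\Sigma_\beta}T_A(\tau)-\gamma$. The standard EM tail bound gives
\[
\Pr\bigl(T_A(\hat\sigma)<\max T_A-s\mid A\bigr)\le |\Sigma_\beta|e^{-\eta s}.
\]
This is too crude, because $|\Sigma_\beta|=K^n$. Instead we stratify by the Hamming distance $m=n\,r(\sigma_0,\sigma)$ and use a peeling argument on the good event of the non-private analysis of \cite{Zhang2016Minimax}. That event holds with probability $1-e^{-c_0\mathrm{Signal}}$, which produces the $e^{-c_0\mathrm{Signal}}$ term. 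On it, every $\sigma$ at mis-match $m\ge m_0$ has score deficit $T_A(\sigma_0)-T_A(\sigma)\gtrsim m\,nI/K$. There are at most $\binom{n}{m}K^m\le e^{m\log(nK)}$ such labelings. The EM mass on stratum $m$ is therefore at most
\[
e^{m\log(nK)-\eta\, c\, m\,nI/K}=e^{-m\,c\,(nI/K)\,(\eta-B)}
\]
once $C_0$ is chosen so that $B$ absorbs the entropy $K\log(nK)/(nI)$. Here $\gamma_0=\eta-B>0$ by \eqref{eq:feasib}, and I will also use the scaling by $t^\star$. Summing over $m$ shows that with conditional probability at least $1-\alpha$ the output is an $s^\star$-slack near-optimizer, where $s^\star=(C_1\log(nK)+\log(4/\alpha))/\gamma_0$ is exactly what makes the tail $\le\alpha$. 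That yields the $+\alpha$ term.

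\textbf{Step 2 ($\gamma$-slack risk bound).} Repeat the Chernoff argument of \cite{Zhang2016Minimax} with slack. For a fixed $\sigma$ differing from $\sigma_0$ on a set involving $m$ nodes, the event $T_A(\sigma)\ge T_A(\sigma_0)-\gamma$ has probability at most
\[
\bbE e^{t^\star(T_A(\sigma)-T_A(\sigma_0))}e^{t^\star\gamma}\le e^{-m(1+o(1))\mathrm{Signal}+t^\star\gamma},
\]
using the choice of $\lambda$ through $t^\star$ in \eqref{eq:lambdaK2}--\eqref{eq:lambdaKge3}. This mirrors their argument for the MLE, in which the single-node swap yields the sharp exponent $\mathrm{Signal}$. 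Summing over $m$ and using Assumptions~\ref{ass:strong}--\ref{ass:mildK} to control combinatorial factors, together with the global step for large $m$, gives
\[
\bbE\,r\le e^{-(1+o(1))\mathrm{Signal}+t^\star\gamma}
\]
plus the failure probability of the good event. Plugging in $\gamma=s^\star$ gives the first display.

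\textbf{Step 3 (simplified bound).} If $\eta\ge 2B$ then $\gamma_0\ge \eta/2=\varepsilon/(8\Delta_a)$. With $\alpha=e^{-c_3\varepsilon/2}/(nK)$ we have $\log(4/\alpha)\lesssim \log(nK)+\varepsilon$, so
\[
s^\star\lesssim \Delta_a\frac{\log(nK)}{\varepsilon}+\Delta_a .
\]
Since $t^\star=\Theta(1)$, \eqref{eq:cor_simplified} follows.

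The main obstacle is Step 2 combined with Step 1: a near-optimizer need not be close to $\sigma_0$ merely because its score is close. The sharp exponent $(1+o(1))\mathrm{Signal}$ must survive an additive slack $s^\star$ that can be as large as $\Delta_a\gtrsim\log n$. I would first try to follow the two-stage Zhang--Zhou proof directly: a crude global bound localizing to $r\le$ small, then the local per-node Chernoff analysis. The slack enters only as the multiplicative factor $e^{t^\star\gamma}$ in each Chernoff bound, so it never interacts with the entropy counting. The delicate point is ensuring that the global localization step also tolerates slack $\gamma$, which requires $\gamma\ll$ the global deficit of order $nI\cdot n/K$.
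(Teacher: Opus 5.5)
\textbf{Gap in Step 1.} Your overall architecture matches the paper's. You restrict to $\mathcal G_C$, show the EM output is $s^\star$-near-optimal with probability $1-\alpha-e^{-c_0\mathrm{Signal}}$, feed this into a slack version of the layer sum in which slack only contributes a factor $e^{t^\star s}$, and then simplify. Steps 2 and 3 are essentially the paper's Lemma~\ref{lem:risk-with-score-slack} and its final computation.

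The problem is Step 1. Stratifying the EM mass by Hamming distance, using $Z\ge e^{\eta T_A(\sigma_0)}$ and a deficit lower bound for $m\ge m_0$, controls the conditional law of $d(\hat\sigma,\sigma_0)$. It does not show $T_A(\hat\sigma)\ge\max_{\tau\in\Sigma_\beta}T_A(\tau)-s^\star$. Nothing in that argument bounds the EM mass on labelings at small Hamming distance whose score is far below $T_A^\star$. Deficit lower bounds for far labelings say nothing about near ones, and the slack is measured relative to the maximizer, not $\sigma_0$. Likewise, imposing $\sum_{m\ge m_1}e^{-mc(nI/K)\gamma_0}\le\alpha$ determines a Hamming radius $m_1$, not the score slack $s^\star=(C_1\log(nK)+\log(4/\alpha))/\gamma_0$. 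You cannot bypass the conversion: Hamming localization alone gives only $\bbE r\lesssim m_1/n$. Under Assumption~\ref{ass:strong} this is far larger than $e^{-\mathrm{Signal}}$, so Step 2 genuinely needs score slack.

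\textbf{The missing ingredient} is a bound on the size of the score level sets $S_s(A)$, as in Lemma~\ref{lem:nearoptimal}.
\begin{enumerate}
\item On a good event, $T_A(\sigma)\ge T_A(\sigma_0)-s$ forces $d(\sigma,\sigma_0)\le m_\star(s)\approx(t^\star s+O(\mathrm{Signal}))/\delta$, with $\delta\asymp\mathrm{Signal}$.
\item So each unit of slack buys $O(1/\mathrm{Signal})$ extra mismatches at entropy cost $\log(enK)$ each, which gives $\log|S_s(A)|\le Bs+C_1\log(nK)$.
\item You then peel over score levels $\ell s$ (Lemma~\ref{lem:peeling}) to get $\sum_{\ell\ge1}|S_{\ell s}(A)|e^{-\eta\ell s}\le(nK)^{C_1}e^{-\gamma_0 s}/(1-e^{-\gamma_0 s})$. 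This is exactly what produces $s^\star$.
\end{enumerate}
Alternatively, after localizing to radius $m_1$, union-bound over the at most $K!(enK)^{m_1+1}$ localized labelings using $Z\ge e^{\eta T_A^\star}$. This yields a slack of the same order.

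\textbf{The good event needs construction.} The uniform-deficit event you attribute to the Zhang--Zhou analysis is not provided by their in-expectation argument. It has to be built from your slack Chernoff bound with slack $s_m$ growing linearly in $m$, a union bound over the $(enK)^m$ orbits, and $\mathrm{Signal}\ge C_s\log(nK)$. These are Steps~1--2 of the paper's Lemma~\ref{lem:nearoptimal}.

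\textbf{Your worry about the global step is unnecessary.} The slack only multiplies each per-orbit Chernoff bound by $e^{t^\star s}$. The layer sum is therefore the non-private one times $e^{t^\star s}$, and no condition like $s\ll n^2I/K$ is needed.
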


\paragraph{Proof sketch.}
Conditional on $A\in\mathcal G_C$ the law of $\hat\sigma_f$ coincides with that of the restricted-domain EM. Lemma~\ref{lem:em-utility} shows that this mechanism
returns a labeling whose score is within $s^\star$ of $\max_{\tau\in\Sigma_\beta} T_A(\tau)$ with
probability at least $1-\alpha-e^{-c_0\mathrm{Signal}}$. We then introduce an auxiliary estimator
that equals $\hat\sigma_f$ on this good event and otherwise is a maximizer of $T_A$; this
estimator is always within score slack $s^\star$ of optimality. Lemma~\ref{lem:risk-with-score-slack}
therefore yields the desired exponential bound, with loss $t^\star s^\star$ in the exponent, and the
bad event contributes the additive term
$\alpha + e^{-c_0\mathrm{Signal}} + e^{-10(a+\log n)}$.

We now simplify our parameter assumptions in a natural, well-studied regime.
\begin{corollary}[Logarithmic privacy scale in the fixed-$K$, constant-SNR, log-degree regime]
\label{cor:log-privacy-fixed-K}
Under the assumptions of Theorem~\ref{thm:private_rate}, suppose in addition that
$
a,b=o(n)$, $a \gtrsim \log n$
and that \(K=O(1)\). Then the feasibility condition
\[
\varepsilon \gtrsim \Delta_a \frac{K\log(nK)}{nI}
\]
reduces to
\[
\varepsilon \gtrsim \log n.
\]
Consequently, in the fixed-$K$, constant-SNR, log-degree regime, a logarithmic privacy budget in \(n\) is sufficient for the conclusion of Theorem~\ref{thm:private_rate}  to hold.
\end{corollary}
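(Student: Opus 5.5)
The corollary is a direct simplification of the feasibility condition \eqref{eq:feasib}, so the plan is to estimate each factor on its right-hand side in the stated regime and then plug in. The condition reads $\varepsilon \ge 4 C_0 \Delta_a K\log(nK)/(nI)$. Three quantities need to be controlled: $\Delta_a$, $nI$, and $K\log(nK)$.

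\emph{Step 1: the sensitivity scale.} By \eqref{eq:Delta}, $\Delta_a = 2C\max\{a,\log n\}$. Since $a\gtrsim \log n$, we get $\max\{a,\log n\}\asymp a$, and hence $\Delta_a \asymp a$, with constants depending only on $C$ and on the implicit constant in $a\gtrsim\log n$.

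\emph{Step 2: the signal scale.} Because $a,b=o(n)$, the sparse expansion gives $nI=(\sqrt a-\sqrt b)^2+o(1)$. In the constant-SNR regime $\rho_-\le b/a\le\rho_+<1$, this yields
\[
(\sqrt a-\sqrt b)^2 = a\bigl(1-\sqrt{b/a}\bigr)^2 \in \bigl[a(1-\sqrt{\rho_+})^2,\ a(1-\sqrt{\rho_-})^2\bigr],
\]
so $nI\asymp a$, in agreement with the Remark following \eqref{eq:lambdaKge3}. The additive $o(1)$ error is negligible because $a\gtrsim\log n\to\infty$.

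\emph{Step 3: combine.} With $K=O(1)$ we have $K\log(nK)\asymp \log n$. Therefore
\[
\Delta_a\frac{K\log(nK)}{nI}\asymp a\cdot\frac{\log n}{a}=\log n .
\]
The sufficient condition $\varepsilon\gtrsim \Delta_a K\log(nK)/(nI)$ is thus equivalent, up to constants depending on $C,C_0,\rho_\pm$ and the bound on $K$, to $\varepsilon\gtrsim\log n$. In particular, choosing $\varepsilon\ge C'\log n$ with $C'$ large enough makes $\gamma_0>0$, and also makes $\eta\ge 2B$ if desired. Theorem~\ref{thm:private_rate} then applies.

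It remains to check that the other hypotheses of Theorem~\ref{thm:private_rate} are compatible with this regime. For Assumption~\ref{ass:mildK}, $K\log K$ is bounded while $\log(nK)\to\infty$, so it holds for large $n$. Assumption~\ref{ass:strong} is already included among the assumptions of the corollary, and in this regime it amounts to $a\ge C''\log n$.

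There is no real obstacle here. The only point requiring care is Step 2, where one must make sure that the $o(1)$ remainder in $nI$ and the ratio bounds $\rho_\pm$ give a two-sided comparison $nI\asymp a$ that is uniform in $n$.
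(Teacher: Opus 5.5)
Your proposal is correct and follows the paper's proof essentially step for step: you show $\Delta_a\asymp a$ from $a\gtrsim\log n$, $nI\asymp a$ from the sparse expansion together with constant SNR, and $K\log(nK)\asymp\log n$ for bounded $K$, and then combine the three. Your explicit two-sided bound on $(\sqrt a-\sqrt b)^2$ and your check of Assumption~\ref{ass:mildK} add detail that the paper omits, but they do not change the route.
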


\begin{proof}
Under the standing constant-SNR assumption and the sparse regime \(a,b=o(n)\), one has
\[
nI \asymp a.
\]
Moreover, since
\[
\Delta_a = 2C\max\{a,\log n\},
\]
the condition \(a \gtrsim \log n\) implies
\[
\Delta_a \asymp a.
\]
Hence
\[
\varepsilon \gtrsim \Delta_a \frac{K\log(nK)}{nI}
\asymp K\log(nK).
\]
If \(K\) is fixed, then \(K\log(nK)\asymp \log n\), which proves the claim.
\end{proof}

\begin{corollary}[Two-term upper bound for large privacy budgets]
\label{cor:large_privacy_two_term}
Under the assumptions of Corollary~\ref{cor:log-privacy-fixed-K},  there exist universal constants $L,c_{\mathrm{sig}},c_{\mathrm{priv}}>0$ such that,
for all sufficiently large $n$, if
\[
\varepsilon \ge L \log n,
\]
then
\[
\sup_{\Theta(n,K,a,b,\beta)}
E r(\sigma_0,\hat\sigma_f)
\;\le\;
\exp\{-c_{\mathrm{sig}}\,\mathrm{Signal}\}
\;+\;
\exp\{-c_{\mathrm{priv}}\varepsilon\}.
\]

\end{corollary}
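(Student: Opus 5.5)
We derive the corollary directly from the simplified bound \eqref{eq:cor_simplified} of Theorem~\ref{thm:private_rate}, specialized to fixed $K$, constant SNR, and $a\gtrsim\log n$.

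\textbf{Step 1: calibrate $L$.} In this regime $nI\asymp a$ and $\Delta_a\asymp a$, so $B=C_0K\log(nK)/(nI)\asymp \log n/a$ and $\eta=\varepsilon/(4\Delta_a)\asymp \varepsilon/a$. Choosing $L$ large makes $\eta\ge 2B$ whenever $\varepsilon\ge L\log n$. This gives both the feasibility condition \eqref{eq:feasib} and the hypothesis $\eta\ge 2B$ required for the simplified form \eqref{eq:cor_simplified}.

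\textbf{Step 2: the privacy-cost term in the exponent.} With $\varepsilon\ge L\log n$ we have
\[
\Delta_a\frac{\log(nK)}{\varepsilon}\lesssim \frac{a}{L},
\]
which is at most a small constant fraction of $\mathrm{Signal}\asymp a$ once $L$ is large.

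The additional $\mathcal O(\Delta_a)\asymp a$ term is the delicate part, because it is of the same order as $\mathrm{Signal}$. It comes from the $\log(4/\alpha)/\gamma_0$ contribution to $s^\star$ with $\alpha=e^{-c_3\varepsilon/2}/(nK)$: it equals $c_3\varepsilon/(2\gamma_0)\lesssim c_3\Delta_a$. Since $c_3>0$ is a free parameter in the theorem, we choose $c_3$ small enough that $t^\star\cdot O(c_3\Delta_a)\le \mathrm{Signal}/4$. This uses $t^\star=\Theta(1)$ and $\Delta_a\asymp a\asymp\mathrm{Signal}$, with constants depending only on $\rho_\pm,\beta,K$.

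Together with the $(1+o(1))$ factor, the first term of \eqref{eq:cor_simplified} is then at most $\exp\{-\mathrm{Signal}/2\}$ for large $n$. We expect this step to be the main obstacle. It requires tracing exactly how $s^\star$ depends on $c_3$, and, if the hidden constant in $\mathcal O(\Delta_a)$ does not scale with $c_3$, re-deriving $s^\star$ from \eqref{eq:sstar} directly using $\gamma_0\ge\eta/2$:
\[
s^\star\le \frac{2\bigl(C_1\log(nK)+\log 4+\log(nK)+c_3\varepsilon/2\bigr)}{\eta}
\lesssim \frac{\Delta_a\log n}{\varepsilon}+c_3\Delta_a .
\]

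\textbf{Step 3: the remaining terms.} The term $\frac{1}{nK}e^{-c_3\varepsilon/2}$ is at most $e^{-(c_3/2)\varepsilon}$. The term $e^{-c_0\mathrm{Signal}}$ is already of the signal form. For the last term, $a\gtrsim\log n$ and constant SNR give $e^{-c_1(a+\log n)}\le e^{-c'\mathrm{Signal}}$, because $\mathrm{Signal}\asymp a$.

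\textbf{Step 4: collect terms.} Summing the three signal-type terms, each at most $e^{-c\,\mathrm{Signal}}$, gives at most $3e^{-c\,\mathrm{Signal}}$. By Assumption~\ref{ass:strong}, $\mathrm{Signal}\to\infty$, so this is absorbed into $\exp\{-c_{\mathrm{sig}}\mathrm{Signal}\}$ by slightly decreasing the constant. The privacy term gives $\exp\{-c_{\mathrm{priv}}\varepsilon\}$ with $c_{\mathrm{priv}}=c_3/2$.

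Finally, none of these bounds depends on $\sigma_0\in\Sigma_\beta$, so taking the supremum over $\Theta(n,K,a,b,\beta)$ completes the argument. Here ``universal'' is understood as depending only on the fixed $K,\beta,\rho_\pm$.
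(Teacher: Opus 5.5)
Your proposal is correct and follows essentially the same route as the paper. You choose $L$ so that $\eta\ge 2B$, take $\alpha=(nK)^{-1}e^{-\kappa\varepsilon}$ with $\kappa$ (your $c_3/2$) small so that the $\log(4/\alpha)/\gamma_0$ contribution to $t^\star s^\star$ is a small multiple of $a\asymp\mathrm{Signal}$, and absorb the remaining terms using $a\asymp\mathrm{Signal}$; your fallback of re-deriving $s^\star$ from \eqref{eq:sstar} with $\gamma_0\ge\eta/2$ is exactly what the paper's proof does, rather than quoting the $\mathcal O(\Delta_a)$ term in \eqref{eq:cor_simplified}.
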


\begin{proof}
Under the constant-SNR assumption and the sparse regime $a,b=o(n)$, one has
$nI \asymp a$. Since $a \gtrsim \log n$ and
\[
\Delta_a = 2C\max\{a,\log n\},
\]
it follows that $\Delta_a \asymp a$. Because $K=O(1)$ and $\beta=O(1)$, we also have
\[
\mathrm{Signal} \asymp a.
\]
Therefore
\[
B = C_0\frac{K\log(nK)}{nI} \asymp \frac{\log(nK)}{a},
\qquad
\eta = \frac{\varepsilon}{4
\Delta_a} \asymp \frac{\varepsilon}{a}.
\]

Choose a sufficiently small universal constant $\kappa>0$ and set
\[
\alpha := \frac{1}{nK}e^{-\kappa\varepsilon}.
\]
If $\varepsilon \ge L\log n$ with $L$ large enough, then $\eta \ge 2B$, and hence
\[
\gamma_0=\eta-B \asymp \frac{\varepsilon}{a}.
\]
By Theorem~\ref{thm:private_rate},
\[
s^\star
=
\frac{C_1\log(nK)+\log(4/\alpha)}{\gamma_0}
\;\lesssim\;
\frac{a\log(nK)}{\varepsilon}+\kappa a.
\]
Since $t^\star=\Theta(1)$ in the constant-SNR sparse regime, this gives
\[
t^\star s^\star
\;\lesssim\;
\frac{a\log(nK)}{\varepsilon}+\kappa a.
\]
Because $\mathrm{Signal}\asymp a$, by taking $L$ large enough and then $\kappa$ small enough,
we obtain
\[
t^\star s^\star \le \frac{1}{2}\,\mathrm{Signal}
\]
for all sufficiently large $n$. Hence the leading term in Theorem~\ref{thm:private_rate} satisfies
\[
\exp\{-(1+o(1))\mathrm{Signal}+t^\star s^\star\}
\le
\exp\{-c_{\mathrm{sig}}\mathrm{Signal}\}
\]
for some universal constant $c_{\mathrm{sig}}>0$.

For the remaining terms, we have 
\[
\alpha=\frac{1}{nK}e^{-\kappa\varepsilon},
\qquad
e^{-c_0\mathrm{Signal}}
\le
e^{-c'_{\mathrm{sig}}\mathrm{Signal}},
\qquad
e^{-10(a+\log n)}
\le
e^{-c''_{\mathrm{sig}}\mathrm{Signal}},
\]
since $a\asymp \mathrm{Signal}$. Absorbing the last two terms into the signal-exponential term and $\varepsilon \ge L \log n$
proves the claim.
\end{proof}
\section{The lower bound}\label{sec:lower_bound}

We now present our node-privacy lower bound, clarifying further the trade-off between privacy and SBM community detection. For fixed $(a,b)$, each parameter $\theta\in\Theta(n,K,a,b,\beta)$ is determined by a balanced
labeling only up to permutation of the community labels. For each $\theta\in\Theta(n,K,a,b,\beta)$,
fix an arbitrary representative labeling $\sigma_\theta\in\Sigma_\beta$. Since both the SBM law and
the loss $r(\sigma,\hat\sigma)$ are invariant under relabeling, all quantities below are independent
of the chosen representative.
\begin{theorem}\label{thm:lower_bound}[Two-point minimax lower bounds under node-DP]
Fix $K \ge 2$ and $\beta \ge 1$. Assume $n/(\beta K)\ge 2$ so that each class contains at least
two vertices. Fix $(a,b)$, and let
\[
\Theta := \Theta(n,K,a,b,\beta)
\]
be the corresponding homogeneous SBM family.

Let $M$ be any $\varepsilon$-node-DP mechanism (with respect to node-adjacency), and let
\[
\hat\sigma = M(A)\in [K]^n
\]
be its output on input adjacency matrix $A$.

For each $\theta\in\Theta$, let $\sigma_\theta\in\Sigma_\beta$ be a representative ground-truth
labeling corresponding to $\theta$, and let $\mathbb P_\theta$ and $\mathbb E_\theta$ denote
probability and expectation under $\theta$.

Define the exact-recovery failure probability
\[
\delta(M):=\sup_{\theta\in\Theta}\mathbb P_\theta\!\left(r(\sigma_\theta,\hat\sigma)>0\right).
\]
Then
\[
\delta(M)\ge \frac{1}{1+e^{2\varepsilon}} \ge \frac12 e^{-2\varepsilon}.
\]
Consequently,
\begin{equation}\label{Eq:mismatch_pr_lb}
\inf_{M\ \varepsilon\text{-node-DP}}
\sup_{\theta\in\Theta}
\mathbb P_\theta\!\left(r(\sigma_\theta,M(A))>0\right)
\ge
\frac{1}{1+e^{2\varepsilon}}.
\end{equation}

Moreover, define the expected mis-match minimax risk
\[
R(M):=\sup_{\theta\in\Theta}\mathbb E_\theta\!\left[r(\sigma_\theta,\hat\sigma)\right].
\]
Then
\[
R(M)\ge \frac{1}{n(1+e^{2\varepsilon})}\ge \frac{1}{2n}e^{-2\varepsilon},
\]
and therefore
\begin{equation}\label{eq:Er-lb}
\inf_{M\ \varepsilon\text{-node-DP}}
\sup_{\theta\in\Theta}
\mathbb E_\theta\!\left[r(\sigma_\theta,M(A))\right]
\ge
\frac{1}{n(1+e^{2\varepsilon})}.
\end{equation}
\end{theorem}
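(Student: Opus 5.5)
We use a two-point argument with a coupling of two SBM instances whose graphs are close in node distance. Pick a balanced labeling $\sigma_0\in\Sigma_\beta$, two vertices $u,v$ with $\sigma_0(u)=k\neq \ell=\sigma_0(v)$, and let $\sigma_1$ be $\sigma_0$ with the labels of $u$ and $v$ swapped. Class sizes are unchanged, so $\sigma_1\in\Sigma_\beta$. Since each class has at least two vertices (this is where $n/(\beta K)\ge 2$ is used), $\sigma_1$ is not a relabeling of $\sigma_0$. Indeed, any permutation $\pi$ with $\pi\circ\sigma_0=\sigma_1$ would have to fix $k$ on the other members of class $k$, yet send $k$ to $\ell$ at $u$. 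Hence the two labelings define distinct parameters $\theta_0,\theta_1$, and
\[
\{\hat\sigma:\ r(\sigma_0,\hat\sigma)=0\}\cap\{\hat\sigma:\ r(\sigma_1,\hat\sigma)=0\}=\emptyset .
\]

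Next we couple $A^{(0)}\sim\mathbb P_{\theta_0}$ and $A^{(1)}\sim\mathbb P_{\theta_1}$ so that $d_v(A^{(0)},A^{(1)})\le 2$ almost surely. The edge laws of the two models agree on every pair not touching $\{u,v\}$. They also agree on the pair $\{u,v\}$ itself, since $u,v$ are in different communities under both labelings. So we draw all pairs not incident to $u$ or $v$ once and share them. The edges incident to $u$ or $v$ are drawn independently under each law. The two graphs then differ only in edges incident to $u$ and $v$, so rewiring $u$ and then $v$ transforms one into the other, giving $d_v\le 2$.

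Iterating the node-DP inequality along this path gives, for any measurable $S$ and any fixed pair $(A,A')$ in the support of the coupling,
\[
\Pr(M(A)\in S)\le e^{2\varepsilon}\Pr(M(A')\in S).
\]
Averaging over the coupling yields $\mathbb P_{\theta_0}(\hat\sigma\in S)\le e^{2\varepsilon}\mathbb P_{\theta_1}(\hat\sigma\in S)$, where the probabilities include the mechanism's randomness. Set $S_j=\{r(\sigma_j,\hat\sigma)=0\}$ and $p_j=\mathbb P_{\theta_j}(S_j)$. Since $S_0\subseteq S_1^c$, we get
\[
p_0\le \mathbb P_{\theta_0}(S_1^c)\le e^{2\varepsilon}\,\mathbb P_{\theta_1}(S_1^c)=e^{2\varepsilon}(1-p_1).
\]
With $\delta:=\max_j(1-p_j)$ this gives $1-\delta\le e^{2\varepsilon}\delta$, i.e. $\delta\ge 1/(1+e^{2\varepsilon})$. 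Moreover $1+e^{2\varepsilon}\le 2e^{2\varepsilon}$ for $\varepsilon\ge 0$, which gives the weaker form $\tfrac12 e^{-2\varepsilon}$. Taking the infimum over $M$ yields \eqref{Eq:mismatch_pr_lb}.

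For the mismatch bound, $nr\in\{0,1,\dots,n\}$, so $r\ge \tfrac1n\mathbf 1\{r>0\}$. Therefore $\mathbb E_\theta r\ge \tfrac1n\mathbb P_\theta(r>0)$, and taking the supremum over $\theta$ and applying the first bound gives \eqref{eq:Er-lb}.

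The only delicate step is certifying that the swap yields genuinely distinct parameters modulo label permutations, so that the success events are disjoint. This is exactly where the at-least-two-per-class hypothesis enters. The coupling and DP chaining are routine.
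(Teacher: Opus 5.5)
Your proof is correct and follows the paper's argument essentially step for step: the same label-swap pair with disjoint orbits, the same coupling that shares all edges away from $\{u,v\}$ to get $d_v\le 2$, group privacy with factor $e^{2\varepsilon}$, the resulting $\delta\ge 1/(1+e^{2\varepsilon})$, and the bound $r\ge \frac1n\mathbf 1\{r>0\}$. The only differences are cosmetic: you apply the DP inequality to $S_1^c$ rather than to the success event of $\sigma_0$, and you share the pair $\{u,v\}$ in the coupling instead of resampling it.
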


\begin{proof}
	\leavevmode\paragraph{Step 1: Pick a hard pair of balanced labelings.}
    Fix any $\theta\in\Theta$, and let $\sigma:=\sigma_\theta\in\Sigma_\beta$ be a representative
ground-truth labeling.
	 Choose two vertices $u\neq v$ that belong to two different
	communities under $\sigma$, say $\sigma(u)=k$ and $\sigma(v)=\ell$ with $k\neq \ell$.
	Define $\sigma'\in[K]^n$ by swapping the labels of $u$ and $v$:
	\[
	\sigma'(u)=\ell,\qquad \sigma'(v)=k,\qquad \sigma'(i)=\sigma(i)\ \text{ for }i\notin\{u,v\}.
	\]
	Then $\sigma'$ has exactly the same community sizes as $\sigma$, hence $\sigma'\in\Sigma_\beta$. Let $\theta'\in \Theta$ be the SBM parameter corresponding to $\sigma'$.
	
	Let $S_K$ be the permutation group on $[K]$, and write the orbit
	$\Gamma(\sigma):=\{\pi\circ\sigma:\pi\in S_K\}$.
	Define the \emph{exact recovery} events
	\[
	E_\sigma \;:=\;\{\hat\sigma\in\Gamma(\sigma)\}
	\qquad\text{and}\qquad
	E_{\sigma'} \;:=\;\{\hat\sigma\in\Gamma(\sigma')\}.
	\]
	By definition of the mis-match ratio, $E_\sigma=\{r(\sigma,\hat\sigma)=0\}$ and similarly for
	$\sigma'$.
	
	We claim that these target sets are disjoint:
	\[
	\Gamma(\sigma)\cap\Gamma(\sigma')=\varnothing.
	\]
	Indeed, because $n/(\beta K)\ge 2$, community $k$ contains some vertex $w\neq u$ and
	community $\ell$ contains some vertex $w'\neq v$.
	If $\sigma'=\pi\circ\sigma$ for some $\pi\in S_K$, then
	$\sigma'(w)=\sigma(w)=k$ implies $\pi(k)=k$, and $\sigma'(w')=\sigma(w')=\ell$ implies
	$\pi(\ell)=\ell$. But then $\sigma'(u)=\pi(\sigma(u))=\pi(k)=k$, contradicting $\sigma'(u)=\ell$.
	Thus $\sigma'\notin\Gamma(\sigma)$ and the orbits are disjoint, which implies
	\[
	E_\sigma\cap E_{\sigma'}=\varnothing.
	\]
	
	\leavevmode\paragraph{Step 2: Couple the two SBM graph distributions with node-distance $\le 2$.}
	Let $\mu_\sigma$ (resp.\ $\mu_{\sigma'}$) denote the distribution of $A$ under
	$\mathrm{SBM}(\sigma)$ (resp.\ $\mathrm{SBM}(\sigma')$).
	Construct a coupling $(A,A')$ of $\mu_\sigma$ and $\mu_{\sigma'}$ as follows:
	
	\begin{itemize}
		\item For all pairs $\{i,j\}$ with 
		$\{i,j\}\cap\{u,v\}=\varnothing$, sample $A_{ij}$ once according
		to the SBM edge probability under $\sigma$, and set $A'_{ij}=A_{ij}$.
		(This is valid because $\sigma$ and $\sigma'$ coincide on all vertices outside $\{u,v\}$, so the
		edge probabilities for these pairs are identical under $\sigma$ and $\sigma'$.)
		\item For all edges incident to $u$ or $v$ (including $\{u,v\}$), sample $(A_{ij})$ according to the
		correct SBM probabilities under $\sigma$ and sample $(A'_{ij})$ according to the correct SBM
		probabilities under $\sigma'$, independently of the first step.
	\end{itemize}
	
	Then $A\sim\mu_\sigma$, $A'\sim\mu_{\sigma'}$, and $A$ and $A'$ differ \emph{only} on edges incident
	to $u$ and/or $v$. Therefore, in the node metric $d_v$ (minimum number of vertex-neighborhood
	rewirings), we have
	\[
	d_v(A,A')\le 2 \qquad \text{almost surely under the coupling.}
	\]
	
	\paragraph{Step 3: Use group privacy to compare output probabilities under $\sigma$ and $\sigma'$.}
	Since $M$ is $\varepsilon$-node-DP, group privacy implies that for any two graphs $G,G'$
	with $d_v(G,G')\le 2$ and any measurable output event $S$,
	\[
	\mathbb P(M(G)\in S)\ \le\ e^{2\varepsilon}\, \mathbb P(M(G')\in S).
	\]
	Applying this pointwise to $(A,A')$ in the above coupling (where $d_v(A,A')\le 2$ a.s.) and taking
	expectations over the coupling yields, for all measurable $S$,
	\[
	\mathbb P_{\theta}\!\big(M(A)\in S\big)
	\;\le\;
	e^{2\varepsilon}\,
	\mathbb P_{\theta'}\!\big(M(A)\in S\big).
	\]
	Equivalently,
	\[
	\mathbb P_{\theta'}\!\big(M(A)\in S\big)
	\;\ge\;
	e^{-2\varepsilon}\,
	\mathbb P_{\theta}\!\big(M(A)\in S\big).
	\]
	
	\paragraph{Step 4: Plug in the disjoint exact-recovery events and derive the lower bound on $\delta$.}
	Apply the last inequality with $S=\Gamma(\sigma)$, i.e.\ $S=E_\sigma$:
	\[
	\mathbb P_{\theta'}(E_\sigma)\ \ge\ e^{-2\varepsilon}\, \mathbb P_{\theta}(E_\sigma).
	\]
	Because $E_\sigma\cap E_{\sigma'}=\varnothing$, under $\sigma'$ we have
	\[
	\mathbb P_{\theta'}(E_\sigma)\ \le\ 1-\mathbb P_{\theta'}(E_{\sigma'}).
	\]
	Combine the two displays:
	\[
	1-\mathbb P_{\theta'}(E_{\sigma'})\ \ge\ e^{-2\varepsilon}\, \mathbb P_{\theta}(E_\sigma).
	\]
	Define the failure probabilities
	\[
	\delta_\sigma := 1-\mathbb P_{\theta}(E_\sigma)
	= \mathbb P_\theta\!\big(r(\sigma,\hat\sigma)>0\big),
	\qquad
	\delta_{\sigma'} := 1-\mathbb P_{\theta'}(E_{\sigma'})
	= \mathbb P_{\theta'}\!\big(r(\sigma',\hat\sigma)>0\big).
	\]
	Then the previous inequality becomes
	\[
	\delta_{\sigma'} \ \ge\ e^{-2\varepsilon}(1-\delta_\sigma).
	\]
	Now set $\delta := \max\{\delta_\sigma,\delta_{\sigma'}\}$. Since $1-\delta_\sigma\ge 1-\delta$,
	\[
	\delta \;\ge\; \delta_{\sigma'} \;\ge\; e^{-2\varepsilon}(1-\delta).
	\]
	Rearranging,
	\[
	\delta(1+e^{-2\varepsilon})\ \ge\ e^{-2\varepsilon}
	\qquad\Longrightarrow\qquad
	\delta\ \ge\ \frac{e^{-2\varepsilon}}{1+e^{-2\varepsilon}}
	\ =\ \frac{1}{1+e^{2\varepsilon}}.
	\]
	Finally, since $1+e^{2\varepsilon}\le 2e^{2\varepsilon}$, we also have
	\[
	\delta\ \ge\ \frac{1}{1+e^{2\varepsilon}}\ \ge\ \frac12 e^{-2\varepsilon}.
	\]
Since for this fixed mechanism $M$ we have found two parameters $\theta,\theta'\in\Theta$ such
that
\[
\max\Bigl\{
\mathbb P_\theta(r(\sigma_\theta,\hat\sigma)>0),
\mathbb P_{\theta'}(r(\sigma_{\theta'},\hat\sigma)>0)
\Bigr\}
\ge \frac{1}{1+e^{2\varepsilon}},
\]
it follows that
\[
\delta(M)
=
\sup_{\vartheta\in\Theta}
\mathbb P_\vartheta\!\left(r(\sigma_\vartheta,\hat\sigma)>0\right)
\ge
\frac{1}{1+e^{2\varepsilon}}.
\]

    \paragraph{Step 5: Convert the failure lower bound into an $\mathbb E[r]$ lower bound.}
For each $\theta\in\Theta$, since
\[
n\,r(\sigma_\theta,\hat\sigma)\in\{0,1,\dots,n\},
\]
we have
\[
r(\sigma_\theta,\hat\sigma)\ge \frac1n \mathbf 1\{r(\sigma_\theta,\hat\sigma)>0\}.
\]
Taking expectation under $\theta$ and then the supremum over $\theta\in\Theta$ gives
\[
R(M)
=
\sup_{\theta\in\Theta}\mathbb E_\theta[r(\sigma_\theta,\hat\sigma)]
\ge
\frac1n
\sup_{\theta\in\Theta}\mathbb P_\theta\!\left(r(\sigma_\theta,\hat\sigma)>0\right)
=
\frac{\delta(M)}{n}.
\]
Combining this with the bound proved in Step~4,
\[
\delta(M)\ge \frac{1}{1+e^{2\varepsilon}},
\]
yields
\[
R(M)\ge \frac{1}{n(1+e^{2\varepsilon})}.
\]
Taking the infimum over all $\varepsilon$-node-DP mechanisms $M$ concludes the minimax
lower bound for the expected mis-match risk.
\end{proof}
\begin{remark}[Uniformity over larger parameter classes]\label{rq:larger_class}
Although Theorem~\ref{thm:lower_bound} is stated for the fixed-$(a,b)$ class
$\Theta(n,K,a,b,\beta)$, the proof does not use the numerical values of $a$ and $b$.
It only relies on the existence of two balanced labelings $\sigma,\sigma'\in\Sigma_\beta$
that differ by swapping two vertices from different communities, together with the fact
that the corresponding SBM laws can be coupled so that the sampled graphs differ only on
the neighborhoods of those two vertices. Consequently, the same lower bound holds uniformly
over any larger parameter class obtained by allowing $(a,b)$ to vary.

More precisely, if $\mathcal A_n$ is any nonempty collection of admissible pairs $(a,b)$ and
\[
\widetilde{\Theta}(n,K,\beta;\mathcal A_n)
:=
\bigcup_{(a,b)\in \mathcal A_n}\Theta(n,K,a,b,\beta),
\]
then
\[
\inf_{M\ \varepsilon\text{-node-DP}}
\sup_{\theta\in \widetilde{\Theta}(n,K,\beta;\mathcal A_n)}
\mathbb P_\theta\!\left(r(\sigma_\theta,M(A))>0\right)
\ge \frac{1}{1+e^{2\varepsilon}},
\]
and similarly
\[
\inf_{M\ \varepsilon\text{-node-DP}}
\sup_{\theta\in \widetilde{\Theta}(n,K,\beta;\mathcal A_n)}
\mathbb E_\theta\!\left[r(\sigma_\theta,M(A))\right]
\ge \frac{1}{n(1+e^{2\varepsilon})}.
\]
Indeed, for every fixed $(a,b)\in \mathcal A_n$, one has
$\Theta(n,K,a,b,\beta)\subseteq \widetilde{\Theta}(n,K,\beta;\mathcal A_n)$, so the
larger-class lower bound follows immediately from Theorem~\ref{thm:lower_bound}.
\end{remark}


Remark \ref{rq:larger_class} shows that Theorem \ref{thm:lower_bound} continues to hold uniformly over broad unions of SBM parameter classes. To interpret this lower bound in a concrete and classical regime, it is natural to compare it with the non-private benchmark in the logarithmic-degree setting near the exact-recovery threshold. In the symmetric two-community model with $a = c_{\mathrm{in}} \log n$ and $b = c_{\mathrm{out}} \log n$, for constants $0 < c_{\mathrm{out}} < c_{\mathrm{in}}$, exact recovery is possible precisely when  $(\sqrt{c_{\mathrm{in}}} - \sqrt{c_{\mathrm{out}}})^2 > 2$ \cite{AbbeBandeiraHall2016}. In this regime, the non-private minimax expected mis-match risk, by \cite{Zhang2016Minimax}, is of the order $\exp(-\mathrm{Signal})$, which for $K=2$ and under standard sparse asymptotics becomes \[\exp\{\mathrm{-Signal}\}=n^{-(\sqrt{c_{\mathrm{in}}}-\sqrt{c_{\mathrm{out}}})^2/2+o(1)}\] 
By Markov’s inequality, this implies that the exact-recovery failure probability is at most
\[n\exp\{\mathrm{-Signal}\}=n^{1-(\sqrt{c_{\mathrm{in}}}-\sqrt{c_{\mathrm{out}}})^2/2+o(1)}=n^{-\Omega(1)}\] 
whenever the threshold condition holds. This makes polynomially small exact-recovery failure a natural benchmark in this regime. The next corollary shows that, under pure node-level differential privacy, achieving such a guarantee already requires $\varepsilon=\Omega(\log n)$.
\begin{corollary}
	\label{cor:logn-necessary}
	For any constant $c>0$:
	\begin{itemize}
		\item If $M$ achieves polynomially small exact-recovery failure,
		\[
		\delta(M)\ \le\ n^{-c},
		\]
		then necessarily
		\[
		\varepsilon\ \ge\ \frac12\log(n^c-1)\ \ge\ \frac{c}{2}\log n-\frac12\log 2,
		\]
		so $\varepsilon=\Omega(\log n)$.
		\item If $M$ achieves polynomially small expected mismatch,
		\[
		R(M)\ \le\ n^{-(1+c)},
		\]
		then necessarily
		\[
		\varepsilon\ \ge\ \frac12\log(n^c-1)\ \ge\ \frac{c}{2}\log n-\frac12\log 2,
		\]
		so again $\varepsilon=\Omega(\log n)$.
	\end{itemize}
\end{corollary}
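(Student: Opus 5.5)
This corollary is a direct inversion of the two bounds in Theorem~\ref{thm:lower_bound}. The plan is to chain each hypothesis with the corresponding lower bound, solve the resulting inequality for $\varepsilon$, and then simplify the logarithm.

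For the first bullet, Theorem~\ref{thm:lower_bound} gives $\delta(M)\ge 1/(1+e^{2\varepsilon})$. Combining this with the hypothesis $\delta(M)\le n^{-c}$ yields
\[
\frac{1}{1+e^{2\varepsilon}}\le n^{-c},
\qquad\text{equivalently}\qquad
1+e^{2\varepsilon}\ge n^c .
\]
Hence $e^{2\varepsilon}\ge n^c-1$. When $n^c>1$, taking logarithms gives $\varepsilon\ge \tfrac12\log(n^c-1)$. The case $n^c\le 1$ cannot occur for $n\ge 2$ and $c>0$.

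For the second bullet, the theorem gives $R(M)\ge 1/\bigl(n(1+e^{2\varepsilon})\bigr)$. Combining this with $R(M)\le n^{-(1+c)}$ and multiplying through by $n$ again yields $1+e^{2\varepsilon}\ge n^c$, so the same conclusion follows.

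It remains to show $\tfrac12\log(n^c-1)\ge \tfrac c2\log n-\tfrac12\log 2$. This is equivalent to $n^c-1\ge n^c/2$, that is, $n^c\ge 2$. That holds once $n\ge 2^{1/c}$, which covers all sufficiently large $n$ and is the regime the asymptotic statement $\varepsilon=\Omega(\log n)$ concerns. For small $n$ with $n^c<2$, the right-hand side $\tfrac c2\log n-\tfrac12\log 2$ is negative, so the bound $\varepsilon\ge \tfrac c2\log n-\tfrac12\log 2$ holds trivially since $\varepsilon>0$. I will state this explicitly so the second inequality is read correctly in that range.

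There is no real obstacle. The only point needing care is this elementary step $n^c-1\ge n^c/2$, where the small-$n$ edge case has to be dispatched as above.
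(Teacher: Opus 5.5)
Your proposal is correct and follows the paper's proof: you combine each hypothesis with the matching bound of Theorem~\ref{thm:lower_bound} to get $1+e^{2\varepsilon}\ge n^c$, and then use $n^c-1\ge n^c/2$ when $n^c\ge 2$. Your extra treatment of the range $1<n^c<2$ is slightly more careful than the paper, which only covers $n^c\ge 2$: there the middle inequality of the chain fails, but the final bound holds trivially because its right-hand side is negative.
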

\subsection{Constant SNR regime}
Recall from Corollary \ref{cor:large_privacy_two_term}, in the constant SNR regime and under $\varepsilon \gg \log n,$ \[
\sup_{\Theta(n,K,a,b,\beta)}
\mathbb{E} r(\sigma_0,\hat\sigma_f)
\;\le\;
\exp\{-c_{\mathrm{sig}}\,\mathrm{Signal}\}
\;+\;
\exp\{-c_{\mathrm{priv}}\varepsilon\}.
\]
While Theorem \ref{thm:lower_bound} implies that  under $\varepsilon \gg \log n$,
 \[
\inf_{M\ \varepsilon\text{-node-DP}}\sup_{\Theta(n,K,a,b,\beta)}
\mathbb{E} r(\sigma_0,\hat\sigma)
\;\ge\;
n^{-1}\exp\{-\Theta(\varepsilon)\}=\exp\{-\Theta(\varepsilon)\}.
\]
On the other hand, since the class of $\varepsilon$-node-DP estimators is a subset of all estimators,  the non-private minimax lower bound from  \cite{Zhang2016Minimax} implies
 \begin{align}\label{eq:non_priv_LB}
\inf_{M\ \varepsilon\text{-node-DP}}\sup_{\Theta(n,K,a,b,\beta)}
\mathbb{E} r(\sigma_0,\hat\sigma)
\;\ge\;\exp\{-\Theta(\mathrm{Signal})\}.
\end{align}leading to the following characterization of the minimax-rate in the constant SNR regime.

\begin{corollary}[Two-sided minimax bound in the constant-SNR regime]\label{cor:rate_cnst}
Let
\[
\mathcal R_\varepsilon(n,K,a,b,\beta)
:=
\inf_{M:\,\varepsilon\text{-node-DP}}
\sup_{\Theta(n,K,a,b,\beta)}
\mathbb E r(\sigma_0,M(A)).
\]
Under the assumptions of Corollary~\ref{cor:log-privacy-fixed-K}, there exist universal constants
\[
L,\; c_{\mathrm{sig}},\; c_{\mathrm{priv}},\; C_{\mathrm{sig}},\; C_{\mathrm{priv}} > 0
\]
such that, for all sufficiently large \(n\), if
\[
\varepsilon \ge L \log n,
\]
then
\[
e^{-C_{\mathrm{sig}}\mathrm{Signal}} + e^{-C_{\mathrm{priv}}\varepsilon}
\;\lesssim\;
\mathcal R_\varepsilon(n,K,a,b,\beta)
\;\lesssim\;
e^{-c_{\mathrm{sig}}\mathrm{Signal}} + e^{-c_{\mathrm{priv}}\varepsilon}.
\]
Equivalently,
\[
\mathcal R_\varepsilon(n,K,a,b,\beta)
=
\exp\{-\Theta(\mathrm{Signal})\}
+
\exp\{-\Theta(\varepsilon)\}.
\]
\end{corollary}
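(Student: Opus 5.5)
The upper bound is exactly Corollary~\ref{cor:large_privacy_two_term}. The full-domain estimator $\hat\sigma_f$ of Lemma~\ref{lem:em-private} is $\varepsilon$-node-DP, so it is admissible in the infimum defining $\mathcal R_\varepsilon$. For $\varepsilon\ge L\log n$ with $L$ as in that corollary, its worst-case risk is at most $e^{-c_{\mathrm{sig}}\mathrm{Signal}}+e^{-c_{\mathrm{priv}}\varepsilon}$. This yields the right-hand inequality with the same constants.

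For the lower bound, I would take the maximum of two separate lower bounds and use $\max\{x,y\}\ge (x+y)/2$, so that the sum bound holds up to a factor $2$.

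The first lower bound is the non-private one. Every $\varepsilon$-node-DP mechanism is in particular an estimator, so $\mathcal R_\varepsilon$ is at least the non-private minimax risk of \cite{Zhang2016Minimax}, which equals $\exp\{-(1+o(1))\mathrm{Signal}\}$. This requires the exponential-rate regime of that paper. Under Assumption~\ref{ass:strong} with $\mathrm{Signal}\ge C_s\log(nK)\to\infty$, fixed $K$ and constant SNR, I expect its hypotheses to hold. Then for large $n$ the risk is at least $e^{-2\mathrm{Signal}}$, so $C_{\mathrm{sig}}=2$ works. This is \eqref{eq:non_priv_LB}.

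The second lower bound comes from Theorem~\ref{thm:lower_bound}. It gives $\mathcal R_\varepsilon\ge \frac{1}{2n}e^{-2\varepsilon}$, and the factor $1/n$ has to be absorbed. Since $\varepsilon\ge L\log n$ with $L\ge1$ (enlarging $L$ if necessary), we have $n\le e^{\varepsilon/L}\le e^{\varepsilon}$. Hence $\frac{1}{2n}e^{-2\varepsilon}\ge \frac12 e^{-3\varepsilon}$, and we may take $C_{\mathrm{priv}}=3$.

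Combining the three steps, $e^{-C_{\mathrm{sig}}\mathrm{Signal}}+e^{-C_{\mathrm{priv}}\varepsilon}\le 2\max\{\cdot,\cdot\}\le 4\,\mathcal R_\varepsilon$, which gives the two-sided bound. The $\Theta$-form restatement follows directly, since the lower and upper exponents differ only by universal constants.

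The main obstacle is checking that the Zhang--Zhou lower bound applies in the present regime with a universal constant in the exponent. Its $(1+o(1))$ is only asymptotic, and its growth conditions on $nI$ and $K$ have to be confirmed. If this is delicate, I would instead use a direct two-point argument: swapping one vertex between communities, the non-private testing error is at least of order $e^{-(1+o(1))\mathrm{Signal}}/n$. Since $\mathrm{Signal}\gtrsim\log n$, the extra $1/n$ is absorbed into a larger $C_{\mathrm{sig}}$.
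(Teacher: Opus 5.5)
Your proposal is correct and follows the same route as the paper: the upper bound is read off from Corollary~\ref{cor:large_privacy_two_term}, and the lower bound combines the non-private Zhang--Zhou bound \eqref{eq:non_priv_LB} with Theorem~\ref{thm:lower_bound}, absorbing the $1/n$ factor via $\varepsilon\ge L\log n$. Your explicit constants ($C_{\mathrm{sig}}=2$, and $C_{\mathrm{priv}}=3$ once $L\ge 1$) make precise the paper's informal step $n^{-1}e^{-\Theta(\varepsilon)}=e^{-\Theta(\varepsilon)}$; the paper handles the Zhang--Zhou applicability issue you flag only by citation and the remark after the corollary, and your single-swap fallback would also work, with a worse constant than $1+o(1)$ in the exponent, which is harmless here.
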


\begin{remark}
Note that the lower bound in \cite{Zhang2016Minimax} continues to hold under the additional assumption of constant SNR. For instance, it is enough to
assume that
\[
b \ge c a
\]
for some constant \(c\in(0,1)\), so that \(a \asymp b\). The point is that
the least favorable construction used in the proof keeps the same pair
\((a,b)\) and only restricts the community sizes, hence it remains admissible
under this additional assumption. Therefore the reduction from the global
misclassification risk to the corresponding local testing problem is
unchanged.
\end{remark}


\section{Conclusion and open problems}

We studied community detection in stochastic block models under pure node-level differential privacy, a stringent notion that protects the participation of a vertex together with all incident edges. On the achievability side, we analyzed an Exponential-Mechanism estimator built from the homogeneous-SBM penalized likelihood score and showed that, after restricting to a high-probability degree envelope and extending the mechanism to the full graph domain, one obtains a full-domain pure $\varepsilon$-node-DP estimator whose expected mis-match ratio is controlled by the non-private exponential-rate term together with an explicit privacy penalty. In the constant-SNR logarithmic-degree regime with fixed $K$ and $\beta$, this yields exact recovery under the requirement $\varepsilon \gtrsim \log n$, a strong improvement upon the naive $\varepsilon \gtrsim n$ requirement from directly analyzing the sensitivity of natural likelihood scores.

On the converse side, our two-point lower bound shows that pure node-DP imposes a genuine barrier for SBM recovery: any $\varepsilon$-node-DP mechanism has exact-recovery failure probability at least $(1+e^{2\varepsilon})^{-1}$ and expected mis-match at least $[n(1+e^{2\varepsilon})]^{-1}$. In particular, polynomially strong exact-recovery failure, or polynomially small expected mis-match, already forces $\varepsilon = \Omega(\log n)$. Thus the logarithmic privacy scale is not an artifact of the upper-bound analysis; it is an intrinsic feature of node-private community detection.

Taken together, our results give a clean rate-level picture of the problem. In the regime $\varepsilon \gg \log n$, the upper and lower bounds have the same two-term form, with one term governed by the SBM signal and the other by the privacy budget, matching up to universal constants in the exponents. In particular, our results identify the correct qualitative minimax tradeoff in this regime and show that exact node-private community recovery is possible without losing the exponential statistical structure of the non-private problem.

Beyond the resulting guarantees, the analysis also highlights a general mechanism for reconciling sharp likelihood-based inference with pure DP: a problem-adapted score, a $\gamma$-slack transfer from utility to risk, and a Lipschitz extension argument for controlling node sensitivity. We expect this combination of ideas to be useful more broadly for private inference problems.

\paragraph{Open problems.} Several natural questions remain.

\begin{enumerate}
    \item \textbf{Sharp constants in the node-private minimax risk.} Our upper and lower bounds already match in their two-term structure and in the relevant logarithmic privacy scale (Corollary \ref{cor:rate_cnst}). A natural next question is therefore to determine the sharp constants in the signal and privacy exponents, and more generally to pin down the exact crossover between the statistical and privacy terms as a function of $(nI,a,\varepsilon,K)$.

    \item \textbf{The threshold for vanishing exact-recovery failure.} Our lower bound implies that $\varepsilon \to \infty$ is necessary for $\Pr(r>0) \to 0$, and that polynomially small failure requires $\varepsilon = \Omega(\log n)$. Our upper bound gives sufficiency at roughly the logarithmic scale for the privacy budget, to achieve polynomially strong exact recovery in the constant-SNR logarithmic-degree regime. It remains open to determine the precise privacy threshold for (not polynomially strong) exact recovery under pure node-DP, and in particular whether $\varepsilon \asymp \log n$ is the correct threshold in all relevant regimes.

    \item \textbf{Computationally efficient node-private algorithms.} Our positive result is information-theoretic as it relies on the Exponential Mechanism over the full space $\Sigma_{\beta}$ which is not computationally efficient in general. An important next step is therefore to study whether one can design polynomial-time pure node-DP algorithms with guarantees comparable to those presented in this work, or whether node-private community detection exhibits an information-computation gap.
\end{enumerate}
\newpage

\appendix
\section{Proof of Theorem \ref{thm:private_rate}}

\subsection{A non-private baseline with $\gamma$-slack}

The first ingredient converts a near-optimality of the score \eqref{eq:T} into an exponentially small misclassification risk.

\begin{lemma}[Risk under score slack]\label{lem:risk-with-score-slack}\label{thm:risk-with-score-slack}
	Assume $K\ge 2$ and 
	\(
	\frac{nI}{K\log K}\to\infty.\)
    Let $\hat\sigma=\hat\sigma(A)\in \Sigma_\beta$ be any (possibly randomized) estimator such that, for some deterministic sequence $s_n\ge 0$,
	\begin{equation}\label{eq:score-slack-assumption}
		T_A(\hat\sigma)
		\;\ge\;
		\max_{\sigma\in\Sigma_\beta} T_A(\sigma)-s_n\qquad\text{a.s.}
	\end{equation}
	Then
	\[
	\sup_{\Theta(n,K,a,b,\beta)} \mathbb E\, r(\sigma_0,\hat\sigma)
	\;\le\;
	\begin{cases}
		\exp\!\bigl(-(1+o(1))\,\frac{nI}{2}+t^\star s_n\bigr), & K=2,\\[2mm]
		\exp\!\bigl(-(1+o(1))\,\frac{nI}{\beta K}+t^\star s_n\bigr), & K\ge 3.
	\end{cases}
	\]
\end{lemma}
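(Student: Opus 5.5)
We follow the Zhang--Zhou analysis of the penalized likelihood estimator and track where exact optimality is used; the only change is that every comparison inequality acquires an additive slack $s_n$, which the Chernoff tilt converts into a multiplicative factor $e^{t^\star s_n}$.

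\emph{Reduction to counts.} Since $n r(\sigma_0,\hat\sigma)$ is integer valued, we write
\[
\mathbb E\, r(\sigma_0,\hat\sigma)=\frac1n\sum_{m\ge1}\mathbb P\bigl(n r(\sigma_0,\hat\sigma)=m\bigr).
\]
For each $m$, the event $\{nr=m\}$ is contained in the event that some $\sigma\in\Sigma_\beta$ at permutation-invariant distance $m$ from $\sigma_0$ satisfies
\[
T_A(\sigma)\ge T_A(\sigma_0)-s_n .
\]
This uses $T_A(\hat\sigma)\ge \max_\tau T_A(\tau)-s_n\ge T_A(\sigma_0)-s_n$ together with the invariance of $T_A$ under label permutations.

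\emph{Per-labeling bound.} Fix $\sigma$ and write
\[
T_A(\sigma)-T_A(\sigma_0)=\sum_{i<j}(A_{ij}-\lambda)\bigl(\1\{\sigma(i)=\sigma(j)\}-\1\{\sigma_0(i)=\sigma_0(j)\}\bigr).
\]
This is a sum of independent centered-shifted Bernoulli terms. Let $\alpha(\sigma)$ be the number of pairs that are within-community under $\sigma_0$ but not under $\sigma$, and $\beta'(\sigma)$ the number of pairs in the reverse situation. Markov's inequality with $e^{t^\star(\cdot)}$ gives
\[
\mathbb P\bigl(T_A(\sigma)-T_A(\sigma_0)\ge -s_n\bigr)\le e^{t^\star s_n}\,\bbE\, e^{t^\star(T_A(\sigma)-T_A(\sigma_0))}.
\]
By the choice of $\lambda$ in \eqref{eq:lambdaK2}/\eqref{eq:lambdaKge3}, the MGF factors into per-pair terms. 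These combine to $\exp\{-\tfrac{I}{2}(\alpha+\beta')\}$ for $K=2$, and to the analogous weighted expression for $K\ge3$, exactly as in Zhang--Zhou.

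\emph{Counting.} We then quote the combinatorial estimates of Zhang--Zhou. For $\sigma$ at distance $m$, $\alpha+\beta'\gtrsim m n/(\beta K)$, and for small $m$ this sharpens to
\[
\alpha+\beta'\ge (1-o(1))\,m\,\frac{n}{\beta K}\ \text{(or } mn \text{ for } K=2\text{)}.
\]
The number of such $\sigma$ is at most $(eKn/m)^m$ up to $K!$. Summing the union bound gives
\[
\mathbb E r\le e^{t^\star s_n}\sum_{m\ge1}\frac{1}{n}\Bigl(\frac{enK}{m}\Bigr)^m \exp\bigl(-(1+o(1))\,m\,\mathrm{Signal}\bigr).
\]
The hypothesis $nI/(K\log K)\to\infty$ makes the series dominated by the $m=1$ term, yielding $\exp(-(1+o(1))\mathrm{Signal}+t^\star s_n)$. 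The factor $nK$ in the $m=1$ term is absorbed into the $o(1)$ because we divide by $n$ and $\log K=o(\mathrm{Signal})$. For large $m$ a cruder bound suffices, since $\mathrm{Signal}\gg\log K$.

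\emph{Main obstacle.} The hardest step is the sharp constant in the exponent, i.e.\ the lower bound on $\alpha+\beta'$ at distance $m$ for $K\ge3$. This is where $\beta<\sqrt{5/3}$ and the precise $\lambda$ enter. It also requires a preliminary consistency step: one must first show, via a crude bound using the slack only in the form $t^\star s_n$, that $\hat\sigma$ has small mismatch w.h.p., so that the sharp local counting applies.

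I would handle both by importing Zhang--Zhou's Lemmas verbatim and noting that they use the optimality of the MLE only through $T_A(\hat\sigma)\ge T_A(\sigma_0)$. Replacing that inequality by its $s_n$-relaxed version multiplies every probability by $e^{t^\star s_n}$. The failure probability of the consistency step is then bounded by $e^{t^\star s_n}\exp(-(1+o(1))\mathrm{Signal})$ as well.
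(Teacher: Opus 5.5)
Your architecture is the paper's. You use the inclusion of $\{d(\hat\sigma,\sigma_0)=m\}$ in the event that some distance-$m$ labeling satisfies $T_A(\sigma)\ge T_A(\sigma_0)-s_n$. You apply the Chernoff bound at $t^\star$, which turns the slack into a factor $e^{t^\star s_n}$. You import the split/merge and orbit-counting estimates from Zhang--Zhou.

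The gap is in the summation. You claim that $nI/(K\log K)\to\infty$ makes $\sum_m \frac{m}{n}(enK/m)^m e^{-(1+o(1))m\,\mathrm{Signal}}$ dominated by its $m=1$ term. But the ratio of consecutive terms is of order $nKe^{-\mathrm{Signal}}/m$, so dominance requires at least $\mathrm{Signal}-\log(nK)\to\infty$. The hypothesis only gives $\log K=o(\mathrm{Signal})$ and says nothing about $\log n$. For instance, $K=2$ with $nI=\sqrt{\log n}$ is allowed. There the $m=2$ term is already of order $n e^{-(1+o(1))nI}\to\infty$, and the terms keep growing until $m$ is of order $nKe^{-\mathrm{Signal}}$. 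So the union bound is vacuous, while the claimed bound (say with $s_n=0$) tends to zero.

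The missing idea is the truncation in Cases 2--3 of Lemma~\ref{lem:k2-layer-summation}. It is also part of the Zhang--Zhou case analysis the paper invokes for $K\ge 3$. Write $r(\sigma_0,\hat\sigma)\le m_0/n+\mathbf{1}\{d(\hat\sigma,\sigma_0)>m_0\}$ with $m_0\approx n\exp\{-(1-o(1))\mathrm{Signal}\}$. Apply the slack-inflated union bound only to layers $m>m_0$, where $(enK/m)e^{-\mathrm{Signal}}$ is small and the tail is $o(m_0/n)$. Then use $m_0/n=\exp\{-(1+o(1))\mathrm{Signal}\}\le e^{t^\star s_n}\exp\{-(1+o(1))\mathrm{Signal}\}$. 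As written, your argument covers only $\mathrm{Signal}\ge(1+c)\log(nK)$ for a constant $c>0$. That is enough for the use in Theorem~\ref{thm:private_rate} under Assumption~\ref{ass:strong}, but not for the lemma as stated.

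Smaller points:
\begin{itemize}
\item The identity is $\mathbb E\,r=\frac1n\sum_m m\,\mathbb P(nr=m)$; you drop the factor $m$, harmlessly.
\item For $K\ge 3$ and general $w$, the per-labeling exponent is $I(\alpha\wedge\gamma)$, so what you need is Lemma~\ref{lem:split-merge-lower-bound} on the minimum. Your bound $\alpha+\beta'\ge(1-o(1))mn/(\beta K)$, fed into $e^{-\frac{I}{2}(\alpha+\beta')}$, would lose a factor $2$.
\item Part (2) of that lemma handles $m>n/(2\beta K)$ directly, so no separate consistency step is needed.
\item The union bound should run over permutation orbits rather than labelings. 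An extra $K!$ factor contributes $\log K!\asymp K\log K$ to the exponent, which need not be $o(\mathrm{Signal})$ under the stated hypothesis.
\end{itemize}
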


\begin{proof} Let $S_K$ be the permutation group on $[K]$ and
	\[
	d(\sigma,\sigma_0):=\min_{\pi\in S_K} d_H(\sigma,\pi\circ\sigma_0).
	\]
	For $1\le m\le n-1$, define
	\begin{equation}\label{def:E_m}
	E_m(s_n)
	:=
	\Bigl\{
	\exists\,\sigma\in\Sigma_\beta:\ d(\sigma,\sigma_0)=m
	\text{ and }
	T_A(\sigma)\ge T_A(\sigma_0)-s_n
	\Bigr\}.
	\end{equation}
    
	Because $\sigma_0\in\Sigma_\beta$, \eqref{eq:score-slack-assumption} implies
	\[
	\{d(\hat\sigma,\sigma_0)=m\}\subseteq E_m(s_n).
	\]
	Hence
	\begin{equation}\label{eq:risk-layer-decomp}
		\mathbb E\,\left(r(\sigma_0,\hat\sigma)\right )
		=
		\frac1n\sum_{m=1}^{n-1} m\,\mathbb P\bigl(d(\hat\sigma,\sigma_0)=m\bigr)
		\le
		\frac1n\sum_{m=1}^{n-1} m\,\mathbb P\bigl(E_m(s_n)\bigr).
	\end{equation}
	
	\medskip
	\noindent\textbf{Case 1: $K=2$.}
	Fix $\sigma\in\Sigma_\beta$ with $d(\sigma,\sigma_0)=m$, and let
	$\alpha(\sigma;\sigma_0)$ and $\gamma(\sigma;\sigma_0)$ be the split and merge counts:
	\[
	\alpha(\sigma;\sigma_0)
	:=\big|\{i<j:\ \sigma_0(i)=\sigma_0(j),\ \sigma(i)\neq\sigma(j)\}\big|,
	\]
	\[
	\gamma(\sigma;\sigma_0)
	:=\big|\{i<j:\ \sigma_0(i)\neq\sigma_0(j),\ \sigma(i)=\sigma(j)\}\big|.
	\]
    
    By Lemma \ref{lem:bernoulli-reduction}, there exist independent random variables
	\[
	X_1,\dots,X_{\gamma}\stackrel{\mathrm{iid}}{\sim}\mathrm{Ber}(b/n),
	\qquad
	Y_1,\dots,Y_{\alpha}\stackrel{\mathrm{iid}}{\sim}\mathrm{Ber}(a/n),
	\]
	such that
	\[
	\mathbb P\!\bigl(T_A(\sigma)\ge T_A(\sigma_0)-s_n\bigr)
	\le
	\mathbb P\!\Bigl(
	\sum_{i=1}^{\gamma} X_i-\sum_{i=1}^{\alpha} Y_i
	\ge \lambda(\gamma-\alpha)-s_n
	\Bigr).
	\]
	Applying Chernoff's bound at $t^\star>0$,
	\begin{align*}
		\mathbb P\!\bigl(T_A(\sigma)\ge T_A(\sigma_0)-s_n\bigr)
		&\le
		e^{t^\star s_n}
		\bigl(\mathbb E e^{t^\star X_1}\bigr)^{\gamma}
		\bigl(\mathbb E e^{-t^\star Y_1}\bigr)^{\alpha}
		e^{-t^\star\lambda(\gamma-\alpha)}.
	\end{align*}
	By Lemma~\ref{lem:alpha-gamma-k2}, for $K=2$ we have
\[
\alpha(\sigma;\sigma_0)+\gamma(\sigma;\sigma_0)=m(n-m).
\] 
Moreover, the special choice of $\lambda$ in \eqref{eq:lambdaK2} gives
	\[
	e^{-t^\star\lambda}\mathbb E e^{t^\star X_1}
	=
	e^{t^\star\lambda}\mathbb E e^{-t^\star Y_1}
	=
	\Bigl(\mathbb E e^{t^\star X_1}\mathbb E e^{-t^\star Y_1}\Bigr)^{1/2}
	=
	e^{-I/2}.
	\]
	Therefore
	\begin{equation}\label{eq:k2-fixed-sigma}
		\mathbb P\!\bigl(T_A(\sigma)\ge T_A(\sigma_0)-s_n\bigr)
		\le
		e^{t^\star s_n}\exp\!\Bigl(-\frac{m(n-m)I}{2}\Bigr).
	\end{equation}
    By Lemma~\ref{lem:k2-layer-summation} \eqref{eq:k2-fixed-sigma}
for every $\sigma$ with $d(\sigma,\sigma_0)=m$ implies
\[
\sup_{\Theta(n,2,a,b,\beta)} \mathbb E\,r(\sigma_0,\hat\sigma)
\le
\exp\!\left\{-\Bigl(1+o(1)\Bigr)\frac{nI}{2}+t^\star s_n\right\}.
\]

	\medskip
	\noindent\textbf{Case 2: $K\ge 3$.}
	Fix $\sigma\in\Sigma_\beta$ with $d(\sigma,\sigma_0)=m$. By Lemma~\ref{lem:chernoff_comp},
	\begin{equation}\label{eq:kg3-fixed-sigma}
		\mathbb P\!\bigl(T_A(\sigma)\ge T_A(\sigma_0)-s_n\bigr)
		\le
		e^{t^\star s_n}
		\exp\!\Bigl(
		-I\bigl(\alpha(\sigma;\sigma_0)\wedge \gamma(\sigma;\sigma_0)\bigr)
		\Bigr).
	\end{equation}
    Now apply Lemma~\ref{lem:split-merge-lower-bound}. For
$m=d(\sigma,\sigma_0)$ it yields
\[
\alpha(\sigma;\sigma_0)\wedge \gamma(\sigma;\sigma_0)\ge
\begin{cases}
\dfrac{nm}{\beta K}-m^2, & m\le \dfrac{n}{2\beta K},\\[1ex]
c_\beta \dfrac{nm}{K}, & m> \dfrac{n}{2\beta K}.
\end{cases}
\]
For \(m\in\{1,\dots,n-1\}\), define
\[
q_m:=
\begin{cases}
\exp\!\left\{-I\left(\frac{nm}{\beta K}-m^2\right)\right\},
    & m\le \frac{n}{2\beta K},\\[1ex]
\exp\!\left\{-I c_\beta \frac{nm}{K}\right\},
    & m> \frac{n}{2\beta K}.
\end{cases}
\]
For $\sigma:[n]\to[K]$, define the orbit
\begin{equation}\label{def:Gamma}
    \Gamma(\sigma):=\{\pi\circ \sigma:\ \pi\in S_K\}\quad\text{and}\quad d(\Gamma,\sigma_0):=\min_{\sigma\in\Gamma} d(\sigma,\sigma_0)
\end{equation}
Since $T_A(\sigma)$ depends on $\sigma$ only through the relation
$\mathbf 1\{\sigma(i)=\sigma(j)\}$, we have
\[
T_A(\pi\circ\sigma)=T_A(\sigma)\qquad\forall \pi\in S_K,
\]
so $T_A$ is constant on each orbit.

For $m\ge 0$, let
\begin{equation}\label{def:G_m}
\mathcal G_m:=\{\Gamma(\sigma):\ \sigma\in\Sigma_\beta,\ d(\Gamma,\sigma_0)=m\}.
\end{equation}
We have that every orbit representative \(\sigma_\Gamma\) with
\(d(\sigma_\Gamma,\sigma_0)=m\) satisfies
\begin{equation}\label{eq:bound_exp_signal}
    \Pr\!\bigl(T_A(\sigma_\Gamma)\ge T_A(\sigma_0)-s_n\bigr)
\le e^{t^\star s_n} q_m
\end{equation}
which yields using Lemma \ref{lem:orbit_counting}
\[
\mathbb E r(\sigma_0,\hat\sigma)
\le \frac{e^{t^\star s_n}}{n}\sum_{m=1}^{n-1} m\, | \mathcal G_m|\, q_m.
\]
The sum on the right-hand side is exactly the non-slack layer sum analyzed in
the proof of \cite[Theorem~3.2]{Zhang2016Minimax}. Hence the same case analysis
gives
\[
\mathbb E r(\sigma_0,\hat\sigma)
\le e^{t^\star s_n}\exp\!\left\{-(1+o(1))\frac{nI}{\beta K}\right\}.
\]
Combining the two cases proves the lemma.
\end{proof}

\subsection{Sensitivity on a high-probability degree envelope}

\begin{lemma}[Degree envelope holds with high probability]\label{lem:degree-envelope}
	There exists $C>0$ large enough such that, for all $n$,
	\[
	\mathbb P\bigl(A\in\mathcal G_C\bigr) \;\ge\; 1-\exp(-10(a+\log n)).
	\]
\end{lemma}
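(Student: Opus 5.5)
The plan is a Chernoff bound on each vertex degree followed by a union bound over the $n$ vertices. Fix $i\in[n]$. Conditional on $\sigma_0$, $\deg_A(i)=\sum_{j\neq i}A_{ij}$ is a sum of $n-1$ independent Bernoulli variables with parameters in $\{a/n,b/n\}$. Since $b<a$, it is stochastically dominated by $\mathrm{Bin}(n,a/n)$, whose mean is $a$. Set $D:=\max\{a,\log n\}$, so that $\mathcal G_C=\{d_{\max}(A)\le CD\}$.

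For the tail bound, apply the exponential moment method with $t=1$. Using $1+x\le e^x$,
\[
\mathbb E e^{\deg_A(i)}\le \Bigl(1+\tfrac an(e-1)\Bigr)^{n}\le e^{(e-1)a}.
\]
Markov's inequality then gives
\[
\mathbb P\bigl(\deg_A(i)> CD\bigr)\le \exp\bigl((e-1)a-CD\bigr).
\]
Equivalently one can use the standard bound $\mathbb P(\mathrm{Bin}\ge x)\le (e\mu/x)^x$ for $x\ge e^2\mu$.

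Next, split $CD$ into pieces to get the target exponent. Since $D\ge a$ and $D\ge \log n$, we have $D\ge \tfrac12(a+\log n)$. Hence
\[
CD-(e-1)a \ge \Bigl(\tfrac{C}{2}-(e-1)\Bigr)a+\tfrac C2\log n .
\]
Taking, say, $C=30$ gives $CD-(e-1)a\ge 11(a+\log n)$. Therefore
\[
\mathbb P\bigl(\deg_A(i)>CD\bigr)\le e^{-11(a+\log n)} .
\]

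Finally, take a union bound over the $n$ vertices:
\[
\mathbb P(A\notin\mathcal G_C)\le n\,e^{-11(a+\log n)}=e^{-10(a+\log n)}\,e^{-a}\le e^{-10(a+\log n)} .
\]
This holds for all $n$, with no asymptotic assumption, because all the bounds above are non-asymptotic.

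The only real subtlety is handling the regime $a\ll\log n$, where the mean degree is too small for a relative-deviation bound to give a polynomially small tail. The choice $t=1$ together with the term $\log n$ inside $D$ handles this uniformly, so no case split is needed.
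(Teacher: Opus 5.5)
Your proof is correct and follows essentially the same route as the paper: a per-vertex exponential tail bound for the degree followed by a union bound over the $n$ vertices, with the $\log n$ inside $\max\{a,\log n\}$ absorbing the union-bound factor. The only difference is that you use an uncentered Chernoff bound at fixed tilt $t=1$, which gives the explicit constant $C=30$, whereas the paper applies Bernstein's inequality to the centered degree with an unspecified large $c_1$.
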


\begin{proof}
	Fix a node $i$. Its degree is a sum of independent Bernoulli variables with means at most $a/n$ (within-community) or $b/n$ (across-community). Hence
	\[
	\mathbb E[\deg_A(i)]
	\le \frac{\beta n}{K}\cdot\frac{a}{n}
	+\Bigl(n-\frac{n}{\beta K}\Bigr)\cdot\frac{b}{n}
	\le a+b \le 2a.
	\]
	Bernstein's inequality yields, for any $t>0$,
	\[
	\mathbb P\bigl(\deg_A(i)-\mathbb E[\deg_A(i)]\ge t\bigr)
	\le
	\exp\!\left(-\frac{t^2}{4a+2t/3}\right).
	\]
	Take $t=c_1\max\{a,\log n\}$ with $c_1>0$ large enough so that the right-hand side is at most $\exp(-11(a+\log n))$ uniformly in $i$. A union bound over $i\in[n]$ gives
	\[
	d_{\max}(A)\le \mathbb E[\deg_A(i)]+t\le 2a+t\le C\max\{a,\log n\}
	\]
	with probability at least $1-\exp(-10(a+\log n))$.
\end{proof}
\begin{lemma}[Lipschitz bound on $\mathcal G_C$]
\label{lem:sensitivity}
For every $\sigma \in \Sigma_\beta$ and all $A,A' \in \mathcal G_C$,
\[
|T_A(\sigma)-T_{A'}(\sigma)| \le \Delta_a\, d_v(A,A').
\]
In particular, if $A \sim_v A'$ and $A,A' \in \mathcal G_C$, then
\[
|T_A(\sigma)-T_{A'}(\sigma)| \le \Delta_a.
\]
\end{lemma}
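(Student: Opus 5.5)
The plan is to bypass paths in the node metric, because intermediate graphs on a shortest path may leave $\mathcal G_C$. Instead I will compare $A$ and $A'$ directly through the set of vertices whose neighborhoods differ. The first step is a structural description of $d_v$. If $d_v(A,A')=m$, then there is a set $S\subseteq[n]$ with $|S|\le m$ such that $A_{ij}=A'_{ij}$ for every pair $\{i,j\}$ with $\{i,j\}\cap S=\varnothing$.

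To see this, take a path $A=A^{(0)}\sim_{v_1}A^{(1)}\sim_{v_2}\cdots\sim_{v_m}A^{(m)}=A'$ realizing the distance. Set $S=\{v_1,\dots,v_m\}$. Each step only changes entries in pairs incident to $v_t$. Hence any pair disjoint from $S$ is never modified along the path. Note that this uses only the path itself, not its intermediate graphs, so membership of the $A^{(t)}$ in $\mathcal G_C$ is irrelevant.

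The second step is to write the score difference. The penalty term $-\lambda\sum_{i<j}\1\{\sigma(i)=\sigma(j)\}$ does not depend on the graph, so it cancels:
\[
T_A(\sigma)-T_{A'}(\sigma)=\sum_{i<j}(A_{ij}-A'_{ij})\,\1\{\sigma(i)=\sigma(j)\}.
\]
By the first step, only pairs meeting $S$ contribute. Using $|A_{ij}-A'_{ij}|\le A_{ij}+A'_{ij}$ and dropping the indicator gives
\[
|T_A(\sigma)-T_{A'}(\sigma)|\le \sum_{\{i,j\}\cap S\neq\varnothing}(A_{ij}+A'_{ij})\le \sum_{v\in S}\bigl(\deg_A(v)+\deg_{A'}(v)\bigr).
\]
The last inequality holds because every pair meeting $S$ is counted at least once in the degree sum of some $v\in S$; pairs inside $S$ are counted twice, which only helps the upper bound.

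The third step applies the envelope. Since both $A$ and $A'$ lie in $\mathcal G_C$, each degree is at most $C\max\{a,\log n\}$. The right-hand side is therefore at most $|S|\cdot 2C\max\{a,\log n\}\le \Delta_a\,d_v(A,A')$, using \eqref{eq:Delta}. The case $A\sim_v A'$ is the special case $m=1$, with $S=\{v\}$.

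The main point of care is exactly this avoidance of a triangle inequality along the path. A naive argument would bound each step by $\Delta_a$, which requires every intermediate graph to lie in $\mathcal G_C$, and that may fail. Using only the endpoint degrees of $A$ and $A'$ on the changed vertex set $S$ removes the issue. The remaining ingredients, the cancellation of the $\lambda$ term and the degree counting, are routine.
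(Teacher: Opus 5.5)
Your proof is correct and follows the paper's route. You cancel the $\lambda$-penalty, pass to a vertex set of size at most $d_v(A,A')$ outside of which $A$ and $A'$ agree, and bound the difference by $\sum_{v\in U}(\deg_A(v)+\deg_{A'}(v))\le \Delta_a\,d_v(A,A')$ using only the degrees of the two endpoint graphs. Your construction of this set as the vertices rewired along a shortest path justifies a step the paper only asserts, and your remark that this avoids needing intermediate graphs to lie in $\mathcal G_C$ is the right point of care.
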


\begin{proof}
Write
\[
S_A(\sigma):=\sum_{i<j} A_{ij}\mathbf{1}\{\sigma(i)=\sigma(j)\}.
\]
Since the penalty term in \eqref{eq:T} does not depend on $A$, it suffices to bound
$|S_A(\sigma)-S_{A'}(\sigma)|$.

Let $U \subseteq [n]$ be such that $|U|=d_v(A,A')$ and $A_{ij}=A'_{ij}$ whenever $i,j \notin U$.
Then only edges incident to $U$ can contribute to the difference, so
\[
|S_A(\sigma)-S_{A'}(\sigma)|
\le
\sum_{v\in U}\deg_A(v)+\sum_{v\in U}\deg_{A'}(v).
\]
Since $A,A' \in \mathcal G_C$, each degree is at most $C\max\{a,\log n\}$, and therefore
\[
|S_A(\sigma)-S_{A'}(\sigma)|
\le
2|U|\,C\max\{a,\log n\}
=
\Delta_a\, d_v(A,A').
\]
This proves the claim.
\end{proof}
\subsection{EM utility via peeling}

For $s\ge 0$ define the near-optimal level set
\[
S_s(A):=\{\sigma\in\Sigma_\beta:\ T_A(\sigma)\ge \max_{\tau\in\Sigma_\beta}T_A(\tau)-s\}.
\]
When the EM is run with privacy budget $\varepsilon_0=\varepsilon/2$ and sensitivity $\Delta$, write
\[
\eta:=\frac{\varepsilon_0}{2\Delta}.
\]
The next lemmas reduce the near-optimality of $T_A$ to controlling the size of $S_s(A)$, then control the size of $S_s(A)$, and then translate that control into a high-probability utility guarantee for the EM.

\begin{lemma}[Peeling inequality]\label{lem:peeling}
	For any $s>0$ and any adjacency matrix $A$,
	\[
	\mathbb P\!\left( T_A(\hat\sigma) \le \max_{\tau \in\Sigma_\beta}T_A(\tau) - s \,\middle|\, A\right)
	\;\le\; \sum_{\ell\ge 1} |S_{\ell s}(A)|\, e^{-\eta \ell s}.
	\]
\end{lemma}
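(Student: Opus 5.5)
The plan is to use the explicit Gibbs form of the restricted-domain mechanism \eqref{eq:em}. Write $M_A:=\max_{\tau\in\Sigma_\beta}T_A(\tau)$ and define the gap $g(\sigma):=M_A-T_A(\sigma)\ge0$, so that $S_t(A)=\{\sigma\in\Sigma_\beta: g(\sigma)\le t\}$. Since $\varepsilon_0/(2\Delta_a)=\eta$, conditional on $A$ we have $\mathbb P(\hat\sigma=\sigma\mid A)=e^{\eta T_A(\sigma)}/Z_A$.

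\textbf{Step 1 (pointwise weight bound).} The partition function contains the term of a maximizer, so $Z_A\ge e^{\eta M_A}$. Hence every labeling satisfies
\[
\mathbb P(\hat\sigma=\sigma\mid A)\le e^{-\eta g(\sigma)} .
\]

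\textbf{Step 2 (shells).} The event in the statement is $\{g(\hat\sigma)\ge s\}$. I would partition it into the shells
\[
R_\ell:=\{\sigma\in\Sigma_\beta:\ \ell s\le g(\sigma)<(\ell+1)s\},\qquad \ell\ge1 .
\]
On $R_\ell$ the weight bound gives $\mathbb P(\hat\sigma=\sigma\mid A)\le e^{-\eta\ell s}$, and therefore
\[
\mathbb P(g(\hat\sigma)\ge s\mid A)\le\sum_{\ell\ge1}|R_\ell|\,e^{-\eta\ell s}.
\]

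\textbf{Step 3 (comparing shells with level sets) --- the main obstacle.} To reach the displayed form one would want $|R_\ell|\le|S_{\ell s}(A)|$. This fails: $R_\ell$ is disjoint from $S_{\ell s}(A)$ apart from the boundary $\{g=\ell s\}$. The only containment available is $R_\ell\subseteq S_{(\ell+1)s}(A)$, which produces the index-shifted sum and not the stated one. I first tried several repairs, and none of them closes this one-index gap in general:
\begin{itemize}
\item Abel summation of $|S_{(\ell+1)s}|-|S_{\ell s}|$ loses a factor $e^{\eta s}$.
\item The layer-cake identity $e^{-\eta g}=\int_g^\infty\eta e^{-\eta t}\,dt$, bounded shell by shell, loses a factor $e^{\eta s}-1$.
\item Lower-bounding $Z_A$ by the mass of $S_{\ell s}(A)$ again only gives $|R_\ell|e^{-\eta\ell s}$.
\end{itemize}

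A concrete test suggests the gap is genuine. Take one optimum and $N$ labelings at gap $1.5s$, and suppose nothing else lies in $S_{2s}(A)$. Then $|S_{\ell s}(A)|=1$ for $\ell=1$, and $|S_{\ell s}(A)|\le N+1$ for $\ell\ge2$ (terms with $\ell\ge3$ are further suppressed by $e^{-\eta\ell s}$). The left-hand side is about
\[
\frac{Ne^{-1.5\eta s}}{1+Ne^{-1.5\eta s}} .
\]
With $N\asymp e^{1.5\eta s}$ this is of constant order, while the right-hand side is of order $e^{-\eta s}+Ne^{-2\eta s}\asymp e^{-\eta s/2}$. So as literally written the inequality cannot be derived from the Gibbs structure alone, assuming such configurations of scores can occur. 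That would need checking, since $T_A$ has special structure; a proof of the literal claim would have to exploit it, and I have not found how.

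My concrete plan is therefore to establish Steps 1--2 rigorously. I would then record the valid conclusion $\mathbb P(g(\hat\sigma)\ge s\mid A)\le\sum_{\ell\ge1}|S_{(\ell+1)s}(A)|e^{-\eta\ell s}$, or equivalently the stated sum with $e^{-\eta(\ell-1)s}$ in place of $e^{-\eta\ell s}$. Finally I would check that the downstream utility bound (Lemma~\ref{lem:em-utility}) only uses $|S_t(A)|\le e^{C\log(nK)+Bt}$. Under that bound the extra factor $e^{\eta s}$ or $e^{Bs}$ is absorbed, and the geometric series in $e^{-(\eta-B)\ell s}$, which is summable because $\gamma_0=\eta-B>0$, remains intact. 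The conclusions of Theorem~\ref{thm:private_rate} would then be unaffected up to the constants in $s^\star$.
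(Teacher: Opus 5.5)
Your Steps 1 and 2 are exactly the paper's argument. Its layers $\mathcal L_\ell(A;s)$, defined by $T_A(\sigma)\in(T^\star-(\ell+1)s,\,T^\star-\ell s]$, are your shells $R_\ell$, and it uses the same bound $Z_A\ge e^{\eta T^\star}$ to reach $\sum_{\ell\ge1}|\mathcal L_\ell|e^{-\eta\ell s}$. The paper then closes with $|\mathcal L_\ell|\le|S_{\ell s}(A)|$, which is precisely the inequality you reject, and you are right to reject it. Every labeling in $\mathcal L_\ell$ has gap in $[\ell s,(\ell+1)s)$, so only $\mathcal L_\ell\subseteq S_{(\ell+1)s}(A)$ is available. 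The paper's proof therefore yields your index-shifted bound, not the displayed one, and your test configuration correctly shows that the displayed inequality cannot come from the Gibbs form alone.

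Your worry that the structure of $T_A$ does not rescue the literal claim also appears justified, since the lemma is asserted for every $A$. A sketch (not fully checked): take $K=2$ and let $A$ be the disjoint union of two spectral expanders of degree $d\gg\log^2 n$ on the two halves. Every single-vertex move away from the planted partition then costs exactly $d+\lambda$. Taking $\eta=\log n/(d+\lambda)$ and $s=d+\lambda-c\,d/\log n$ for a small constant $c$ seems to realize your configuration, with left-hand side at least $1/2$ and right-hand side $O(1/n)$. Expansion should keep the remaining labelings above the corresponding multiples of $s$, or give them negligible weight.

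Two points in your repair need fixing. First, the sum with $e^{-\eta(\ell-1)s}$ is not equivalent to the shifted one. It contains the extra term $\ell=1$, which equals $|S_s(A)|\ge1$, so that version is vacuous. Second, inserting $|S_t(A)|\le(nK)^{C_1}e^{Bt}$ into $\sum_{\ell\ge1}|S_{(\ell+1)s}(A)|e^{-\eta\ell s}$ gives $(nK)^{C_1}e^{Bs}e^{-\gamma_0 s}/(1-e^{-\gamma_0 s})$. This decays at rate $\eta-2B$, not $\gamma_0=\eta-B$. The factor $e^{Bs}$ is absorbed only if you strengthen the feasibility condition to $\eta>2B$ (i.e.\ double $C_0$), and a factor $e^{\eta s}$ cannot be absorbed at all.

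A cleaner route is the layer-cake identity you mention, applied against $|S_t(A)|$ rather than shell by shell. With your gap $g(\sigma)=\max_\tau T_A(\tau)-T_A(\sigma)$,
\[
\mathbb P\bigl(g(\hat\sigma)\ge s\mid A\bigr)\le\sum_{\sigma:\,g(\sigma)\ge s}e^{-\eta g(\sigma)}\le\int_s^\infty \eta e^{-\eta t}\,|S_t(A)|\,dt\le (nK)^{C_1}\,\frac{\eta}{\gamma_0}\,e^{-\gamma_0 s}.
\]
This keeps $\gamma_0=\eta-B$. When $\eta\ge 2B$ (the regime of the simplified bound), $\eta/\gamma_0\le 2$, so $s=s^\star$ with exactly the paper's $s^\star$ gives probability at most $\alpha/2$. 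In general one only adds $\log(\eta/\gamma_0)$ to the numerator of $s^\star$. With this, Lemma~\ref{lem:em-utility} and Theorem~\ref{thm:private_rate} go through up to constants, as you anticipated.
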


\begin{proof}
	Let $T^\star= \max_{\tau\in\Sigma_\beta} T_A(\tau)$ and partition labelings into layers
	\[
	\mathcal L_\ell(A;s):=\{\sigma \in\Sigma_\beta: T_A(\sigma)\in(T^\star-(\ell+1)s,\ T^\star-\ell s]\},
	\qquad \ell\ge 0.
	\]
	Using \eqref{eq:em}  and the bound $\sum_\tau e^{\eta T_A(\tau)}\ge e^{\eta T^\star}$,
	\[
	\mathbb P\big(T_A(\hat\sigma)\le T^\star-s\mid A\big)
	\le \sum_{\ell\ge 1}\sum_{\sigma\in \mathcal L_\ell}e^{\eta(T_A(\sigma)-T^\star)}
	\le \sum_{\ell\ge 1}|\mathcal L_\ell|\,e^{-\eta \ell s}
	\le \sum_{\ell\ge 1}|S_{\ell s}(A)|\,e^{-\eta \ell s}.
	\]
\end{proof}
\begin{lemma}[Bound on near-optimal sets]\label{lem:nearoptimal}
 Assume Assumptions~\ref{ass:strong} and~\ref{ass:mildK}.
There exist absolute constants $c_0,C_0,C_1>0$ such that, with probability at least
\[
1-\exp\{-c_0\,\mathrm{Signal}\},
\]
one has, for all $s\ge 0$,
\begin{equation}\label{eq:lem_near-optimal}
\log |S_s(A)| \le C_0 \frac{K\log(nK)}{nI}\, s + C_1 \log(nK).
\end{equation}
\end{lemma}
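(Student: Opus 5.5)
The approach is to bound $|S_s(A)|$ by counting orbits layer by layer in the distance $m=d(\sigma,\sigma_0)$. On a high-probability event, every labeling at distance $m$ from $\sigma_0$ should have score deficit roughly proportional to the signal it destroys. Concretely, I would show that with probability at least $1-e^{-c_0\mathrm{Signal}}$, simultaneously for all $m\ge1$ and all $\sigma\in\Sigma_\beta$ with $d(\sigma,\sigma_0)=m$,
\[
T_A(\sigma_0)-T_A(\sigma)\;\ge\; c\,\frac{nI}{K}\,m \;-\; C'm\log(nK).
\]
Granting this, since $\max_\tau T_A(\tau)\ge T_A(\sigma_0)$, every $\sigma\in S_s(A)$ satisfies $c\,nIm/K\le s+C'm\log(nK)$. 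Assumption~\ref{ass:strong} (with $C_s$ large, so that $nI/K\gtrsim \mathrm{Signal}\ge C_s\log(nK)$) lets the $C'm\log(nK)$ term be absorbed, giving $m\le C\,Ks/(nI)$.

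Counting then gives the claim. The number of labelings at distance at most $m$ from the orbit of $\sigma_0$ is at most $K!\binom{n}{m}K^m\le \exp\{K\log K+m\log(nK)\}$. By Assumption~\ref{ass:mildK}, $K\log K\lesssim\log(nK)$. Hence
\[
\log|S_s(A)|\le C\,\frac{Ks}{nI}\log(nK)+C_1\log(nK),
\]
which is \eqref{eq:lem_near-optimal} with $C_0$ absorbing the constants. The count includes $m=0$, which accounts for the $C_1\log(nK)$ floor at $s=0$.

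For the uniform deficit bound I would reuse the Chernoff computation from Lemma~\ref{lem:risk-with-score-slack}, but with the slack chosen $\sigma$-dependent: $s_n=\tfrac12(\text{expected-type gap})$ instead of a fixed deterministic slack. For fixed $\sigma$ at distance $m$, \eqref{eq:k2-fixed-sigma} and \eqref{eq:kg3-fixed-sigma} give
\[
\Pr\bigl(T_A(\sigma)\ge T_A(\sigma_0)-x\bigr)\le e^{t^\star x-I(\alpha\wedge\gamma)}.
\]
Lemma~\ref{lem:split-merge-lower-bound} (and Lemma~\ref{lem:alpha-gamma-k2} when $K=2$) gives $\alpha\wedge\gamma\gtrsim nm/K$ for all $m$ after passing to the best orbit representative. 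Choosing $x=\tfrac{1}{2t^\star}I(\alpha\wedge\gamma)$, and using $t^\star=\Theta(1)$, each such event has probability at most $e^{-c\,nIm/K}$. A union bound over the at most $e^{K\log K+m\log(nK)}$ labelings in layer $m$, summed over $m$, yields failure probability
\[
\sum_m e^{-m(c\,nI/K-\log(nK))+K\log K}\le e^{-c_0\mathrm{Signal}},
\]
again by the two assumptions. This means the subtracted $C'm\log(nK)$ term is not even needed; it only adds room.

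The main obstacle is the small-$m$ regime for $K\ge3$, where the bound from Lemma~\ref{lem:split-merge-lower-bound} is $nm/(\beta K)-m^2$, and the case $m>n/(2\beta K)$, where the bound is $c_\beta nm/K$. I must check that in both regimes $\alpha\wedge\gamma\ge c\,nm/K$ with $c$ depending only on $\beta$, which is bounded since $\beta<\sqrt{5/3}$. I also need the orbit-minimizing representative so that distances are measured correctly. A secondary check is the first layer $m=1$: there the exponent $c\,nI/K-\log(nK)$ must be a constant multiple of $\mathrm{Signal}$. This is exactly where $C_s$ large is used.
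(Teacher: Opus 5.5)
Your proposal is correct and is essentially the paper's argument. Both use Chernoff bounds with slack for each labeling, then a union bound over each distance layer to get a single event on which every labeling at distance $m$ from the orbit of $\sigma_0$ has score deficit growing linearly in $m$, and finally count the labelings within distance $O(1+Ks/(nI))$ of that orbit. The only real difference is how the slack is chosen. You spend half of each Chernoff exponent as a $\sigma$-dependent slack, which gives a deficit with no offset and a single geometric series. The paper instead uses the $m$-dependent slack $s_m=\max\{0,(\delta m-c_\alpha\mathrm{Signal})/t^\star\}$ and pays an extra $n e^{-c_\alpha\mathrm{Signal}}$ in the failure probability.

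One small correction: for $K=2$, Lemma~\ref{lem:alpha-gamma-k2} only gives $\alpha+\gamma=m(n-m)$, not a lower bound on $\alpha\wedge\gamma$. Use \eqref{eq:k2-fixed-sigma} directly instead; its exponent satisfies $m(n-m)I/2\ge nmI/4$ (since $m\le n/2$), which serves the same purpose.
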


\begin{proof}
For $s\ge 0$, define
\[
\widetilde S_s(A):=\{\sigma\in\Sigma_\beta:\ T_A(\sigma)\ge T_A(\sigma_0)-s\}.
\]
Since $\max_{\tau\in\Sigma_\beta}T_A(\tau)\ge T_A(\sigma_0)$, we have
\[
S_s(A)\subseteq \widetilde S_s(A),
\]
so it suffices to bound $|\widetilde S_s(A)|$.

\medskip\noindent\textbf{Step 1: One-layer probability bound.}
Fix $m\ge 1$, $\Gamma\in\mathcal G_m$ where $\Gamma$ and $\mathcal G_m$ are defined in \eqref{def:Gamma} and \eqref{def:G_m}. We choose a representative $\sigma_\Gamma\in\Gamma$
such that
\[
d(\sigma_\Gamma,\sigma_0)=m.
\]
Since $T_A$ is constant on $\Gamma$,
\[
\Pr\!\left(\exists \sigma\in\Gamma:\ T_A(\sigma)\ge T_A(\sigma_0)-s\right)
=
\Pr\!\left(T_A(\sigma_\Gamma)\ge T_A(\sigma_0)-s\right).
\]
We have that there exists an absolute constant $c_{\rm test}>0$ such that
for every $s\ge 0$,
\begin{equation}\label{eq:single-layer-bound}
\Pr\!\left(T_A(\sigma_\Gamma)\ge T_A(\sigma_0)-s\right)
\le
\exp\{-c_{\rm test}\,m\,\mathrm{Signal}+t^\star s\}.
\end{equation}
Indeed, for $K\geq 3$, \eqref{eq:single-layer-bound} follows from \eqref{eq:bound_exp_signal} and 
for $K=2$, we have  $m\le n/2$ and  $n-m\ge n/2$, which implies
\[
\frac{m(n-m)I}{2}\ge \frac{mnI}{4}=\frac{m\,\mathrm{Signal}}{2}.
\]
Plugging this bound into \eqref{eq:k2-fixed-sigma}, \eqref{eq:single-layer-bound} holds with $c_{\rm test}=1/2$.

Now combine \eqref{eq:single-layer-bound} with the class count
\eqref{eq:class_count}: for every $m\ge 1$ and every $s\ge 0$,
\begin{align}
&\Pr\!\left(\exists \Gamma\in\mathcal G_m,\ \exists \sigma\in\Gamma:\ 
T_A(\sigma)\ge T_A(\sigma_0)-s\right) \nonumber\\
&\qquad\le
|\mathcal G_m|\,\exp\{-c_{\rm test}m\,\mathrm{Signal}+t^\star s\}\nonumber\\
&\qquad\le
\exp\{-\delta m+t^\star s\},
\label{eq:layer_bound_unified}
\end{align}
where
\[
\delta:=c_{\rm test}\,\mathrm{Signal}-\log(enK).
\]
By Assumption~\ref{ass:strong} $\mathrm{Signal}\ge C_s\log(nK)$ for $C_s$ large enough in both cases
($K=2$ and $K\ge 3$), so after enlarging $C_s$ if necessary we may assume
\[
\delta\ge c_\delta\,\mathrm{Signal}
\]
for some absolute constant $c_\delta>0$.

\medskip\noindent\textbf{Step 2: build one high-probability event for all $s\ge 0$.}
Fix a large absolute constant $c_\alpha>0$ and define
\[
\alpha_0:=c_\alpha \frac{\mathrm{Signal}}{\log(nK)}.
\]
For each integer $m\ge 1$, set
\[
s_m:=\max\left\{0,\ \frac{\delta m-\alpha_0\log(nK)}{t^\star}\right\}.
\]
Let
\[
\mathcal E:=
\bigcap_{m=1}^{n}
\left\{
\nexists \Gamma\in\mathcal G_m,\ \exists \sigma\in\Gamma:
\ T_A(\sigma)\ge T_A(\sigma_0)-s_m
\right\}.
\]
By \eqref{eq:layer_bound_unified} with $s=s_m$,
\[
\Pr(\mathcal E^c)\le \sum_{m=1}^{n} e^{-\delta m+t^\star s_m}.
\]
Let
\[
m_0:=\left\lceil \frac{\alpha_0\log(nK)}{\delta}\right\rceil.
\]
If $m\le m_0$, then $s_m=0$ and the summand is $e^{-\delta m}$.
If $m>m_0$, then $t^\star s_m=\delta m-\alpha_0\log(nK)$ and the summand is
$e^{-\alpha_0\log(nK)}$. Hence
\[
\Pr(\mathcal E^c)
\le
\sum_{m=1}^{m_0} e^{-\delta m}
+
\sum_{m=m_0+1}^{n} e^{-\alpha_0\log(nK)}
\le
2e^{-\delta}+n\,e^{-c_\alpha \mathrm{Signal}}.
\]
Since $\mathrm{Signal}\gtrsim \log(nK)\gtrsim \log n$, choosing $c_\alpha$ large enough yields
\[
\Pr(\mathcal E^c)\le e^{-c_0\mathrm{Signal}}
\]
for some absolute constant $c_0>0$.

\medskip\noindent\textbf{Step 3: on $\mathcal E$, all near-optimal labelings are close to the orbit of $\sigma_0$.}
Fix $s\ge 0$ and suppose $A\in\mathcal E$. Let $\sigma\in\widetilde S_s(A)$, and set
$m:=d(\sigma,\sigma_0)$. Then
\[
T_A(\sigma)\ge T_A(\sigma_0)-s,
\]
so by the definition of $\mathcal E$ we must have $s\ge s_m$. Therefore
\[
\delta m-\alpha_0\log(nK)\le t^\star s,
\]
and hence
\[
m\le m_\star(s):=
\min\left\{
n,\,
\left\lceil \frac{t^\star s+\alpha_0\log(nK)}{\delta}\right\rceil
\right\}.
\]

\medskip\noindent\textbf{Step 4: count the near-optimal labelings.}
On $\mathcal E$,
\[
|S_s(A)|\le |\widetilde S_s(A)|
\le
\sum_{m=0}^{m_\star(s)} |\{\sigma:\ d(\sigma,\sigma_0)=m\}|
\le
\sum_{m=0}^{m_\star(s)} \sum_{\Gamma\in\mathcal G_m} |\Gamma|
\le
K!\sum_{m=0}^{m_\star(s)} |\mathcal G_m|.
\]
Using \eqref{eq:class_count} and a geometric sum,
\[
|S_s(A)|
\le
K!\sum_{m=0}^{m_\star(s)} (enK)^m
\le
K!(enK)^{m_\star(s)+1}
\]
Therefore,
\[
\log |S_s(A)|
\le
\log K! + (m_\star(s)+1)\log(enK).
\]
Using the definition of $m_\star(s)$, we get
\[
\log |S_s(A)|
\le
\log K! +
\left(
\frac{t^\star s+\alpha_0\log(nK)}{\delta}+2
\right)\log(enK).
\]
Since $\delta\asymp \mathrm{Signal}$, $\alpha_0\log(nK)=c_\alpha \mathrm{Signal}$,
and $t^\star=O(1)$ under the standing constant-SNR regime, we obtain
\[
\log |S_s(A)|
\le
C\,\frac{\log(nK)}{\mathrm{Signal}}\,s + C'\log(nK)
\]
for absolute constants $C,C'>0$ ( $\log K!$ is absorbed into $C_1\log(nK)$ under Assumption~\ref{ass:mildK}).
Finally,
\[
\frac{1}{\mathrm{Signal}}
=
\begin{cases}
2/(nI), & K=2,\\[1ex]
\beta K/(nI), & K\ge 3,
\end{cases}
\]
and $\beta$ is bounded by an absolute constant in the $K\ge 3$ regime, so after
adjusting constants,
\[
\log |S_s(A)|
\le
C_0\frac{K\log(nK)}{nI}\,s + C_1\log(nK).
\]
This proves \eqref{eq:lem_near-optimal} on an event of probability at least
$1-e^{-c_0\mathrm{Signal}}$.
\end{proof}


\begin{lemma}[EM utility via peeling]\label{lem:em-utility}\label{thm:em-utility}
	Assume Assumptions~\ref{ass:strong} and~\ref{ass:mildK}. Let $\hat\sigma$ be drawn by the Exponential Mechanism \eqref{eq:em} with privacy budget $\varepsilon_0$, sensitivity $\Delta$, and inverse temperature
	\[
	\eta:=\frac{\varepsilon_0}{2\Delta}.
	\]
	Let $C_0,C_1,c_0>0$ be the constants from Lemma~\ref{lem:nearoptimal}, and define
	\[
	B := C_0\frac{K}{nI}\log(nK),
	\qquad
	\gamma_0 := \eta-B.
	\]
	Suppose $\gamma_0>0$. Then, for any $0<\alpha<\frac12$, 
    with probability at least
\[
1-\alpha-\exp\{-c_0\,\mathrm{Signal}\},
\]
over the joint draw of $A$ and $\hat\sigma$,
\[
T_A(\hat\sigma)\ge \max_{\tau\in\Sigma_\beta}T_A(\tau)-s^\star,\qquad
	s^\star := \frac{C_1\log(nK)+\log(4/\alpha)}{\gamma_0}.
\]
\end{lemma}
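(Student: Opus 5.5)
We combine the peeling inequality (Lemma~\ref{lem:peeling}) with the high-probability bound on near-optimal sets (Lemma~\ref{lem:nearoptimal}). Let $\mathcal E_{\rm no}$ be the event of Lemma~\ref{lem:nearoptimal}, on which
\[
\log|S_s(A)|\le Bs+C_1\log(nK)\quad\text{for all } s\ge0 ,
\]
and $\Pr(\mathcal E_{\rm no}^c)\le e^{-c_0\mathrm{Signal}}$. We argue conditionally on $A$. For $A\in\mathcal E_{\rm no}$ we bound the conditional failure probability of the EM by at most $\alpha$. We then integrate over $A$ and add $\Pr(\mathcal E_{\rm no}^c)$, which gives failure probability at most $\alpha+e^{-c_0\mathrm{Signal}}$ over the joint draw of $(A,\hat\sigma)$.

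Fix $A\in\mathcal E_{\rm no}$ and $s>0$. Lemma~\ref{lem:peeling} gives
\[
\Pr\bigl(T_A(\hat\sigma)\le T^\star-s\mid A\bigr)\le \sum_{\ell\ge1}|S_{\ell s}(A)|e^{-\eta\ell s}
\le (nK)^{C_1}\sum_{\ell\ge1}e^{-(\eta-B)\ell s}
=(nK)^{C_1}\sum_{\ell\ge1}e^{-\gamma_0\ell s},
\]
using $\gamma_0>0$. The geometric series equals $e^{-\gamma_0 s}/(1-e^{-\gamma_0 s})$. Take $s=s^\star$. Then
\[
\gamma_0 s^\star=C_1\log(nK)+\log(4/\alpha),
\]
so $e^{-\gamma_0 s^\star}=(nK)^{-C_1}\alpha/4\le 1/8$ since $\alpha<1/2$. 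Hence the denominator is at least $1/2$, and the bound becomes
\[
(nK)^{C_1}\cdot 2\cdot (nK)^{-C_1}\alpha/4=\alpha/2\le\alpha .
\]
Two small points need care. First, Lemma~\ref{lem:peeling} bounds the event $T_A(\hat\sigma)\le T^\star-s$, whereas we want $T_A(\hat\sigma)\ge T^\star-s^\star$. The only gap is the boundary case of equality, which we absorb by applying the bound at a slightly smaller $s$ or by noting the factor $1/2$ of slack. Second, $\eta$ here is $\varepsilon_0/(2\Delta)$ with $\Delta=\Delta_a$, matching \eqref{eq:em}.

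The main obstacle is not the computation, which is routine. It is the measurability and uniformity issue: the event $\mathcal E_{\rm no}$ must give the bound simultaneously for all $s\ge0$, in particular for all $\ell s^\star$, with a single exceptional set. This is exactly what Lemma~\ref{lem:nearoptimal} provides through its $s$-uniform event $\mathcal E$, so we invoke it as stated. We also check that $\alpha$ enters only through $\log(4/\alpha)$, so the argument holds for every $0<\alpha<1/2$ without further constraint.
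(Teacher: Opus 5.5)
Your argument is the paper's proof essentially line for line: peeling, then the $s$-uniform level-set bound of Lemma~\ref{lem:nearoptimal} to get a geometric series, then $s=s^\star$ to make it at most $\alpha/2$, then averaging over $A$. It therefore inherits a genuine flaw of that proof, located in the cited Lemma~\ref{lem:peeling}. There the layer $\mathcal L_\ell(A;s)=\{\sigma:\ T_A(\sigma)\in(T^\star-(\ell+1)s,\,T^\star-\ell s]\}$ is contained in $S_{(\ell+1)s}(A)$, not in $S_{\ell s}(A)$. So the valid bound is $\sum_{\ell\ge1}|S_{(\ell+1)s}(A)|\,e^{-\eta\ell s}$. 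Combined with the level-set bound, this gives $(nK)^{C_1}e^{Bs}\sum_{\ell\ge1}e^{-\gamma_0\ell s}$. At $s=s^\star$ the extra factor $e^{Bs^\star}=\bigl(4(nK)^{C_1}/\alpha\bigr)^{B/\gamma_0}$ is not small; for $\eta=2B$ the resulting bound is at least $(nK)^{C_1}$, i.e.\ trivial. So the step where you obtain $\alpha/2$ does not go through. You did not introduce this; the paper's proof of the lemma has exactly the same gap.

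The repair is to peel at a width $h$ decoupled from $s$. Bound the number of labelings whose gap $T^\star-T_A(\sigma)$ lies in $[s+jh,\,s+(j+1)h)$ by $|S_{s+(j+1)h}(A)|$ and take $h=1/\eta$. On the event of Lemma~\ref{lem:nearoptimal} this gives
\[
\Pr\bigl(T_A(\hat\sigma)\le T^\star-s\mid A\bigr)\le (nK)^{C_1}e^{-\gamma_0 s}\,\frac{e^{Bh}}{1-e^{-\gamma_0 h}}\le \frac{2e\,\eta}{\gamma_0}\,(nK)^{C_1}e^{-\gamma_0 s},
\]
using $B<\eta$ and $1-e^{-x}\ge x/2$ on $[0,1]$. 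Hence $s^\star$ has to be enlarged to $\bigl(C_1\log(nK)+\log(4e\eta/(\alpha\gamma_0))\bigr)/\gamma_0$. When $\eta\ge 2B$ one has $\eta/\gamma_0\le 2$, and the extra term is absorbed by enlarging the constants; this is all that \eqref{eq:cor_simplified} and the corollaries use. For arbitrary $\gamma_0>0$, however, the additional $\log(\eta/\gamma_0)/\gamma_0$ cannot in general be removed using only the level-set bound. Finally, the boundary issue you raise is not real. The failure event $\{T_A(\hat\sigma)<T^\star-s^\star\}$ is contained in $\{T_A(\hat\sigma)\le T^\star-s^\star\}$, which is exactly the event the peeling bound controls.
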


\begin{proof}
	By Lemma~\ref{lem:peeling}, for any $s>0$,
	\begin{equation}\label{eq:app_peeling}
		\Pr\!\big(T_A(\hat\sigma) \le \max_{\tau \in\Sigma_\beta} T_A(\tau)-s \,\big|\, A\big)
		\;\le\; \sum_{\ell\ge1} |S_{\ell s}(A)|\,e^{-\eta\,\ell s}.
	\end{equation}
	On the high-probability event of Lemma~\ref{lem:nearoptimal}, we have for all $\ell\ge1$,
	\[
	|S_{\ell s}(A)|\;\le\;\exp\!\big\{B\,\ell s + C_1\log(nK)\big\}
	\;=\; (nK)^{C_1}\,e^{B\,\ell s}.
	\]
	Hence the peeling sum in \eqref{eq:app_peeling} is bounded by a geometric series,
	\[
	\sum_{\ell\ge1} |S_{\ell s}(A)|\,e^{-\eta\,\ell s}
	\;\le\; (nK)^{C_1} \sum_{\ell\ge1} e^{-(\eta-B)\,\ell s}
	\;=\; (nK)^{C_1}\,\frac{e^{-\gamma_0 s}}{1-e^{-\gamma_0 s}},
	\qquad \gamma_0:=\eta-B>0.
	\]
	Choose $s=s^\star$ so that the right-hand side is at most $\alpha/2$.
	
	Let
	\[
	\mathcal E := \Big\{\forall s\ge 0:\ \log |S_s(A)| \le B\,s + C_1\log(nK)\Big\}.
	\]
	Then $\mathbb P(\mathcal E)\ge 1-e^{-c_0 \mathrm{Signal}}$. Taking expectations and splitting on $\mathcal E$ gives
	\begin{align*}
		\mathbb P\!\left( T_A(\hat\sigma)\le \max_{\tau\in\Sigma_\beta}T_A(\tau)-s^\star \right)
		&= \mathbb E\!\left[ \mathbb P\!\left( T_A(\hat\sigma)\le \max_{\tau\in\Sigma_\beta}T_A(\tau)-s^\star \,\middle|\, A \right)\mathbf 1_{\mathcal E} \right] \\
		&\quad + \mathbb E\!\left[ \mathbb P\!\left( T_A(\hat\sigma)\le \max_{\tau\in\Sigma_\beta}T_A(\tau)-s^\star \,\middle|\, A \right)\mathbf 1_{\mathcal E^c} \right]\\
		&\le \frac{\alpha}{2}\,\mathbb P(\mathcal E)+\mathbb P(\mathcal E^c)\\
		&\le \frac{\alpha}{2}+e^{-c_0 \mathrm{Signal}}.
	\end{align*}
	After adjusting the constant in $\log(4/\alpha)$, this yields the claimed probability bound.
\end{proof}
\subsection {Proof of Theorem~\ref{thm:private_rate}}\label{sec:proof_main_thm}

\begin{proof}
Set
\[
\varepsilon_0:=\varepsilon/2,
\qquad
\eta:=\frac{\varepsilon_0}{2\Delta_a}=\frac{\varepsilon}{4\Delta_a}.
\]

For the utility analysis, introduce the auxiliary full-domain Exponential-Mechanism sample
\[
\Pr(\bar\sigma=\sigma\mid A)
=
\frac{\exp\{\eta T_A(\sigma)\}\mathbf{1}\{\sigma\in\Sigma_\beta\}}
{\sum_{\tau\in\Sigma_\beta}\exp\{\eta T_A(\tau)\}}.
\]
For every $A\in G_C$, the conditional law of $\bar\sigma(A)$ is exactly the same as that of the
restricted-domain mechanism $\hat\sigma(A)$ in \eqref{eq:em}. Moreover, by Lemma \ref{lem:em-private},
\[
\hat\sigma^f(A)\stackrel{d}{=}\hat\sigma(A), \qquad A\in G_C.
\]
Hence, for every $A\in G_C$,
\[
\hat\sigma^f(A)\stackrel{d}{=}\bar\sigma(A).
\]

Next apply Lemma \ref{lem:em-utility} with privacy budget $\varepsilon_0$ and sensitivity $\Delta=\Delta_a$.
Then
\[
\eta=\frac{\varepsilon_0}{2\Delta_a}=\frac{\varepsilon}{4\Delta_a},
\qquad
\gamma_0=\eta-B,
\]
and, with
\[
s^\star=\frac{C_1\log(nK)+\log(4/\alpha)}{\gamma_0},
\]
we obtain
\[
\Pr\!\left(
T_A(\bar\sigma)<\max_{\tau\in\Sigma_\beta}T_A(\tau)-s^\star
\right)
\le
\alpha+e^{-c_0\mathrm{Signal}}.
\]

Write
\[
T_A^\star:=\max_{\tau\in\Sigma_\beta}T_A(\tau).
\]
Then
\[
\begin{aligned}
&\Pr\!\left(
A\in G_C,\,
T_A(\hat\sigma^f)<T_A^\star-s^\star
\right) \\
&\qquad=
E\!\left[
\mathbf{1}\{A\in G_C\}
\Pr\!\left(
T_A(\hat\sigma^f)<T_A^\star-s^\star \mid A
\right)
\right] \\
&\qquad=
E\!\left[
\mathbf{1}\{A\in G_C\}
\Pr\!\left(
T_A(\bar\sigma)<T_A^\star-s^\star \mid A
\right)
\right] \\
&\qquad\le
\Pr\!\left(
T_A(\bar\sigma)<T_A^\star-s^\star
\right)
\le
\alpha+e^{-c_0\mathrm{Signal}}.
\end{aligned}
\]

Define the event
\[
U:=\left\{
A\in G_C,\,
T_A(\hat\sigma^f)\ge T_A^\star-s^\star
\right\}.
\]
By the previous display and Lemma \ref{lem:degree-envelope},
\begin{equation}\label{eq:probability_complement}
\Pr(U^c)\le \alpha+e^{-c_0\mathrm{Signal}}+e^{-10(a+\log n)}.
\end{equation}

Let $\sigma^\star(A)$ be a measurable maximizer of $T_A$ over $\Sigma_\beta$
(for example, using a fixed tie-breaking rule), and define
\[
\tilde\sigma(A):=
\begin{cases}
\hat\sigma^f(A), & A\in U,\\
\sigma^\star(A), & A\notin U.
\end{cases}
\]
Then $\tilde\sigma\in\Sigma_\beta$ and, by construction,
\[
T_A(\tilde\sigma)\ge T_A^\star-s^\star
\qquad\text{a.s.}
\]
Hence Lemma \ref{lem:risk-with-score-slack} gives
\[
E\,r(\sigma_0,\tilde\sigma)
\le
\exp\{- (1+o(1))\mathrm{Signal}+t^\star s^\star\}.
\]

Since $\tilde\sigma(A)=\hat\sigma^f(A)$ on $U$, we have
\[
r(\sigma_0,\hat\sigma^f)
\le
r(\sigma_0,\tilde\sigma)+\mathbf{1}_{U^c}.
\]
Taking expectations and using \eqref{eq:probability_complement} yields
\[
E\,r(\sigma_0,\hat\sigma^f)
\le
\exp\{- (1+o(1))\mathrm{Signal}+t^\star s^\star\}
+\alpha+e^{-c_0\mathrm{Signal}}+e^{-10(a+\log n)}.
\]
This proves the first risk bound.

Finally, suppose $\eta\ge 2B$ and choose
\[
\alpha=(nK)^{-1}e^{-c_3\varepsilon/2}.
\]
Then
\[
\gamma_0=\eta-B\ge \eta/2=\frac{\varepsilon}{8\Delta_a},
\]
so
\[
s^\star
=
\frac{C_1\log(nK)+\log(4/\alpha)}{\gamma_0}
\lesssim
\frac{\Delta_a\log(nK)}{\varepsilon}+\Delta_a.
\]
Since $t^\star=O(1)$ under the standing constant-SNR assumption, substituting this estimate into
the previous display yields \eqref{eq:cor_simplified}.
\end{proof}


\section{Auxiliary results}\label{app:aux}
\subsection{Chernoff comparison with slack}
Next Lemma  is a direct ``with-slack'' variant of the key Chernoff comparison step used by
\cite{Zhang2016Minimax} to analyze the penalized likelihood (homogeneous-SBM MLE) score.
In their paper, for a fixed alternative labeling $\sigma$, the fundamental task is to control the
probability that $\sigma$ attains a score at least as large as the truth, i.e.
$T(\sigma)\ge T(\sigma_0)$, where $T(\cdot)$ is the penalized within-edge objective
$T(\sigma)=\sum_{i<j}(A_{ij}-\lambda)\1\{\sigma(i)=\sigma(j)\}$.
The proof proceeds by (i) decomposing the score difference into contributions over the
\emph{merge} and \emph{split} pairs (our sets $M(\sigma)$ and $S(\sigma)$, with counts
$\gamma'(\sigma;\sigma_0)$ and $\alpha(\sigma;\sigma_0)$), (ii) applying exponential Markov's
inequality and factorizing the moment generating function using conditional independence of edges,
and (iii) choosing the Chernoff tilt $t^\star$  so that the leading
mgf term is minimized and equals $\Phi(t^\star)=e^{-I}$, where $I$ is the order-$\tfrac12$
R\'enyi divergence between $\mathrm{Ber}(a/n)$ and $\mathrm{Ber}(b/n)$.
The remaining $\lambda$-dependent factors are controlled by selecting $\lambda$ in the admissible
range (equivalently, by the convex-combination parameterization of $\lambda$ in Zhang--Zhou), which
ensures these residual terms are $\le 1$ at $t=t^\star$ and yields the exponent
$\exp\{-I(\alpha\wedge \gamma')\}$ in the ``no-slack'' case.

Lemma~\ref{lem:chernoff_comp} reproduces this argument, but introduces a \emph{slack} parameter
$s\ge 0$ by bounding
\[
\Pr\big(T_A(\sigma)\ge T_A(\sigma_0)-s\big),
\]
rather than $\Pr(T_A(\sigma)\ge T_A(\sigma_0))$.
This modification is exactly what is needed for our privacy analysis: the Exponential Mechanism
typically returns a labeling whose score is near-optimal (within an additive gap) rather than
strictly optimal.

\begin{lemma}[Chernoff comparison at $t^\star$ with slack]\label{lem:chernoff_comp}
	Let $A$ be drawn from the $K$-class SBM, conditional on the ground-truth labeling
	$\sigma_0$, with within-edge probability $p:=a/n$ and across-edge probability
	$q:=b/n$. Assume
	\[
	0<q<p<1.
	\]
	Fix any labeling $\sigma:[n]\to[K]$, and define the split and merge sets
	\[
	S(\sigma):=\{(i,j): i<j,\ \sigma_0(i)=\sigma_0(j),\ \sigma(i)\neq \sigma(j)\},
	\]
	\[
	M(\sigma):=\{(i,j): i<j,\ \sigma_0(i)\neq \sigma_0(j),\ \sigma(i)=\sigma(j)\},
	\]
	with counts
	\[
	\alpha(\sigma;\sigma_0):=|S(\sigma)|,
	\qquad
	\gamma'(\sigma;\sigma_0):=|M(\sigma)|.
	\]
	Let
	\[
	t^\star:=\frac12\log\!\left(\frac{p(1-q)}{q(1-p)}\right)>0.
	\]
	Assume that $\lambda$ is chosen so that
	\begin{equation}
		\frac{1}{t^\star}\log\!\bigl(qe^{t^\star}+1-q\bigr)
		\le \lambda \le
		-\frac{1}{t^\star}\log\!\bigl(pe^{-t^\star}+1-p\bigr).
		\label{eq:A1-lambda-interval}
	\end{equation}
	Equivalently,
	\begin{equation}
		e^{-t^\star\lambda}\bigl(qe^{t^\star}+1-q\bigr)\le 1,
		\qquad
		e^{t^\star\lambda}\bigl(pe^{-t^\star}+1-p\bigr)\le 1.
		\label{eq:A1-lambda-interval-exp}
	\end{equation}
	Then, for every $s\ge 0$,
	\begin{equation}
		\mathbb{P}\!\left(T_A(\sigma)\ge T_A(\sigma_0)-s\right)
		\le
		\exp\!\left\{-I\bigl(\alpha(\sigma;\sigma_0)\wedge \gamma'(\sigma;\sigma_0)\bigr)
		+t^\star s\right\}.
		\label{eq:A1-chernoff-slack}
	\end{equation}
\end{lemma}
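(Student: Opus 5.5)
We would prove this by the standard exponential Markov argument at the fixed tilt $t^\star$. First write the score difference explicitly. Since $T_A(\sigma)=\sum_{i<j}(A_{ij}-\lambda)\1\{\sigma(i)=\sigma(j)\}$, the pairs with $\1\{\sigma(i)=\sigma(j)\}=\1\{\sigma_0(i)=\sigma_0(j)\}$ cancel. Hence
\[
T_A(\sigma)-T_A(\sigma_0)=\sum_{(i,j)\in M(\sigma)}(A_{ij}-\lambda)-\sum_{(i,j)\in S(\sigma)}(A_{ij}-\lambda).
\]
Pairs in $M(\sigma)$ are across-community under $\sigma_0$, so $A_{ij}\sim\mathrm{Ber}(q)$. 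Pairs in $S(\sigma)$ are within-community, so $A_{ij}\sim\mathrm{Ber}(p)$. The sets $M(\sigma)$ and $S(\sigma)$ are disjoint, and all the upper-triangular entries involved are independent given $\sigma_0$.

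Next, apply Markov's inequality to $e^{t^\star(T_A(\sigma)-T_A(\sigma_0))}$ at threshold $-s$, which produces the factor $e^{t^\star s}$. Factorizing the moment generating function gives
\[
\mathbb P\bigl(T_A(\sigma)\ge T_A(\sigma_0)-s\bigr)\le e^{t^\star s}\,x^{\gamma'}\,y^{\alpha},
\qquad x:=e^{-t^\star\lambda}(qe^{t^\star}+1-q),\quad y:=e^{t^\star\lambda}(pe^{-t^\star}+1-p),
\]
where we write $\gamma'=\gamma'(\sigma;\sigma_0)$ and $\alpha=\alpha(\sigma;\sigma_0)$.

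The key identity to verify is $xy=(qe^{t^\star}+1-q)(pe^{-t^\star}+1-p)=e^{-I}$ for the specific choice of $t^\star$. Put $u:=e^{t^\star}=\sqrt{p(1-q)/(q(1-p))}$ and expand the product:
\[
qu+\frac{p}{u}-pq+(1-p)(1-q)-\frac{p}{u}\cdot\text{(cross terms)}\ \ldots
\]
Carried out carefully, the product reduces to $pq+(1-p)(1-q)+qu(1-p)+p(1-q)/u$. Now $qu(1-p)=\sqrt{pq(1-p)(1-q)}$ and $p(1-q)/u=\sqrt{pq(1-p)(1-q)}$. So the product equals $(\sqrt{pq}+\sqrt{(1-p)(1-q)})^2=e^{-I}$ by \eqref{eq:renyi}. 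This short computation is the only real calculation in the proof.

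Finally, assumption \eqref{eq:A1-lambda-interval-exp} gives $x\le 1$ and $y\le 1$. Without loss of generality suppose $\alpha\le\gamma'$. Then $x^{\gamma'}y^{\alpha}=(xy)^{\alpha}x^{\gamma'-\alpha}\le e^{-I\alpha}$, because $x\le 1$ and $\gamma'-\alpha\ge 0$. The case $\gamma'\le\alpha$ is symmetric, using $y\le1$. In both cases $x^{\gamma'}y^{\alpha}\le e^{-I(\alpha\wedge\gamma')}$, which yields \eqref{eq:A1-chernoff-slack}.

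The main obstacle is modest. One must confirm that the interval \eqref{eq:A1-lambda-interval} is nonempty, which is equivalent to $xy\le1$ and follows from the identity $xy=e^{-I}\le1$. One must also confirm that the choices \eqref{eq:lambdaK2} and \eqref{eq:lambdaKge3} fall in this interval, but that check is only needed when the lemma is invoked, not for the lemma itself. The argument does not rely on the slack $s$ interacting with anything else: it enters only through the Markov threshold.
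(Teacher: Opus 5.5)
Your proposal is correct and follows essentially the same route as the paper: exponential Markov at the fixed tilt $t^\star$, factorization of the moment generating function over the disjoint merge and split pairs, the identity $(qe^{t^\star}+1-q)(pe^{-t^\star}+1-p)=e^{-I}$, and absorbing the excess factor using $x\le 1$ and $y\le 1$. The paper writes this last step as $\Phi(t^\star)^{m}x^{\gamma'-m}y^{\alpha-m}$ with $m=\alpha\wedge\gamma'$. Two small points: the garbled first expansion line should be replaced by the four-term expansion you give right after it, and the nonemptiness of the $\lambda$-interval is not needed for the lemma itself, since the lemma assumes such a $\lambda$ is given.
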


\begin{proof}
Note that,
for the choices of $\lambda$ in \eqref{eq:lambdaK2}--\eqref{eq:lambdaKge3}, condition \eqref{eq:A1-lambda-interval}
holds: for $K=2$, \eqref{eq:lambdaK2} is the midpoint of the interval in
\eqref{eq:A1-lambda-interval}, while for $K\ge 3$, \eqref{eq:lambdaKge3} is a convex combination
of its two endpoints.

	Write
	\[
	s_{ij}:=\mathbf{1}\{\sigma(i)=\sigma(j)\},
	\qquad
	s^0_{ij}:=\mathbf{1}\{\sigma_0(i)=\sigma_0(j)\}.
	\]
	Then
	\[
	T_A(\sigma)-T_A(\sigma_0)
	=
	\sum_{i<j}(A_{ij}-\lambda)(s_{ij}-s^0_{ij}).
	\]
	Only pairs with $s_{ij}\neq s^0_{ij}$ contribute. If $(i,j)\in M(\sigma)$,
	then $s^0_{ij}=0$ and $s_{ij}=1$, so the contribution is $A_{ij}-\lambda$.
	If $(i,j)\in S(\sigma)$, then $s^0_{ij}=1$ and $s_{ij}=0$, so the contribution
	is $\lambda-A_{ij}$. Therefore,
	\[
	Z:=T_A(\sigma)-T_A(\sigma_0)
	=
	\sum_{(i,j)\in M(\sigma)}(A_{ij}-\lambda)
	+
	\sum_{(i,j)\in S(\sigma)}(\lambda-A_{ij}).
	\]
	
	Fix $t>0$. By Markov's inequality,
	\[
	\mathbb{P}(Z\ge -s)
	=
	\mathbb{P}(e^{tZ}\ge e^{-ts})
	\le
	e^{ts}\,\mathbb{E}[e^{tZ}].
	\]
	Conditional on $\sigma_0$, the edges $\{A_{ij}:i<j\}$ are independent, and
	\[
	A_{ij}\sim \mathrm{Ber}(q)\quad\text{for }(i,j)\in M(\sigma),
	\qquad
	A_{ij}\sim \mathrm{Ber}(p)\quad\text{for }(i,j)\in S(\sigma).
	\]
	Hence
	\[
	\mathbb{E}[e^{tZ}]
	=
	\prod_{(i,j)\in M(\sigma)}\mathbb{E}[e^{t(A_{ij}-\lambda)}]
	\prod_{(i,j)\in S(\sigma)}\mathbb{E}[e^{t(\lambda-A_{ij})}]
	=
	\Bigl(e^{-t\lambda}(qe^t+1-q)\Bigr)^{\gamma'}
	\Bigl(e^{t\lambda}(pe^{-t}+1-p)\Bigr)^{\alpha},
	\]
	where, for brevity, $\alpha=\alpha(\sigma;\sigma_0)$ and
	$\gamma'=\gamma'(\sigma;\sigma_0)$.
	
	Set $m:=\alpha\wedge\gamma'$. Then
	\[
	\mathbb{E}[e^{tZ}]
	=
	\Bigl((qe^t+1-q)(pe^{-t}+1-p)\Bigr)^m
	\Bigl(e^{-t\lambda}(qe^t+1-q)\Bigr)^{\gamma'-m}
	\Bigl(e^{t\lambda}(pe^{-t}+1-p)\Bigr)^{\alpha-m}.
	\]
	Now choose $t=t^\star$. By \eqref{eq:A1-lambda-interval-exp}, the last two
	factors are at most $1$, so
	\[
	\mathbb{E}[e^{t^\star Z}]
	\le
	\Phi(t^\star)^m,
	\qquad
	\Phi(t):=(qe^t+1-q)(pe^{-t}+1-p).
	\]
	
	It remains to evaluate $\Phi(t^\star)$. Since
	\[
	e^{2t^\star}=\frac{p(1-q)}{q(1-p)},
	\]
	we have
	\[
	p(1-q)e^{-t^\star}=q(1-p)e^{t^\star}
	=\sqrt{pq(1-p)(1-q)}.
	\]
	Therefore,
	\[
	\Phi(t^\star)
	=
	pq+(1-p)(1-q)+p(1-q)e^{-t^\star}+q(1-p)e^{t^\star}
	=
	\bigl(\sqrt{pq}+\sqrt{(1-p)(1-q)}\bigr)^2.
	\]
	By the definition of the order-$\tfrac12$ R\'enyi divergence $I$,
	\[
	\Phi(t^\star)=e^{-I}.
	\]
	Combining the previous displays gives
	\[
	\mathbb{E}[e^{t^\star Z}]
	\le e^{-Im}.
	\]
	Finally,
	\[
	\mathbb{P}\!\left(T_A(\sigma)\ge T_A(\sigma_0)-s\right)
	=
	\mathbb{P}(Z\ge -s)
	\le
	e^{t^\star s}\,\mathbb{E}[e^{t^\star Z}]
	\le
	\exp\!\left\{-Im+t^\star s\right\},
	\]
	which is exactly \eqref{eq:A1-chernoff-slack}.
\end{proof}


\subsection{Proof Corollary \ref{cor:logn-necessary}}
\begin{proof}

	If $\delta(M)\le n^{-c}$, then \eqref{Eq:mismatch_pr_lb} implies
	\[
	\frac{1}{1+e^{2\varepsilon}}\ \le\ n^{-c}
	\quad\Longrightarrow\quad
	1+e^{2\varepsilon}\ \ge\ n^c
	\quad\Longrightarrow\quad
	e^{2\varepsilon}\ \ge\ n^c-1,
	\]
	hence $\varepsilon\ge \frac12\log(n^c-1)$. For $n^c\ge 2$ one has $n^c-1\ge \frac12 n^c$, giving
	$\varepsilon\ge \frac{c}{2}\log n-\frac12\log 2$.
	
	Similarly, if $R(M)\le n^{-(1+c)}$, then \eqref{eq:Er-lb} implies
	\[
	\frac{1}{n(1+e^{2\varepsilon})}\ \le\ n^{-(1+c)}
	\quad\Longrightarrow\quad
	\frac{1}{1+e^{2\varepsilon}}\ \le\ n^{-c},
	\]
	and the same algebra yields $\varepsilon\ge \frac12\log(n^c-1)$.
\end{proof}
\subsection{Reduction to a difference of Bernoulli sums}
Next lemma isolates the Bernoulli-sum reduction underlying \cite[Proposition~5.1]{Zhang2016Minimax} while Lemma \ref{lem:chernoff_comp} provides the corresponding Chernoff comparison with additive score slack.
\begin{lemma}
\label{lem:bernoulli-reduction}
Fix a ground-truth labeling $\sigma_0 \in \Sigma_\beta$ and a candidate labeling
$\sigma \in \Sigma_\beta$. Recall that
\[
T_A(\sigma)=\sum_{i<j}(A_{ij}-\lambda)\mathbf{1}\{\sigma(i)=\sigma(j)\}.
\]
Define the split and merge sets
\[
S(\sigma;\sigma_0)
:=\{(i,j): i<j,\ \sigma_0(i)=\sigma_0(j),\ \sigma(i)\neq \sigma(j)\},
\]
\[
M(\sigma;\sigma_0)
:=\{(i,j): i<j,\ \sigma_0(i)\neq \sigma_0(j),\ \sigma(i)=\sigma(j)\},
\]
and let
\[
\alpha(\sigma;\sigma_0):=|S(\sigma;\sigma_0)|,\qquad
\gamma(\sigma;\sigma_0):=|M(\sigma;\sigma_0)|.
\]
Then, under the homogeneous SBM with truth $\sigma_0$,
\[
T_A(\sigma)-T_A(\sigma_0)
\overset{d}{=}
\sum_{u=1}^{\gamma(\sigma;\sigma_0)} X_u
-
\sum_{v=1}^{\alpha(\sigma;\sigma_0)} Y_v
-\lambda\bigl(\gamma(\sigma;\sigma_0)-\alpha(\sigma;\sigma_0)\bigr),
\]
where
\[
X_1,\dots,X_{\gamma(\sigma;\sigma_0)} \stackrel{\text{iid}}{\sim} \mathrm{Ber}(b/n),
\qquad
Y_1,\dots,Y_{\alpha(\sigma;\sigma_0)} \stackrel{\text{iid}}{\sim} \mathrm{Ber}(a/n),
\]
and the two families are independent. Consequently, for every $s\ge 0$,
\[
\mathbb{P}\!\left(T_A(\sigma)\ge T_A(\sigma_0)-s\right)
=
\mathbb{P}\!\left(
\sum_{u=1}^{\gamma(\sigma;\sigma_0)} X_u
-
\sum_{v=1}^{\alpha(\sigma;\sigma_0)} Y_v
\ge
\lambda\bigl(\gamma(\sigma;\sigma_0)-\alpha(\sigma;\sigma_0)\bigr)-s
\right).
\]
\end{lemma}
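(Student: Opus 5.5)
The plan is to write the score difference $T_A(\sigma)-T_A(\sigma_0)$ as a sum over the pairs whose co-membership indicator differs between $\sigma$ and $\sigma_0$, and then read off the distribution of the surviving edge variables. First, set $s_{ij}:=\mathbf 1\{\sigma(i)=\sigma(j)\}$ and $s^0_{ij}:=\mathbf 1\{\sigma_0(i)=\sigma_0(j)\}$. Then
\[
T_A(\sigma)-T_A(\sigma_0)=\sum_{i<j}(A_{ij}-\lambda)(s_{ij}-s^0_{ij}),
\]
which is the same identity used at the start of the proof of Lemma~\ref{lem:chernoff_comp}. Only pairs with $s_{ij}\neq s^0_{ij}$ survive, and these are exactly the merge set $M(\sigma;\sigma_0)$ and the split set $S(\sigma;\sigma_0)$. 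Pairs in $M$ contribute $A_{ij}-\lambda$ and pairs in $S$ contribute $\lambda-A_{ij}$. Since $M$ and $S$ are disjoint, this gives the deterministic identity
\[
T_A(\sigma)-T_A(\sigma_0)=\sum_{(i,j)\in M}A_{ij}-\sum_{(i,j)\in S}A_{ij}-\lambda\bigl(|M|-|S|\bigr).
\]

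Second, I identify the joint law of the variables that appear. Conditional on $\sigma_0$, the upper-triangular entries $\{A_{ij}\}_{i<j}$ are independent. Every $(i,j)\in M$ has $\sigma_0(i)\neq\sigma_0(j)$, so $A_{ij}\sim\mathrm{Ber}(b/n)$. Every $(i,j)\in S$ has $\sigma_0(i)=\sigma_0(j)$, so $A_{ij}\sim\mathrm{Ber}(a/n)$. Because $M$ and $S$ are disjoint sets of distinct unordered pairs, the collection $\{A_{ij}:(i,j)\in M\cup S\}$ consists of mutually independent variables. Enumerate $M$ and $S$ in a fixed order and set $X_u$ equal to the $u$-th edge variable in $M$ and $Y_v$ equal to the $v$-th edge variable in $S$. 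Then the $X_u$ are i.i.d.\ $\mathrm{Ber}(b/n)$, the $Y_v$ are i.i.d.\ $\mathrm{Ber}(a/n)$, and the two families are independent. Substituting into the identity gives the distributional equality, and in fact an almost-sure one on this probability space.

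Third, the probability statement follows directly. The event $\{T_A(\sigma)\ge T_A(\sigma_0)-s\}$ is a measurable function of the difference, so equality in distribution transfers its probability. Rearranging the inequality moves $\lambda(\gamma-\alpha)$ to the right-hand side, giving the stated equality for every $s\ge0$.

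There is no real obstacle. The only point to check is the independence claim: $M$ and $S$ must be disjoint, which holds since they are distinguished by whether $s^0_{ij}$ equals $1$ or $0$. With that, edge independence under the SBM carries over to the subfamily. Membership $\sigma,\sigma_0\in\Sigma_\beta$ is not used beyond the definitions.
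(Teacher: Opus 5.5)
Your proposal is correct and follows the paper's proof essentially step for step. You write the difference as $\sum_{i<j}(A_{ij}-\lambda)(s_{ij}-s^0_{ij})$, note that only the merge and split pairs survive (with signs $+1$ and $-1$), enumerate $M$ and $S$ to obtain independent $\mathrm{Ber}(b/n)$ and $\mathrm{Ber}(a/n)$ families, and derive the probability identity by the same rearrangement.
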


\begin{proof}
Write, for brevity,
\[
\alpha=\alpha(\sigma;\sigma_0),\qquad \gamma=\gamma(\sigma;\sigma_0),\qquad
S=S(\sigma;\sigma_0),\qquad M=M(\sigma;\sigma_0).
\]
Since
\[
T_A(\sigma)-T_A(\sigma_0)
=
\sum_{i<j}(A_{ij}-\lambda)
\bigl(\mathbf{1}\{\sigma(i)=\sigma(j)\}-\mathbf{1}\{\sigma_0(i)=\sigma_0(j)\}\bigr),
\]
only pairs $(i,j)$ for which the indicators differ contribute to the difference.
If $(i,j)\in M$, then
\[
\mathbf{1}\{\sigma(i)=\sigma(j)\}-\mathbf{1}\{\sigma_0(i)=\sigma_0(j)\}=1,
\]
while if $(i,j)\in S$, then
\[
\mathbf{1}\{\sigma(i)=\sigma(j)\}-\mathbf{1}\{\sigma_0(i)=\sigma_0(j)\}=-1.
\]
Therefore
\[
T_A(\sigma)-T_A(\sigma_0)
=
\sum_{(i,j)\in M}(A_{ij}-\lambda)-\sum_{(i,j)\in S}(A_{ij}-\lambda)
=
\sum_{(i,j)\in M}A_{ij}-\sum_{(i,j)\in S}A_{ij}-\lambda(\gamma-\alpha).
\]

Now enumerate the pairs in $M$ and $S$ as
\[
M=\{(i_u,j_u):1\le u\le \gamma\},\qquad
S=\{(k_v,\ell_v):1\le v\le \alpha\},
\]
and define
\[
X_u:=A_{i_u j_u},\qquad 1\le u\le \gamma,
\qquad
Y_v:=A_{k_v \ell_v},\qquad 1\le v\le \alpha.
\]
Under the homogeneous SBM with truth $\sigma_0$, every pair in $M$ is an across-community
pair for $\sigma_0$, so
\[
X_u \sim \mathrm{Ber}(b/n),\qquad 1\le u\le \gamma.
\]
Likewise, every pair in $S$ is a within-community pair for $\sigma_0$, so
\[
Y_v \sim \mathrm{Ber}(a/n),\qquad 1\le v\le \alpha.
\]
Since the upper-triangular entries of $A$ are independent under the SBM, the collection
$\{X_u\}_{u=1}^\gamma$ is independent of the collection $\{Y_v\}_{v=1}^\alpha$, and all
variables inside each collection are independent as well. Hence
\[
T_A(\sigma)-T_A(\sigma_0)
\overset{d}{=}
\sum_{u=1}^{\gamma} X_u-\sum_{v=1}^{\alpha} Y_v-\lambda(\gamma-\alpha).
\]
The probability identity follows immediately by rearranging the event
\[
T_A(\sigma)\ge T_A(\sigma_0)-s.
\]
\end{proof}
\subsection{Two-community identity for split and merge counts}
Next we provide the proof of two-community identity for split and merge counts from Appendix A.2 in \cite{Zhang2016Minimax}.
\begin{lemma}
\label{lem:alpha-gamma-k2}
Assume $K=2$. Let $\sigma_0,\sigma:[n]\to\{1,2\}$ and suppose
\[
d(\sigma,\sigma_0)=m,
\]
where
\[
d(\sigma,\sigma_0):=\min_{\pi\in S_2} d_H(\sigma,\pi\circ\sigma_0).
\]
Define
\[
\alpha(\sigma;\sigma_0)
:=\bigl|\{(i,j): i<j,\ \sigma_0(i)=\sigma_0(j),\ \sigma(i)\neq \sigma(j)\}\bigr|,
\]
\[
\gamma(\sigma;\sigma_0)
:=\bigl|\{(i,j): i<j,\ \sigma_0(i)\neq \sigma_0(j),\ \sigma(i)=\sigma(j)\}\bigr|.
\]
Then
\[
\alpha(\sigma;\sigma_0)+\gamma(\sigma;\sigma_0)=m(n-m).
\]
\end{lemma}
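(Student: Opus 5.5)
The plan is to reduce everything to a single counting identity for the pairs on which the two co-membership relations disagree. The first step is to observe that $\alpha(\sigma;\sigma_0)+\gamma(\sigma;\sigma_0)$ counts exactly the unordered pairs $\{i,j\}$ with
\[
\mathbf 1\{\sigma(i)=\sigma(j)\}\neq \mathbf 1\{\sigma_0(i)=\sigma_0(j)\}.
\]
Indeed, the split pairs are those where the $\sigma_0$-indicator is $1$ and the $\sigma$-indicator is $0$. The merge pairs are those where the reverse holds. The two sets are disjoint and together exhaust all pairs with disagreeing indicators.

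Next, I would remove the permutation. Let $\pi\in S_2$ attain the minimum in $d(\sigma,\sigma_0)$ and set $\sigma_1:=\pi\circ\sigma_0$. Then $d_H(\sigma,\sigma_1)=m$. Moreover, $\mathbf 1\{\sigma_1(i)=\sigma_1(j)\}=\mathbf 1\{\sigma_0(i)=\sigma_0(j)\}$ for all $i,j$, since $\pi$ is a bijection. Hence both $\alpha$ and $\gamma$ are unchanged if $\sigma_0$ is replaced by $\sigma_1$. Let
\[
D:=\{i:\sigma(i)\neq\sigma_1(i)\},\qquad |D|=m.
\]

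The key step uses $K=2$. Each vertex $i\in D$ has $\sigma(i)$ equal to the unique other label, namely $3-\sigma_1(i)$. Therefore, for any pair $\{i,j\}$, we have $\sigma(i)=\sigma(j)$ iff $\sigma_1(i)=\sigma_1(j)$ when $i,j$ are both in $D$ or both outside $D$. Conversely, $\sigma(i)=\sigma(j)$ iff $\sigma_1(i)\neq\sigma_1(j)$ when exactly one of $i,j$ lies in $D$. The case analysis is immediate: flipping both labels, or neither, preserves equality, while flipping exactly one toggles it.

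It follows that the indicators disagree precisely on pairs with exactly one endpoint in $D$. There are $|D|\,(n-|D|)=m(n-m)$ such pairs, which gives $\alpha+\gamma=m(n-m)$.

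I do not expect a real obstacle. The only point needing care is the reduction to the optimal permutation, which is the step that justifies $|D|=m$ rather than just $|D|\ge m$. The identity itself holds for $D$ defined via any $\pi$, but equating $|D|$ with $m$ uses the minimizer.
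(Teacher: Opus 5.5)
Your proposal is correct and follows essentially the same route as the paper: pass to the optimal relabeling, let $D$ be the disagreement set of size $m$, observe that the co-membership indicator flips exactly on pairs with one endpoint in $D$, and count $m(n-m)$ such pairs. The only cosmetic difference is that you relabel $\sigma_0$ while the paper relabels $\sigma$; these are equivalent, since $\alpha$ and $\gamma$ depend only on the two induced partitions.
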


\begin{proof}
Since $\alpha(\sigma;\sigma_0)$ and $\gamma(\sigma;\sigma_0)$ depend only on the partition
induced by $\sigma$, they are unchanged if we globally relabel the two classes of $\sigma$.
Hence, after composing $\sigma$ with the nontrivial permutation in $S_2$ if necessary, we may
assume
\[
d_H(\sigma,\sigma_0)=d(\sigma,\sigma_0)=m.
\]

Let
\[
D:=\{i\in[n]: \sigma(i)\neq \sigma_0(i)\},
\]
so that $|D|=m$. Since there are only two labels, for every $i\in D$ we necessarily have
\[
\sigma(i)=3-\sigma_0(i),
\]
while for every $i\notin D$ we have $\sigma(i)=\sigma_0(i)$.

Now fix any pair $i<j$. There are three cases.

\medskip
\noindent
(i) If either both $i,j\in D$ or both $i,j\notin D$, then the relation
“same label / different label” is unchanged from $\sigma_0$ to $\sigma$.
Indeed, either both labels are unchanged, or both are flipped simultaneously, and in either case
\[
\mathbf 1\{\sigma(i)=\sigma(j)\}
=
\mathbf 1\{\sigma_0(i)=\sigma_0(j)\}.
\]

\medskip
\noindent
(ii) If exactly one of $i,j$ belongs to $D$, then that relation is reversed:
\[
\mathbf 1\{\sigma(i)=\sigma(j)\}
=
1-\mathbf 1\{\sigma_0(i)=\sigma_0(j)\}.
\]

Therefore,
\[
\mathbf 1\{\sigma(i)=\sigma(j)\}\neq \mathbf 1\{\sigma_0(i)=\sigma_0(j)\}
\quad\Longleftrightarrow\quad
|\{i,j\}\cap D|=1.
\]
But the pairs for which the equality relation changes are precisely the pairs counted by
$\alpha(\sigma;\sigma_0)$ or by $\gamma(\sigma;\sigma_0)$. Hence
\[
\alpha(\sigma;\sigma_0)+\gamma(\sigma;\sigma_0)
=
\bigl|\{(i,j): i<j,\ |\{i,j\}\cap D|=1\}\bigr|.
\]
The right-hand side is just the number of unordered pairs with one endpoint in $D$ and the
other in $D^c$, namely
\[
|D|\,|D^c|=m(n-m).
\]
This proves the claim.
\end{proof}
\subsection{Two-class layer counting and summation}
The following lemma is a 
$K=2$ specialization of the layer-counting argument in Zhang and Zhou \cite[Theorem~3.1 and Appendix~A.2]{Zhang2016Minimax}, rewritten here for convenience and with the straightforward modification to accommodate an additive score slack $s_n$. 
\begin{lemma}
\label{lem:k2-layer-summation}
Assume $K=2$ and $nI\to\infty$. Let $\hat\sigma=\hat\sigma(A)\in\Sigma_\beta$
be any estimator such that
\[
T_A(\hat\sigma)\ge \max_{\sigma\in\Sigma_\beta} T_A(\sigma)-s_n
\]
for some deterministic sequence $s_n\ge 0$. Suppose moreover that for every
$\sigma\in\Sigma_\beta$ with $d(\sigma,\sigma_0)=m$,
\begin{equation}\label{eq:A1}
\mathbb P\!\left(T_A(\sigma)\ge T_A(\sigma_0)-s_n\right)
\le
\exp\!\left\{-\frac{m(n-m)}{2}I+t^\star s_n\right\}.
\end{equation}
Then
\[
\mathbb E\, r(\sigma_0,\hat\sigma)
\le
\exp\!\left\{-\Bigl(1+o(1)\Bigr)\frac{nI}{2}+t^\star s_n\right\}.
\]
\end{lemma}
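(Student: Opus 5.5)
We will bound the expected mis-match layer by layer, using \eqref{eq:risk-layer-decomp}. The inclusion $\{d(\hat\sigma,\sigma_0)=m\}\subseteq E_m(s_n)$ gives
\[
\mathbb E\,r(\sigma_0,\hat\sigma)\le \frac1n\sum_{m=1}^{n-1} m\,\mathbb P(E_m(s_n)).
\]
For $K=2$ we may restrict to $1\le m\le n/2$, because $d(\sigma,\sigma_0)\le n/2$ always holds once we minimize over the two label permutations. We then bound $\mathbb P(E_m(s_n))$ by a union bound over orbits $\Gamma\in\mathcal G_m$, choosing one representative $\sigma_\Gamma$ per orbit. This is legitimate because $T_A$ is constant on orbits. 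For $K=2$ each orbit has two elements, and the number of orbits at distance $m$ is at most $\binom nm$. Combining this with \eqref{eq:A1},
\[
\mathbb P(E_m(s_n))\le \binom nm \exp\Bigl\{-\frac{m(n-m)I}{2}+t^\star s_n\Bigr\}.
\]
The factor $e^{t^\star s_n}$ does not depend on $m$, so we pull it out of the sum. What remains is exactly the non-slack sum of Zhang--Zhou:
\[
\frac1n\sum_{m=1}^{\lfloor n/2\rfloor} m\binom nm e^{-m(n-m)I/2}.
\]

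We split this sum at $m_1:=\lfloor \epsilon_n n\rfloor$, where $\epsilon_n\to 0$ slowly, chosen so that $\epsilon_n nI\to\infty$ and $\log(1/\epsilon_n)=o(nI)$. For instance, take $\epsilon_n=(nI)^{-1/2}\vee n^{-1/2}$, adjusted as needed.

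\textbf{Small layers ($m\le m_1$).} Here $n-m\ge (1-\epsilon_n)n$, and we use $m\binom nm\le n (en/m)^m$. The summand is then at most
\[
n\exp\{m[\log(en/m)-(1-\epsilon_n)nI/2]\}.
\]
The case $m=1$ gives $\exp\{-(1-\epsilon_n)nI/2+\log(en)\}$ after dividing by $n$. This is where the key difficulty lies: $\log n$ must be $o(nI)$ for the exponent to be $(1+o(1))nI/2$. If it is not, we instead keep the precise $m=1$ term $\frac1n\cdot n\cdot e^{-(n-1)I/2}$, which is $e^{-(1+o(1))nI/2}$ with no extra $\log n$, since $m\binom nm=n$ exactly at $m=1$. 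For $m\ge2$, we compare ratios of consecutive terms. The ratio is at most
\[
\frac{en}{m}\,e^{-(n-2m-1)I/2},
\]
and this is small provided $nI\gtrsim\log n$. The sum is therefore dominated geometrically by the $m=1$ term. In the regime where the hypothesis only gives $nI\to\infty$, I would follow Zhang--Zhou and note that the rate statement is trivial unless $e^{-nI/2}$ is below the trivial bound; when $nI\lesssim \log n$, the $(en/m)^m$ entropy is handled by the same ratio argument with $m\ge m_0$ large. I expect this regime bookkeeping to be the main obstacle, and I would import \cite[Appendix~A.2]{Zhang2016Minimax} verbatim there.

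\textbf{Large layers ($m_1<m\le n/2$).} Here $m(n-m)\ge m n/2$, so each summand is at most
\[
\exp\{m[\log(e/\epsilon_n)+1-nI/4]\}.
\]
Since $\log(1/\epsilon_n)=o(nI)$, this is at most $e^{-m nI/5}$. Summing over $m>m_1$ gives at most $e^{-\epsilon_n n\cdot nI/5}$, which is far smaller than $e^{-nI/2}$. Adding both parts and restoring the factor $e^{t^\star s_n}$ yields the claimed bound $\exp\{-(1+o(1))nI/2+t^\star s_n\}$.
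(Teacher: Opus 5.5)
\textbf{Verdict: genuine gap.} Your proposal proves the lemma only when $nI/2\ge(1+\delta)\log n$, whereas the hypothesis $nI\to\infty$ also allows $nI/2\le(1+o(1))\log n$.

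In that first regime your argument is correct and is essentially the paper's Case~1. The orbit union bound holds, and your count $\binom nm$ is sharper than the paper's $\binom nm2^m$. The large-layer estimate and the consecutive-ratio argument also hold. Note that you genuinely need $nI/2\ge(1+\delta)\log n$, not merely $nI\gtrsim\log n$: at $nI=\log n$ the ratio at $m=1$ is of order $\sqrt n$.

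In the remaining regimes, the reduction you commit to is
\[
\bbE\, r(\sigma_0,\hat\sigma)\le e^{t^\star s_n}\,\frac1n\sum_{m=1}^{\lfloor n/2\rfloor} m\binom nm e^{-m(n-m)I/2},
\]
and this is not hard bookkeeping; the bound is useless there. Suppose $nI/2\le(1-\varepsilon)\log n$. Put $\lambda_n:=ne^{-nI/2}\ge n^{\varepsilon}$ and $m:=\lfloor\lambda_n/2\rfloor$. Since $\binom nm\ge(n/m)^m$ and $e^{-m(n-m)I/2}\ge e^{-mnI/2}$, this single summand is at least $\frac1n(\lambda_n/m)^m\ge\frac1n2^{\lfloor\lambda_n/2\rfloor}\to\infty$. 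The same blow-up occurs inside the regime $nI/(2\log n)\to1$ whenever $\lambda_n\gg\log n$.

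Consequently, no ratio or case analysis applied to this sum can give $e^{-(1+o(1))nI/2}$. Neither the paper nor Zhang--Zhou bound this sum in those regimes, since it diverges there. Their Appendix~A.2 therefore cannot be imported ``verbatim'' into your framework. Your remark that the rate is trivial unless $e^{-nI/2}$ is below the trivial bound disposes of no regime, because $nI\to\infty$ makes the target tend to $0$ everywhere.

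The missing idea is to truncate the loss \emph{before} any union bound: $r(\sigma_0,\hat\sigma)\le m_0/n+\mathbf 1\{d(\hat\sigma,\sigma_0)>m_0\}$. Hence $\bbE\,r\le m_0/n+e^{t^\star s_n}\sum_{m>m_0}\binom nm e^{-m(n-m)I/2}$. Here is how the pieces fit:
\begin{itemize}
\item Take $m_0:=\lceil n\exp(-(1-w_n)nI/2)\rceil$ with $w_n\to0$, chosen so that $w_nnI\to\infty$ and $w_nm_0\to\infty$ (as in the paper's Case~3). The head then costs $m_0/n=e^{-(1+o(1))nI/2}$ deterministically.
\item For $m_0<m\le\epsilon_nn$, with your split point chosen so that $\epsilon_n\le w_n/2$, the terms are at most $(e\lambda_ne^{mI/2}/m_0)^m\le e^{-m(w_nnI/4-1)}$. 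They decay geometrically because $m_0\ge\lambda_ne^{w_nnI/2}\gg\lambda_n$.
\item Your large-layer bound handles $m>\epsilon_nn$, so the whole tail is $o(m_0/n)$.
\item Finally, $m_0/n$ is absorbed using $e^{t^\star s_n}\ge1$.
\end{itemize}

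Your phrase ``the same ratio argument with $m\ge m_0$ large'' covers only the tail. The deterministic treatment of the head $m\le m_0$ is the step your proposal lacks, and it is how the paper handles its Cases~2 and~3.

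One caution if you follow the paper there. Its Case~2 choice $w_n=e^{-\varepsilon nI/2}$ gives $w_nnI\to0$, so $m_0=(1+o(1))\lambda_n$, and the claimed geometric decay of the tail fails. Taking, e.g., $w_n=(nI)^{-1/2}$ in that case repairs it.
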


\begin{proof}
For each integer $m$ with $1\leq m\leq n/2$, write
\(
P_m:=\mathbb P(E_m(s_n))
\) where \(E_m(s_n)\) is defined in \eqref{def:E_m}. We have for $K=2$ that 
\begin{equation}\label{eq:A2}
		\mathbb E\,\left(r(\sigma_0,\hat\sigma)\right )
		=
		\frac1n\sum_{m=1}^{n-1} m\,\mathbb P\bigl(d(\hat\sigma,\sigma_0)=m\bigr)
		\le
		\frac1n\sum_{m=1}^{n/2} m\,\mathbb P\bigl(E_m(s_n)\bigr).
	\end{equation}
We first bound $P_m$. Any equivalence class $\Gamma$ defined in \eqref{def:Gamma} with
$d(\Gamma,\sigma_0)=m$ admits a representative $\sigma$ with
$d_H(\sigma,\sigma_0)=m$. Such a representative is determined by choosing the
$m$ coordinates on which it differs from $\sigma_0$, and then assigning to
each chosen coordinate one of at most two labels. Therefore the number of such
equivalence classes is at most
\[
\binom{n}{m}2^m\le \left(\frac{2en}{m}\right)^m.
\]
Combining this counting bound with \eqref{eq:A1}, we obtain
\begin{equation}\label{eq:A3}
P_m\le
\left(\frac{2en}{m}\right)^m
\exp\!\left\{-\frac{m(n-m)}{2}I+t^\star s_n\right\},
\qquad 1\le m\le n/2.    
\end{equation}

Now set
\[
\widetilde P_m:=e^{-t^\star s_n}P_m.
\]
Then \eqref{eq:A3} becomes
\begin{equation}\label{eq:A4}
\widetilde P_m\le
\left(\frac{2en}{m}\right)^m
\exp\!\left\{-\frac{m(n-m)}{2}I\right\}.
\end{equation}
We treat three regimes.

\medskip
\noindent
{\bf Case 1:} There exists $0<\varepsilon<1/8$ such that
\[
\frac{nI}{2}>(1+\varepsilon)\log n
\]
for all sufficiently large $n$.
Let
\[
m_0:=1,\qquad m_1:=\frac{\varepsilon n}{2},\qquad
R_n:=n\exp\!\left\{-\frac{(n-1)I}{2}\right\}.
\]
From \eqref{eq:A4},
\[
\widetilde P_1\le R_n.
\]
For $2\le m\le m_1$, since $m\ge2$ and $n-m\ge (1-\varepsilon/2)n$,
\[
\widetilde P_m
\le
\left(\frac{2en}{2}\right)^m
\exp\!\left\{-\frac{m(n-m)}{2}I\right\}
\le
R_n\,n^{-\varepsilon m/4}
\]
for all large $n$. For $m_1<m\le n/2$, we have
\[
\widetilde P_m
\le
\left(\frac{2en}{\varepsilon n}\right)^m
\exp\!\left\{-\frac{nm}{4}I\right\}
\le
R_n\exp\!\left\{-\frac{n(m-4)}{8}I\right\}
\]
for all large $n$. Therefore
\[
\sum_{m=2}^{n/2} m\widetilde P_m=o(R_n).
\]
Using \eqref{eq:A2},
\[
\mathbb E\,r(\sigma_0,\hat\sigma)
\le
\frac{e^{t^\star s_n}}{n}\sum_{m=1}^{n/2}m\widetilde P_m
=
(1+o(1))e^{t^\star s_n}\exp\!\left\{-\frac{(n-1)I}{2}\right\}.
\]
This is
\[
\exp\!\left\{-\Bigl(1+o(1)\Bigr)\frac{nI}{2}+t^\star s_n\right\}.
\]

\medskip
\noindent
{\bf Case 2:} There exists $0<\varepsilon<1/8$ such that
\[
\frac{nI}{2}<(1-\varepsilon)\log n
\]
for all sufficiently large $n$.
Define
\[
m_0:=\Bigl\lceil n\exp\!\Bigl(-\bigl(1-e^{-\varepsilon nI/2}\bigr)\frac{nI}{2}\Bigr)\Bigr\rceil,
\qquad
m_1:=\Bigl\lceil n e^{-nI/8}\Bigr\rceil.
\]
Then $m_0\ge n^{\varepsilon/2}$ and $m_0=o(m_1)$.
Also,
\[
r(\sigma_0,\hat\sigma)
\le
\frac{m_0}{n}
+\mathbf 1\{d(\hat\sigma,\sigma_0)>m_0\},
\]
so by \eqref{eq:A2},
\begin{equation}\label{eq:A5}
\mathbb E\,r(\sigma_0,\hat\sigma)
\le
\frac{m_0}{n}
+\sum_{m>m_0} P_m
=
\frac{m_0}{n}
+e^{t^\star s_n}\sum_{m>m_0}\widetilde P_m.
\end{equation}

For $m_0<m\le m_1$, using \eqref{eq:A4}, $m\ge m_0$, and $n-m\ge n-m_1$,
\[
\widetilde P_m
\le
\left(\frac{2en}{m_0}\right)^m
\exp\!\left\{-\frac{m(n-m_1)}{2}I\right\}
\le
\exp\!\left\{-\frac14 e^{-\varepsilon nI/2}nmI\right\}
=:Q_m
\]
for all large $n$. Since $\{Q_m\}_{m_0<m\le m_1}$ is geometrically decaying and
$e^{-\varepsilon nI/2}m_0\to\infty$, we have
\[
\sum_{m_0<m\le m_1}Q_m=o(m_0/n).
\]
For $m_1<m\le n/2$, again by \eqref{eq:A4},
\[
\widetilde P_m
\le
\left(\frac{2en}{m_1}\right)^m
\exp\!\left\{-\frac{nm}{4}I\right\}
\le
\exp\!\left\{-\frac{nmI}{16}\right\}
=:Q'_m
\]
for all large $n$, and $\sum_{m>m_1}Q'_m=o(m_0/n)$.
Substituting into \eqref{eq:A5} gives
\[
\mathbb E\,r(\sigma_0,\hat\sigma)
\le
(1+o(1))\frac{m_0}{n}\,e^{t^\star s_n}.
\]
Since
\[
\frac{m_0}{n}
=
\exp\!\left\{-\Bigl(1-e^{-\varepsilon nI/2}\Bigr)\frac{nI}{2}+o(1)\right\}
=
\exp\!\left\{-\Bigl(1+o(1)\Bigr)\frac{nI}{2}\right\},
\]
the desired bound follows.

\medskip
\noindent
{\bf Case 3:} 
\[
\frac{nI}{2\log n}\to 1.
\]
Choose a positive sequence $w=w_n\to0$ such that
\[
\left|\frac{nI}{2\log n}-1\right|\ll w,
\qquad
\frac1{\sqrt{\log n}}\le w.
\]
Define
\[
m_0:=\Bigl\lceil n\exp\!\left(-(1-w)\frac{nI}{2}\right)\Bigr\rceil,
\qquad
m_1:=\lceil w^2n\rceil.
\]
Then $m_0\to\infty$ and $m_0=o(m_1)$, and again
\begin{equation}\label{eq:A6}
\mathbb E\,r(\sigma_0,\hat\sigma)
\le
\frac{m_0}{n}
+e^{t^\star s_n}\sum_{m>m_0}\widetilde P_m.
\end{equation}

For $m_0<m\le m_1$, using \eqref{eq:A4},
\[
\widetilde P_m
\le
\left(\frac{2en}{m_0}\right)^m
\exp\!\left\{-\frac{m(n-m_1)}{2}I\right\}
\le
\exp\!\left\{-\frac14\,w\,nmI\right\}
=:Q_m
\]
for all large $n$. Since $wm_0\to\infty$, the geometric sum
$\sum_{m_0<m\le m_1}Q_m=o(m_0/n)$.
For $m_1<m\le n/2$,
\[
\widetilde P_m
\le
\left(\frac{2en}{m_1}\right)^m
\exp\!\left\{-\frac{nm}{4}I\right\}
\le
\exp\!\left\{-\frac{nmI}{8}\right\}
=:Q'_m,
\]
and $\sum_{m>m_1}Q'_m=o(m_0/n)$.
Plugging these bounds into \eqref{eq:A6} yields
\[
\mathbb E\,r(\sigma_0,\hat\sigma)
\le
(1+o(1))\frac{m_0}{n}\,e^{t^\star s_n}
=
\exp\!\left\{-\Bigl(1+o(1)\Bigr)\frac{nI}{2}+t^\star s_n\right\}.
\]

This completes the proof.
\end{proof}
\subsection{Combinatorial lower bounds for split and merge counts}
The next lemma is  proven in \cite[Appendix~A.3]{Zhang2016Minimax} bounding the split and merge counts by a small-\(m\)/large-\(m\) analysis:
 \begin{lemma}[Lemma A.1 in \cite{Zhang2016Minimax}]\label{lem:split-merge-lower-bound}
Let $\sigma_0,\sigma\in \Sigma_\beta$ with $K\ge 3$, $1\le \beta < \sqrt{5/3}$, and let
\[
m:=d(\sigma,\sigma_0)=\min_{\pi\in S_K} d_H(\sigma,\pi\circ \sigma_0).
\]
Recall
\[
\alpha(\sigma;\sigma_0)
:=\bigl|\{i<j:\sigma_0(i)=\sigma_0(j),\ \sigma(i)\neq \sigma(j)\}\bigr|,
\]
\[
\gamma(\sigma;\sigma_0)
:=\bigl|\{i<j:\sigma_0(i)\neq \sigma_0(j),\ \sigma(i)=\sigma(j)\}\bigr|.
\]
Then:

\begin{enumerate}
\item If $m\le \frac{n}{2\beta K}$, then
\[
\alpha(\sigma;\sigma_0)\wedge \gamma(\sigma;\sigma_0)
\;\ge\;
\frac{nm}{\beta K}-m^2.
\]

\item There exists a constant
\[
c_\beta:=\frac{5-3\beta^2}{36\beta}>0
\]
such that whenever $m>\frac{n}{2\beta K}$,
\[
\alpha(\sigma;\sigma_0)\wedge \gamma(\sigma;\sigma_0)
\;\ge\;
c_\beta\,\frac{nm}{K}.
\]
\end{enumerate}
\end{lemma}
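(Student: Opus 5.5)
We prove Lemma~\ref{lem:split-merge-lower-bound} by following the small-$m$ / large-$m$ split of \cite[Appendix~A.3]{Zhang2016Minimax}.

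First we fix a permutation $\pi$ attaining $d_H(\sigma,\pi\circ\sigma_0)=m$. Since $\alpha$ and $\gamma$ depend only on the partitions, we may replace $\sigma_0$ by $\pi\circ\sigma_0$ and assume $d_H(\sigma,\sigma_0)=m$. Write $n_{kl}:=|\{i:\sigma_0(i)=k,\sigma(i)=l\}|$ for the confusion matrix. Then $\sum_{k\neq l}n_{kl}=m$, and the row and column sums lie in $[n/(\beta K),\beta n/K]$. The two pair counts are
\[
\alpha=\sum_k\sum_{l<l'}n_{kl}n_{kl'},\qquad \gamma=\sum_l\sum_{k<k'}n_{kl}n_{k'l}.
\]

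\textbf{Case 1: $m\le n/(2\beta K)$.} Each misclassified vertex $i$ with $\sigma_0(i)=k\neq l=\sigma(i)$ forms a split pair with every $j$ having $\sigma_0(j)=k$ and $\sigma(j)=k$. It also forms a merge pair with every $j$ having $\sigma_0(j)=l$ and $\sigma(j)=l$. The number of such $j$ is $n_{kk}\ge n/(\beta K)-m$ in the first case and $n_{ll}\ge n/(\beta K)-m$ in the second, because at most $m$ vertices of any class are misclassified.

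Each such pair contains exactly one misclassified vertex, since its partner $j$ is correctly classified. So distinct misclassified vertices produce distinct pairs, and no pair is counted twice. Summing over the $m$ misclassified vertices gives
\[
\alpha\ge m\bigl(n/(\beta K)-m\bigr),\qquad \gamma\ge m\bigl(n/(\beta K)-m\bigr),
\]
which is item 1.

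\textbf{Case 2: $m> n/(2\beta K)$.} Here the overlap $n_{kk}$ could be small, so we must use the optimality of $\pi$ together with balance. For $\gamma$, fix a column $l$ with column sum $c_l=\sum_k n_{kl}$. Then
\[
\sum_{k<k'}n_{kl}n_{k'l}=\tfrac12\Bigl(c_l^2-\sum_k n_{kl}^2\Bigr)\ge \tfrac12\bigl(c_l-\max_k n_{kl}\bigr)\,c_l ,
\]
and the same bound holds for $\alpha$ with rows. Since $c_l\ge n/(\beta K)$, this gives $\gamma\ge \frac{n}{2\beta K}\sum_l (c_l-\max_k n_{kl})$, and it remains to show
\[
\sum_l\bigl(c_l-\max_k n_{kl}\bigr)\gtrsim m.
\]
Define a map $l\mapsto k(l)$ sending each column to the row achieving its maximum. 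If this map is injective, it is a permutation, and optimality of $\pi$ gives
\[
\sum_l\bigl(c_l-n_{k(l)l}\bigr)\ge n-\sum_l n_{ll}=m .
\]

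\textbf{Main obstacle: the non-injective case.} Here two columns $l\neq l'$ share the same maximizing row $k$. Then $n_{kl}+n_{kl'}\le \beta n/K$, so one of these two columns has maximum at most $\beta n/(2K)$. That column loses at least $c_l-\beta n/(2K)\ge n/(\beta K)-\beta n/(2K)$. This quantity is positive exactly when $\beta^2<2$, and the condition $\beta<\sqrt{5/3}$ together with the constant $(5-3\beta^2)/(36\beta)$ suggests the sharper accounting of Zhang--Zhou. In that accounting each collision costs order $n/K$ mass, which, since $m\le n$, still yields a total of at least $c\,m$.

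Carrying out this case analysis carefully, with the interpolation between the injective and colliding columns, gives $\alpha\wedge\gamma\ge c_\beta nm/K$. This collision bookkeeping is the delicate step, and for the exact constant I would defer to \cite[Appendix~A.3]{Zhang2016Minimax}.
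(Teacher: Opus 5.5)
Your Case 1 is correct and complete. Each misclassified vertex forms split pairs with the at least $\frac{n}{\beta K}-m$ correctly classified members of its true class, and merge pairs with those of its assigned class. These pairs are all distinct. The paper does not prove this lemma itself; it imports it from Zhang--Zhou. So the only issue is that your Case 2 is not a proof as written.

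The reduction $\gamma\ge\frac{n}{2\beta K}f$ with $f:=\sum_l(c_l-\max_k n_{kl})$ is fine, and so is the injective case. The non-injective case carries all the content, however, and your justification there does not work. A loss of order $n/K$ per collision only gives $f\gtrsim t\,n/K$, where $t$ is the number of collisions. The bound $m\le n$ does not turn this into $f\gtrsim m$: that would need $t\gtrsim K$ when $m$ is of order $n$. What is missing is an \emph{upper} bound on $m$ in terms of $f$ and $t$, i.e.\ a way to use the optimality in the definition of $m$ when $l\mapsto k(l)$ is not a permutation.

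Your own setup supplies this bound. Let $P_k:=\{l:k(l)=k\}$, $j_k:=|P_k|$, $t:=\sum_k(j_k-1)_+$, and let $r_k$ denote the row sums.

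First, a lower bound on $f$. Summing over a whole fiber, for $j_k\ge 2$,
\[
\sum_{l\in P_k}(c_l-n_{kl})\ \ge\ j_k\tfrac{n}{\beta K}-r_k\ \ge\ \bigl(\tfrac{j_k}{\beta}-\beta\bigr)\tfrac{n}{K}\ \ge\ (j_k-1)\bigl(\tfrac{2}{\beta}-\beta\bigr)\tfrac{n}{K},
\]
where the last step uses $\beta\ge 1$. Hence $f\ge t(\frac{2}{\beta}-\beta)\frac{n}{K}$. Your pairwise bound, applied to all but one column in each fiber, gives the same up to a factor $2$.

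Second, an upper bound on $m$. Send each row $k$ with $P_k\neq\varnothing$ to one representative column in $P_k$. Complete this to a bijection arbitrarily between the $t$ remaining rows and the $t$ remaining columns. Column by column, the Hamming cost of this permutation is at most $f$ plus the sizes of the $t$ leftover columns. Therefore
\[
m\ \le\ f+t\beta\tfrac{n}{K}\ \le\ \tfrac{2}{2-\beta^2}\,f.
\]

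Combining, $\gamma\ge\frac{2-\beta^2}{4\beta}\frac{nm}{K}$. The same holds for $\alpha$ after transposing the confusion matrix, since both labelings are balanced and $m$ is transpose-invariant. Because $9(2-\beta^2)\ge 5-3\beta^2$ whenever $\beta^2\le 13/6$, this yields item 2 with the stated $c_\beta$. In fact it holds for every $m$ and every $\beta<\sqrt2$.

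With this step added, your route is a complete, self-contained proof. Without it, item 2 rests entirely on the citation.
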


\subsection{Orbit counting and slack-factor propagation}
\begin{lemma}\label{lem:orbit_counting}
For \(m\in\{1,\dots,n-1\}\), let
\[
\mathcal G_m:=\{\Gamma(\sigma):\sigma\in\Sigma_\beta,\ d(\Gamma(\sigma),\sigma_0)=m\},
\qquad
\Gamma(\sigma):=\{\pi\circ \sigma:\pi\in S_K\}.
\]
Then
\begin{equation}\label{eq:class_count}
|\mathcal G_m|\le \min\Bigl\{\Bigl(\frac{enK}{m}\Bigr)^m,\ K^n\Bigr\}.
\end{equation}
Moreover, suppose that for some numbers \(q_m\ge 0\),
\[
\Pr\!\bigl(T_A(\sigma_\Gamma)\ge T_A(\sigma_0)-s_n\bigr)
   \le e^{t^\star s_n} q_m
\]
for every \(m\) and every representative \(\sigma_\Gamma\in\Gamma\) satisfying
\(d(\sigma_\Gamma,\sigma_0)=m\). Then
\[
\mathbb E\, r(\sigma_0,\hat\sigma)
   \le \frac{e^{t^\star s_n}}{n}\sum_{m=1}^{n-1} m\, |\mathcal G_m|\, q_m .
\]
\end{lemma}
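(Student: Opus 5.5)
The statement to prove is Lemma~\ref{lem:orbit_counting}. It has two parts: the class-count bound \eqref{eq:class_count}, and the slack-propagated layer sum.

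\textbf{Counting orbits.} Each orbit $\Gamma\in\mathcal G_m$ has a representative $\sigma_\Gamma$ with $d_H(\sigma_\Gamma,\sigma_0)=m$. We may take $d_H$ rather than $d$ here because, as in \eqref{def:Gamma}, $d(\Gamma,\sigma_0)$ is a minimum over $\pi\in S_K$ and the orbit is closed under relabeling. The map $\Gamma\mapsto\sigma_\Gamma$, with a fixed choice of representative, is injective. Its image consists of labelings obtained from $\sigma_0$ by choosing $m$ coordinates and reassigning each of them one of the at most $K$ labels. Hence
\[
|\mathcal G_m|\le\binom nm K^m\le\Bigl(\frac{en}{m}\Bigr)^mK^m=\Bigl(\frac{enK}{m}\Bigr)^m .
\]
The second bound $|\mathcal G_m|\le K^n$ is trivial, since the orbits are disjoint nonempty subsets of $[K]^n$.

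\textbf{Risk bound.} I would follow \eqref{eq:risk-layer-decomp}, using the estimator's slack hypothesis \eqref{eq:score-slack-assumption} implicitly, as in the ambient proof of Lemma~\ref{lem:risk-with-score-slack}. Note that $r(\sigma_0,\hat\sigma)=d(\hat\sigma,\sigma_0)/n$, since $d$ minimizes over permutations on the same side. This gives
\[
\bbE\,r\le\frac1n\sum_{m=1}^{n-1}m\,\bbP(E_m(s_n)).
\]
The event $E_m(s_n)$ is the union, over orbits $\Gamma\in\mathcal G_m$, of the events that some $\sigma\in\Gamma$ satisfies $T_A(\sigma)\ge T_A(\sigma_0)-s_n$. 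Two observations reduce each orbit to a single event:
\begin{itemize}
\item any $\sigma\in\Sigma_\beta$ with $d(\sigma,\sigma_0)=m$ has orbit $\Gamma(\sigma)$ in $\mathcal G_m$, because $d$ is constant on orbits;
\item $T_A$ is constant on orbits, so within $\Gamma$ the event reduces to $\{T_A(\sigma_\Gamma)\ge T_A(\sigma_0)-s_n\}$.
\end{itemize}
A union bound over $\Gamma\in\mathcal G_m$ together with the hypothesis then gives $\bbP(E_m(s_n))\le|\mathcal G_m|\,e^{t^\star s_n}q_m$. Substituting this into the layer sum yields the claim.

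The only delicate point is the first reduction, that $d(\cdot,\sigma_0)$ is invariant under $\sigma\mapsto\pi\circ\sigma$. To check it, note that $d_H(\pi\circ\sigma,\pi'\circ\sigma_0)=d_H(\sigma,\pi^{-1}\pi'\circ\sigma_0)$, so minimizing over $\pi'$ gives the same value for $\pi\circ\sigma$ as for $\sigma$. Everything else is routine.
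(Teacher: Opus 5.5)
Your proposal is correct and follows the paper's proof essentially step for step. It uses the same counting argument: a representative with $d_H(\sigma_\Gamma,\sigma_0)=m$, giving $\binom nm K^m\le (enK/m)^m$. It then uses the same layer decomposition via $\{d(\hat\sigma,\sigma_0)=m\}\subseteq E_m(s_n)$, which, as you note, tacitly relies on the score-slack hypothesis on $\hat\sigma$, followed by a union bound over orbits using that $T_A$ is constant on orbits. Your explicit checks that $d(\cdot,\sigma_0)$ is invariant under relabeling and that distinct orbits give distinct representatives simply fill in details the paper leaves implicit.
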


\begin{proof}
Any orbit \(\Gamma\in \mathcal G_m\) admits a representative \(\sigma\) with
\(d_H(\sigma,\sigma_0)=m\). Such a representative is specified by choosing the
\(m\) vertices on which it differs from \(\sigma_0\), and then assigning to each
chosen vertex one of at most \(K\) labels. Therefore
\[
|\mathcal  G_m|\le \binom{n}{m}K^m\le \Bigl(\frac{enK}{m}\Bigr)^m.
\]
The trivial bound \(|\mathcal  G_m|\le K^n\) gives the stated minimum.

Because \(\sigma_0\in\Sigma_\beta\), we have
\[
\{d(\hat\sigma,\sigma_0)=m\}\subseteq E_m(s_n),
\]
and so
\[
\mathbb E r(\sigma_0,\hat\sigma)
\le \frac1n \sum_{m=1}^{n-1} m\, \Pr(E_m(s_n)).
\]
Now partition \(\{\sigma\in\Sigma_\beta:d(\sigma,\sigma_0)=m\}\) into the
orbits \(\mathcal  G_m\). Since \(T_A\) is constant on each orbit,
\[
\Pr(E_m(s_n))
\le \sum_{\Gamma\in G_m}
   \Pr\!\bigl(T_A(\sigma_\Gamma)\ge T_A(\sigma_0)-s_n\bigr)
\le |\mathcal G_m| e^{t^\star s_n} q_m.
\]
Substituting this into the previous display proves the claim.
\end{proof}


\section*{Acknowledgments}
 The work of O. Klopp was funded by the CY Initiative Grant Investissements d’Avenir
Agence Nationale de Recherche-16-Initiatives d’Excellence-0008 and Labex MME-DII Grant
ANR11-LBX-0023-01.

\newpage
\bibliographystyle{plain}  
\bibliography{biblio}  

\end{document}